\numberwithin{equation}{section}
\newtheorem{definition}{Definition}[section]
\newtheorem{lemma}[definition]{Lemma}
\newtheorem{theorem}[definition]{Theorem}
\newtheorem{proposition}[definition]{Proposition}
\newtheorem{corollary}[definition]{Corollary}
\newtheorem{example}[definition]{Example}
\newtheorem{remark}[definition]{Remark}
\newenvironment{assumption}[1][Assumption]{\itshape\begin{trivlist}
\item[\hskip \labelsep {\bfseries #1}]}{\end{trivlist}}
\title{Local H\"older regularity for set-indexed processes}
\date{\today}
\author{Erick Herbin}
\address{Ecole Centrale Paris\\
MAS Laboratory and INRIA Regularity team\\ 
Grande Voie des Vignes, 92295~Chatenay-Malabry, France} \email[Corresponding author]{erick.herbin@ecp.fr}
\author{Alexandre Richard}
\email{alexandre.richard@inria.fr}
\begin{document}

\newcommand{\fin}{$\Box$\\}
\newcommand{\C}{\mathbf{C}}
\newcommand{\R}{\mathbf{R}}
\newcommand{\Q}{\mathbf{Q}}
\newcommand{\N}{\mathbf{N}}
\newcommand{\PP}{\mathbf{P}}
\newcommand{\EE}{\mathbf{E}}
\newcommand{\Acal}{\mathcal{A}}
\newcommand{\Ccal}{\mathcal{C}}
\newcommand{\ds}{\displaystyle}
\newcommand{\saut}[1]{\hfill\\[#1]}
\newcommand{\vsp}{\vspace{.15cm}}
\newcommand{\difrac}{\displaystyle \frac}
\newcommand{\dist}{\textrm{dist}}
\newcommand{\mbf}{\textbf}

\newcommand{\alphar}{\texttt{\large $\boldsymbol{\alpha}$}}
\newcommand{\qA}{q_{\underline{\mathcal{A}}}}
\newcommand{\dA}{d_{\mathcal{A}}}
\newcommand{\HA}{$\mathcal{H}_{\underline{\mathcal{A}}}$}
\newcommand{\sifbm}{\mathbf{B}}

\maketitle

\begin{abstract}
In this paper, we study the H\"older regularity of set-indexed stochastic processes defined in the framework of Ivanoff-Merzbach. The first key result is a Kolmogorov-like H\"older-continuity Theorem, whose novelty is illustrated on an example which could not have been treated with anterior tools. Increments for set-indexed processes are usually not simply written as $X_U-X_V$, hence we considered different notions of H\"older-continuity.
Then, the localization of these properties leads to various definitions of H\"older exponents, which we compare to one another. 

In the case of Gaussian processes, almost sure values are proved for these exponents, uniformly along the sample paths.
As an application, the local regularity of the set-indexed fractional Brownian motion is proved to be equal to the Hurst parameter uniformly, with probability one.
\end{abstract}

{\sl AMS classification\/}: 60\,G\,60, 60\,G\,17, 60\,G\,15, 60\,G\,22, 60\,J\,65.

{\sl Key words\/}: fractional Brownian motion, Gaussian processes, H\"older
exponents, Kolmogorov criterion, local regularity, random fields, sample path properties, multiparameter and set-indexed processes.

\section{Introduction}

Sample path properties of stochastic processes have been deeply studied for a long time, starting with the works of Kolmogorov, L\'evy and others on the modulus of continuity and laws of the iterated logarithm of the Brownian motion. Since the late 1960s, these results were extended to general Gaussian processes, while a finer study of the local properties of these sample paths was carried out (we refer to Berman \cite{Berman(1970),Berman(1972)}, Dudley \cite{dudley}, Orey and Pruit \cite{Orey.Pruitt(1973)}, Orey and Taylor \cite{Orey.Taylor(1974)} and Strassen \cite{Strassen(1964)}, for the early study of Gaussian paths and their rare events). 
Among the large literature dealing with fine analysis of regularity, H\"older exponents continue to be widely used as a local measure of oscillations (see  \cite{BH2, Baraka.Mountford.ea(2009), lawler2011, LLS06, tudorxiao07} for examples of recent works in this area).
Two different definitions, called local and pointwise H\"older exponents, are usually considered for a stochastic process $\{X_t;\;t\in\R_+\}$, depending whether the increment $X_t-X_s$ is compared with a power $|t-s|^{\alpha}$ or $\rho^{\alpha}$ inside a ball $B(t_0,\rho)$ when $\rho\rightarrow 0$.
As an example, with probability one, the local regularity of fractional Brownian motion $\{B^H;\;t\in\R_+\}$ is constant along the path: the pointwise and local H\"older exponents at any $t\in\R_+$ are equal to the self-similarity index $H\in (0,1)$ (e.g. see \cite{ehjlv}).

This field of research is also very active in the multiparameter context and a non-exhaustive list of authors and recent works in this area includes Ayache \cite{Ayache.Shieh.ea(2011)}, Dalang \cite{Dalang.Nualart.ea(2011)}, Khoshnevisan \cite{Dalang.Nualart.ea(2011), KX05}, L\'evy-V\'ehel \cite{ehjlv}, Xiao \cite{mwx,xiao2009,xiao2010}. 
As an extension to the multiparameter one, the set-indexed context appeared to be the natural framework to describe invariance principles studying convergence of empirical processes (e.g. see \cite{Bierme,Ossiander.Pyke(1985)}). 
The understanding of set-indexed processes and particularly their regularity is a more complex issue than on points of $\R^N$. The simple continuity property is closely related to the nature of the indexing collection. As an example, Brownian motion indexed by the lower layers of $[0,1]^2$ (i.e. the subsets $A\subseteq [0,1]^2$ such that $[0,t]\subseteq A$ for all $t\in A$) is discontinuous with probability one \cite{adlerTaylor,Ivanoff}. 
As a matter of fact, necessary and sufficient conditions for the sample path continuity were investigated, starting with Dudley \cite{dudley} who introduced a sufficient condition on the metric entropy of the indexing set, followed by Fernique \cite{Fernique} who gave a necessary conditions in the specific case of stationary processes on $\R^N$. Talagrand gave a definitive answer in terms of majorizing measures \cite{Talagrand} (see \cite{adlerTaylor} or \cite{davar} for a comprehensive survey and also \cite{alexander84,alexander} for a LIL and L\'evy's continuity moduli for set-indexed Brownian motion). We must warn the reader though, that even if we shall use metric entropy concepts, our goal here will not be to establish sufficient conditions for H\"older continuity. Instead, it is to provide a general framework, rich enough to study different types of H\"older continuity and processes whose regularity might change at each point of the sample paths. Thus, our point of view is to formulate simple hypotheses in terms of the law of the increments of the processes we consider, rather than more refined assumptions as in the previously mentioned papers.

A formal set-indexed setting has been introduced by Ivanoff and Merzbach in order to study standard properties of stochastic processes, such as martingale and Markov properties (see \cite{Ivanoff}). In this framework, an {\em indexing collection} $\Acal$ is a collection of subsets of a measure space $(\mathcal{T},m)$, which is assumed to satisfy certain properties such as stability by intersection of its elements. 
Section \ref{sec:Ho-cont} of the present paper uses these properties, with an entropic condition (\HA) on $\Acal$, to derive a Kolmogorov-like criterion for H\"older-continuity of a set-indexed process.
The collection of sets $\Acal$ is endowed with a metric $\dA$ and a nested sequence $\underline{\Acal}=(\Acal_n)_{n\in\N}$ of finite subcollections of $\Acal$ such that each element of $\Acal$ can be approximated as the decreasing limit (for the inclusion) of its projections on the $\Acal_n$'s.
We consider Assumption (\HA) on $\underline{\Acal}$ and $\dA$ which imposes that: 1) the distance from any $U\in\Acal$ to $\Acal_n$ can be related to the cardinal $k_n=\#\Acal_n$, roughly by $\dA(U,\Acal_n) = O(k_{n}^{-1/\qA})$, where $\qA$ is called the {\em discretization exponent} of $(\Acal_n)_{n\in\N}$; 
and 2) a minimality condition on the class $(\Acal_n)_{n\in \N}$ that turns to be important when processes are not Gaussian. This condition is discussed and compared to other entropic conditions, and the example of a stable-like process to which no previous result seems to be easily applicable, comes to illustrate our point.

Alternatively, H\"older-continuity can be based on the usual definition for increments of set-indexed processes. 
Instead of quantities $X_U - X_V$, the increments of a set-indexed process $\{X_U;\;U\in\Acal\}$ are defined on the class $\Ccal$ of sets $C=U_0\setminus\bigcup_{1\leq k\leq n}U_k$ where $U_0,U_1,\dots,U_n\in\Acal$ by the {\em inclusion-exclusion formula} 
\begin{align*}
\Delta X_C = X_{U_0} - \sum_{k=1}^n \sum_{j_1<\dots<j_k} (-1)^{k-1} X_{U_0 \cap U_{j_1} \cap \dots \cap U_{j_k}}.
\end{align*}
This definition extends the notion of rectangular increments for multiparameter processes. For instance, quantities like $\Delta_{[\mathbf{u},\mathbf{v}]} \mathbf{B} = \mathbf{B}_{\mathbf{v}} - \mathbf{B}_{(u_1,v_2)} - \mathbf{B}_{(v_1,u_2)} + \mathbf{B}_{\mathbf{u}}$, where $\mathbf{u}\preccurlyeq \mathbf{v}\in \R_+^2$ and $\mathbf{B}$ is the Brownian sheet, were proved to be useful to derive geometric sample path properties of the process (see e.g. the works of Dalang and Walsh \cite{DalangWalsh}). Let us notice that some processes can satisfy an increment stationarity property with respect to these increments while they do not for quantities $X_U-X_V$. Moreover, this inclusion-exclusion principle is very useful when it comes to martingale and Markov properties.
According to this definition, another way to express the H\"older-continuity of $X$ is $|\Delta X_C|\leq L\ m(C)^{\gamma}$, for $C\in\Ccal$. This question is clarified in Section \ref{sec:Kolm}.

\medskip

The purpose of H\"older exponents is the localization of the \emph{exact} H\"older-continuity concept. Following the previous discussion, the first definition for local and pointwise H\"older exponents is based on the comparison between $|X_U-X_V|$ and a power $\dA(U,V)^{\alpha}$ or $\rho^{\alpha}$ in a ball $B_{\dA}(U_0,\rho)$ around $U_0\in\Acal$ when $\rho\rightarrow 0$.
Another definition compares $|\Delta X_C|$ for $C=U\setminus\bigcup_{1\leq k\leq n}V_k$ in $\Ccal$ with $\dA(U,U_0)<\rho$ and $\dA(U_0,V_k)<\rho$ for each $k$, to a power $m(C)^{\alpha}$ when $\rho\rightarrow 0$.
These two kinds of exponents, precisely defined in Section \ref{sec:HoExp}, provide a fine knowledge of the local behaviour of the sample paths. In Section \ref{sec:Hopc}, we define the H\"older exponent of pointwise continuity of a process $X$ by comparing $\Delta X_{C_n(t)}$ with a power $m(C_n(t))^{\alpha}$ when $n\rightarrow\infty$, where $(C_n(t))_{n\in\N}$ is a decreasing sequence of elements in $\mathcal{C}$ which converges to $t\in\mathcal{T}$.
This new exponent is related to the concept of {\em pointwise continuity}, which has been introduced in the multiparameter setting in \cite{AMSW} and in the set-indexed setting in \cite{ehem3}. This is a weak form of continuity, since without any supplementary condition on the indexing collection, the set-indexed Brownian motion satisfies this property, even on lower layers where it is not continuous.

Then in Section \ref{sec:flow}, the different H\"older exponents are linked to the H\"older regularity of projections of the set-indexed process on increasing paths.

\medskip

In the Gaussian case, we prove in Section \ref{sec:as-exp} that the different aforementioned H\"older exponents admit almost sure values. 
Assumption (\HA) turns to be very well-adapted to extend this result from the multiparameter to the set-indexed setting.
Moreover, these almost sure values can be obtained uniformly on $\Acal$ for the local exponent when (\HA) holds.
However, only an a.s. lower bound can be obtained for the pointwise exponent (even for multiparameter processes). Nevertheless, we proved that an equality holds for the set-indexed fractional Brownian motion (defined in \cite{ehem}) in Section~\ref{sec:appl}, thus improving on a result in the multiparameter case \cite{ehjlv}. As this requires some specific extra work, we believe that the uniform almost sure result might not be true for the pointwise exponent of any Gaussian process. Besides, we exhibit in a subsequent article \cite{AR13} a process whose H\"older continuity changes along the sample paths, thus providing a set-indexed process with a ``non-trivial'' behaviour.

\section{H\"older continuity of a set-indexed process}\label{sec:Ho-cont}

\subsection{Indexing collection for set-indexed processes}

A general framework was introduced by Ivanoff and Merzbach to study martingale and Markov properties of set-indexed processes (we refer to \cite{Ivanoff} for the details of the theory). The structure of these indexing collections allowed the study of the set-indexed extension of fractional Brownian motion \cite{ehem}, its increment stationarity property \cite{ehem2} and a complete characterization of the class of set-indexed L\'evy processes \cite{ehem3}.

Let $\mathcal{T}$  be a  locally compact complete separable metric and measure space, with metric $d$ and Radon measure $m$ defined on the Borel sets of $\mathcal{T}$.
All stochastic processes will be indexed by a class $\mathcal{A}$ of compact connected subsets of $\mathcal{T}$.

In the whole paper, the class of finite unions of sets in any collection $\mathcal{D}$ will be denoted by $\mathcal{D}(u)$.  In the terminology of
\cite{Ivanoff}, we assume that the {\it indexing collection} $\mathcal{A}$ satisfies stability and separability conditions in the sense of Ivanoff and Merzbach:

\begin{definition}[adapted from \cite{Ivanoff}]\label{basic}
A nonempty class $\mathcal{A}$ of compact, connected subsets of $\mathcal{T}$ is 
called an {\em indexing collection}\index{indexing collection} 
if it satisfies the following:
\begin{enumerate}
\item $\emptyset \in \mathcal{A}$, $\mathcal{A}$ is closed under arbitrary intersections and if 
$A,B\in \mathcal{A}$ are nonempty, then $A\cap B$ is nonempty. 

\item {\em Separability from above}: 
There exists an increasing sequence of finite subclasses 
$\mathcal{A} _n=\{\emptyset,A_1^n,...,A_{k_n-1}^n\}$ ($n\in\N, k_n\geq 1$) of $\mathcal{A}$ closed under intersections and a
sequence of functions $g_n: \mathcal{A}\rightarrow \mathcal{A}_n$ defined by
\begin{equation*}
\forall U\in\Acal,\quad
g_n(U) = \bigcap_{V\in \Acal_n \atop V^\circ \supseteq U} V
\end{equation*}
and such that for each $U\in\mathcal{A}$, $U=\displaystyle\bigcap_{n\in\N} g_n(U)~ $.
\end{enumerate}
 (Note: `$\overline{(\cdot)}$' and`$(\cdot )^{\circ}$' denote
respectively the closure and the interior of a set.)

\end{definition}

Standard examples of indexing collections can be mentioned, such as rectangles $[0,t]$ of $\R^N$, arcs of the circle $\mathbbm{S}^2$ or lower layers. Some of them are detailed in Examples~\ref{ex:index} and \ref{ex:layers} below.

\medskip

\noindent{\bf Distances on sets.}
In order to study the H\"older-continuity of set-indexed processes, we consider a distance on the indexing collection. Along this paper, we may sometimes specify the distance on $\Acal$ that we are using. 
Among them, the following distances are of special interest: 
\begin{itemize}
\item The classical Hausdorff metric $d_H$ defined on 
$\mathcal{K}\setminus\emptyset$, the nonempty compact subsets of $\mathcal{T}$,
by
\begin{equation*}
\forall U,V\in\mathcal{K}\setminus\emptyset;\quad
d_H(U,V)=\inf\left\{\epsilon>0:U\subseteq V^{\epsilon}\textrm{ and }
V\subseteq U^{\epsilon} \right\},
\end{equation*} 
where $U^{\epsilon} = \{x\in \mathcal{T}: d(x,U) \leq \epsilon\}$; 
\item and the pseudo-distance $d_{m}$ defined by 
\begin{equation*}
\forall U, V\in\mathcal{A};\quad
d_{m}(U,V)=m\left(U\bigtriangleup V\right),
\end{equation*}
where $m$ is the measure on $\mathcal{T}$ and $\bigtriangleup$ denotes the symetric difference of sets.
\end{itemize}


\begin{remark}
In the case of $\mathcal{A}=\left\{ [0,t];\;t\in\R^N_{+} \right\}$,
$(s,t)\mapsto d_{m}([0,s],[0,t])$ induces a distance on 
$\R^N_{+}$. If $m$ is the Lebesgue measure of $\R^N$, this distance is equivalent to the euclidean distance on compacts of $\R^N$ away from the axes (\cite{hausdorff}).
\end{remark}



\medskip

\noindent {\bf Controlling the size of indexing collections.}
We recall that a metric space $(\mathcal{T},d)$ is \emph{totally bounded} if for any $\epsilon>0$, $\mathcal{T}$ can be covered by a finite number of balls of radius smaller than $\epsilon$. The minimal number of such balls is called the \emph{metric entropy} and is denoted by $N(\mathcal{T},d,\epsilon)$. The sample path properties of stochastic processes indexed on general sets is closely related to the metric entropy they induce on the indexing set (cf \cite{adlerTaylor,LedouxTalagrand}). Without further assumptions than those of Definition \ref{basic}, $\Acal$ is not totally bounded.

Notice that the sequence $(k_n)_{n\in \N} = (\# \Acal_n)_{n\in \N}$ is an increasing sequence that tends to $\infty$, as $n\rightarrow\infty$. This property comes from condition \emph{(2)} in Definition \ref{basic}. 
We will say that $(\Acal_n,k_n)_{n\in \N}$ is \emph{admissible} if:
\begin{equation}\label{admissibility}
\forall \delta>0, \quad \sum_{n=1}^{\infty} \frac{k_{n+1}}{k_n^{1+\delta}} < \infty .
\end{equation}
This should not appear as a restriction, since if $(k_n)_{n\in \N}$ was going to $\infty$ too slowly, it would suffice to extract a subsequence ; and in the opposite situation, the gap between one scale to the other is too large and can then be filled with additional subclasses.

\begin{assumption} {\bf (\HA).}
\label{AssumptDiscr2}
Let $\dA$ be a (pseudo-)distance on $\mathcal{A}$. Let us suppose that for $\underline{\Acal}=(\mathcal{A}_n)_{n\in \N}$, there exist positive real numbers $\qA$ and $M_1$ such that:
\begin{enumerate}
\item For all $n\in\N$,
\begin{equation}\label{eq:assump1}
\sup_{U\in \Acal_n} \dA(U,g_n(U)) \leq M_1\ k_{n}^{-1/\qA}, \tag{H1}
\end{equation}
\item
the collection $(\Acal_n)_{n\in\N}$ is minimal in the sense that: setting for all $n\in\N$ and all $U\in\Acal_n$,
$$\mathcal{V}_n(U) = \{V\in \Acal_n: {V\supsetneq U}, \dA(U,V)\leq 3 M_1 k_n^{-1/\qA}\},$$ 
the sequence $(N_n)_{n\geq 1}$ defined by $N_n = \max_{U\in\Acal_n} \# \mathcal{V}_n(U)$ for all $n\geq 1$ satisfies
\begin{equation}\label{eq:assump2}
\forall\delta>0,\quad
\sum_{n=1}^{\infty} k_n^{-\delta} \ N_n  <\infty. \tag{H2}
\end{equation}
\end{enumerate}
\end{assumption}

The real $\qA$ is not unique and it depends {\it a priori} on the distance $\dA$ and the sub-semilattices $\underline{\Acal}=(\Acal_n)_{n\in\N}$. Such a real $\qA$ is called {\em discretization exponent} of $(\mathcal{A}_n)_{n\in\N}$. Note that if $N_n$ can be bounded independently of $n$, then the last assumption is satisfied by admissibility of $k_n$.

\begin{remark}
\begin{itemize}
\item Without loss of generality, the distance $\dA$ can be normalized such that $M_1=1$.
\item The summability condition \eqref{eq:assump2} of Assumption (\HA) is close to the notion of entropy with inclusion developped by Dudley \cite{DudleyInclusion} in the context of empirical processes. On the contrary to the present work, \cite{DudleyInclusion} focused exclusively on the sample path boundedness and continuity in the Brownian case.
\item In \cite{XiaoHausdorff}, the same idea had already appeared in a slightly different form, reinforcing the legitimacy of this additional hypothesis for processes with heavy tails.
\item Assumption (\HA) implies the total boundedness of $(\Acal, \dA)$. Indeed, (\ref{eq:assump1}) means that for any $n\in \N^*$,
$\mathcal{A}_n$ forms a $k_n^{-1/\qA}$-net, and thus $(\mathcal{A}, \dA)$ is totally bounded.
\end{itemize}
\end{remark}

\medskip

\noindent {\bf Examples.} We present three classes of examples: the first which corresponds to the Euclidean setting and affiliated simple indexing collections, the second one which fails to satisfy Assumption (\HA), and the last which is significantly different from the Euclidean one since it has a non-integer $\qA$.

\begin{example}\label{ex:index}
In the case of $\mathcal{A}=\left\{ [0,t];\;t\in [0,1]\subset\R^N_{+} \right\}$,
the subclasses $\mathcal{A}_n$ ($n\in\N$) can be chosen as
\[ 
\left\{ [0,2^{-n}.(l_1,\dots,l_N)];\;0\leq l_1,\dots,l_N\leq 2^n \right\}.
\]
\noindent Let $U$ be a set in $\Acal$. The distance $d_\lambda$ (induced by the Lebesgue measure $\lambda$) between $U$ and $g_n(U)$ is the volume difference between the two sets. For $n$ large, it is bounded as follows:
\begin{equation*}
\sup_{U\in\Acal} d_{\lambda}(U,g_n(U)) = \sup_{U\in\Acal}  \lambda(g_n(U) \setminus U) = N. 2^{-n} + o(2^{-n}).
\end{equation*}
Since $k_n = (2^n + 1)^N$, this leads to $d_{\lambda}(U,g_n(U)) = O(k_{n}^{-1/\qA})$, where $\qA = N$, and the other condition of Assumption (\HA) are satisfied.
\end{example}

The following example shows that the collection of {\em lower layers} of $\R^N$ does not satisfy Assumption (\HA). We will see later that this result is not surprising in the view of Theorem \ref{kolmth}, since Brownian motion indexed by lower layers of $[0,1]^2$ does not have a continuous modification, as can be seen for instance in \cite{Ivanoff}.

\begin{example}\label{ex:layers}
Let $\mathcal{A}$ be the collection of lower layers of $[0,1]^2$, i.e. the subsets $A$ of $[0,1]^2$ such that $\forall t\in A$, $[0,t] \subseteq A$. 
For all $n\in\N$, let $\mathcal{A}_n$ be the collection of finite unions of sets in the dissecting collection of the diadic rectangles of $[0,1]^2$, i.e.
\begin{equation*}
\mathcal{A}_n = \left\{ \bigcup_{finite} [0,x] : 2^n x \in \mathbf{Z}^2 \cap (0,2^n]^2 \right\} \cup \{0\} \cup \{\emptyset \}.
\end{equation*}
Then, it can be shown that the cardinal $k_n$ of $\mathcal{A}_n$ satisfies $k_n\geq 2^{2^n}$ for all $n\in\N$. For all $U\in \Acal_n$, we can see that $\inf_{V\in \Acal_n, V\supsetneq U} d_{\lambda}(U,V) = 2^{-2n}$, hence there does not exist any $\qA$ such that $2^{-2n}$ and $k_n^{-1/\qA}$ are of the same order. 
Consequently the subclasses $(\Acal_n)_{n\in\N}$ cannot verify Assumption (\HA).

\end{example}

This final example gives a non-integer $\qA$ and should illustrate why our framework is fundamentally more general than the multiparameter framework.

\begin{example}[Indexing collection with $\qA\notin \N$]\label{ex:non-integer exponent}
We give an example of an indexing class $\Acal$ on a measure space $(\mathcal{T},\mu)$ admitting a non-integer $\qA$ which turns out to be equal to the Hausdorff dimension of $\mathcal{T}$.

In the sequel, we shall denote by $\mu_\varphi$ the Hausdorff measure defined from some gauge function $\varphi$ (see the book \cite{Falconer} for definitions). For a measurable set $A$, we say that $A$ has exact Hausdorff mesure $\mu_\varphi$ if $0<\mu_\varphi(A)<\infty$.
It is known (\cite[p.177]{xiao2009}) that the graph of the L\'evy fractional Brownian motion from $\R^N$ to $\R$ with Hurst parameter $H$ has almost surely an exact Hausdorff measure which is given by the gauge function:
\begin{equation*}
\varphi(r) = r^{N+1-H} (\log\log 1/r)^{\frac{H}{N}}\ .
\end{equation*}
So let us fix an $\omega$ such that $\varphi$ is indeed an exact Hausdorff measure for $\mathcal{T}\equiv\text{Gr}(X) = \{(s,X_s(\omega)),\ s\in [0,1]^N\}$. We define $\Acal$ as the collection of subsets of the form $U_{t,x} = \{(s,X_s(\omega)),\ s\in [0,t]^N,\ |X_s(\omega)|\leq x\}$ for $t\in[0,1]^N$ and $x\in[0,1]$ (assuming without restriction that $\sup_{t\in[0,1]^N} X_t(\omega) = 1$). The approximating finite subcollection $\Acal_n$ is defined, for any $n\in \N^*$, by the sets $A_{k,j}^{(n)} = U_{2^{-n} k, 2^{-n}j},\ k\in \{0,\dots,2^n-1\}^N, j\in\{0,\dots, J_n\}$, where $J_n$ is some integer (see Appendix \ref{app:0}). \\
It follows that the distance $d_{\mu_\varphi}(A_{k,j}^{(n)},A_{k+1,j+1}^{(n)}) = \mu_\varphi(A_{k+1,j+1}^{(n)}\setminus A_{k,j}^{(n)})$ between a ``dyadic'' set and its nearest neighbour is given by:
\begin{align*}
\mu_\varphi\left(\{(s,X_s(\omega)),\ s\in [0,2^{-n}(k+1)]\setminus [0,2^{-n}k],\ 2^{-n}j< X_s(\omega)\leq 2^{-n}(j+1)\}\right) 
\end{align*}
which is bounded by a constant times $k_n^{-1/(N+1-H)}$ (see App. \ref{app:0}).
Hence $\qA = N+1-H$.
\end{example}

\subsection{Kolmogorov's criterion}\label{sec:Kolm}
 
For any deterministic function $f:\mathcal{A}\rightarrow\textbf{R}$,
let us consider the {\em modulus of continuity} on any totally bounded
$\mathcal{B}\subset\mathcal{A}$
\[
\omega_{f,\mathcal{B}}(\delta)=\sup_{U,V\in\mathcal{B} \atop
\dA(U,V)\leq\delta} |f(U)-f(V)|\ ,\quad \delta>0\ .
\]

\noindent Recall that the function $f$ is said {\em H\"older continuous of order 
$\alpha>0$} if for all totally bounded $\mathcal{B}\subset\mathcal{A}$
one of the following equivalent conditions holds (e.g. see \cite{davar}, Chapter~$5$)
\begin{enumerate}[(i)]
\item $\displaystyle\limsup_{\delta\rightarrow 0}\delta^{-\alpha}.\omega_{f,\mathcal{B}}(\delta)
<\infty.$
\item There exists $M>0$ and $\delta_0>0$ 
such that 
for all $U,V\in\mathcal{B}$ with $\dA(U,V)<\delta_0$, 
$|f(U)-f(V)|\leq M.\dA(U,V)^{\alpha}$.
\end{enumerate}

For any general set-indexed Gaussian process, Dudley's Corollary 2.3 in \cite{dudley} allows to compute a modulus of continuity (giving the same kind of result than following Corollary \ref{GaussKolm}). This result holds under certain entropic conditions on the indexing collection, which are not so different from (\ref{eq:assump1}) since in most concrete examples, verifying an entropic condition amounts to finding an upper bound for the distance induced by the process.  
Assumption (\HA) and more precisely (\ref{eq:assump2}) allow to prove a continuity criterion in the non-Gaussian case. 
The following Theorem \ref{kolmth} does so in the general set-indexed framework of Ivanoff and Merzbach.


\begin{definition}\label{def:contract}
A (pseudo-)distance $\dA$ on $\Acal$ is said:
\begin{enumerate}[(i)]
\item\label{cond:OC} {\em Outer-continuous} 
if for any non-increasing sequence $(U_n)_{n\in\N}$ in $\Acal$ converging to $U=\bigcap_{n\in\N}U_n \in\Acal$,
$\dA(U_n, U)$ tends to $0$ as $n$ goes to $\infty$ ;

\item\label{cond:contract} {\em Contractive} if it is outer-continuous and if for any $U, V, W \in \Acal$,
\begin{equation*}
\dA(U\cap W,V\cap W) \leq \dA(U,V) .
\end{equation*}
 
\end{enumerate}
\end{definition}

\begin{remark}
The most important metrics in the context of set-indexed processes, $d_m$ and $d_H$, are contractive.
\end{remark}

\begin{theorem}\label{kolmth}
Let $\dA$ be a contractive (pseudo-)distance on the indexing collection $\mathcal{A}$, whose subclasses $\underline{\Acal}=(\Acal_n)_{n\in\N}$ satisfy Assumption (\HA) with a discretization exponent $\qA>0$. Let $X=\left\{ X_U ;\; U\in\mathcal{A} \right\}$ be a set-indexed process
such that
\begin{equation}\label{eq:hypoProcess}
\forall U,V\in\mathcal{A},\quad
\EE\left[|X_U-X_V|^{\alpha}\right] \leq
K\ \dA(U,V)^{\qA+\beta}
\end{equation}
where $K$, $\alpha$ and $\beta$ are positive constants.\\
Then, the sample paths of $X$ are almost surely locally $\gamma$-H\"older continuous 
for all $\gamma\in (0,\frac{\beta}{\alpha})$, i.e.
there exist a random variable $h^*$ and a constant $L>0$ 
such that almost surely
\begin{equation*}
\forall U,V\in\mathcal{A},\qquad \dA(U,V)<h^* \Rightarrow
|X_U - X_V| \leq L \ \dA(U,V)^{\gamma}.
\end{equation*}
\end{theorem}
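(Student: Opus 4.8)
The plan is to mimic the classical chaining argument behind Kolmogorov's continuity theorem, but with the ``dyadic levels'' replaced by the finite subcollections $\Acal_n$ and the ``dyadic distance $2^{-n}$'' replaced by $k_n^{-1/\qA}$. The normalization $M_1=1$ is in force. First I would set up the discrete machinery: for fixed $U\in\Acal$ let $U_n=g_n(U)\in\Acal_n$, so that $U=\bigcap_n U_n$, $U_n\supseteq U$, and by \eqref{eq:assump1} $\dA(U_n,g_n(U_n))=\dA(U_n,U_n)=0$ is useless as stated, so instead I would use the chain between \emph{consecutive} projections: since $g_{n+1}(U)\in\Acal_{n+1}$ and $g_n$ is a projection, one shows $g_n(g_{n+1}(U))=g_n(U)$ (monotonicity of the $g_n$'s with respect to the nested $\Acal_n$), whence $\dA(U_n,U_{n+1})\le \dA(g_n(U_{n+1}),U_{n+1})\le k_{n+1}^{-1/\qA}$ by \eqref{eq:assump1} applied at level $n+1$. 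Contractivity (Definition~\ref{def:contract}) and outer-continuity guarantee that $\dA(U_n,U)\to 0$, so $X_U=\lim_n X_{U_n}$ in probability along a subsequence; combined with the a.s. uniform control below this will upgrade to the full statement.

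Next I would control the increments \emph{within} each level. For a fixed $n$, consider the finitely many pairs $(U,V)\in\Acal_n\times\Acal_n$ with $V\supsetneq U$ and $\dA(U,V)\le 3k_n^{-1/\qA}$; there are at most $k_n\,N_n$ of them by the definition of $N_n$ in \eqref{eq:assump2}. For each such pair, Markov's inequality applied to \eqref{eq:hypoProcess} gives
$$
\PP\bigl(|X_U-X_V|> k_n^{-\gamma/1}\cdot c_n\bigr)\le K\,c_n^{-\alpha}\,k_n^{\gamma\alpha/\qA\cdot(\cdots)},
$$
and the right tuning is to test the event $|X_U-X_V|>k_n^{-\gamma'/\qA}$ for some $\gamma'$ with $\gamma<\gamma'<\beta/\alpha$: then
$$
\PP\Bigl(\exists (U,V)\in\Acal_n^2,\ V\supsetneq U,\ \dA(U,V)\le 3k_n^{-1/\qA},\ |X_U-X_V|>k_n^{-\gamma'/\qA}\Bigr)
\le K\,k_n\,N_n\,k_n^{\gamma'\alpha/\qA}\,k_n^{-(\qA+\beta)/\qA}\cdot 3^{\qA+\beta},
$$
whose exponent in $k_n$ is $1+\gamma'\alpha/\qA-1-\beta/\qA=(\gamma'\alpha-\beta)/\qA<0$, i.e. this bound is $K'\,N_n\,k_n^{-\delta}$ for some $\delta>0$. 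By \eqref{eq:assump2} this is summable, so Borel--Cantelli furnishes a random $n_0$ such that for all $n\ge n_0$, every such within-level pair satisfies $|X_U-X_V|\le k_n^{-\gamma'/\qA}$.

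Then comes the chaining itself. Given $U,V\in\Acal$ with $\dA(U,V)$ small, pick the unique $n$ with $k_{n+1}^{-1/\qA}\le \dA(U,V)< k_n^{-1/\qA}$ (possible for $\dA(U,V)$ below a deterministic threshold, by admissibility \eqref{admissibility} forcing $k_n\to\infty$). Write the telescoping sums $X_U-X_{U_n}=\sum_{j\ge n}(X_{U_j}-X_{U_{j+1}})$ and likewise for $V$, where one must check that each consecutive pair $(U_{j+1},U_j)$ — after possibly replacing one by the intersection $U_j\cap U_{j+1}$, which contractivity keeps at distance $\le k_{j+1}^{-1/\qA}$ and which by minimality sits in the relevant $\mathcal V_j(\cdot)$ neighborhood — is one of the within-level pairs handled above, so that the tail $\sum_{j\ge n}k_j^{-\gamma'/\qA}$ converges geometrically (again via admissibility) and is $\le C\,k_n^{-\gamma'/\qA}\le C\,\dA(U,V)^{\gamma'}$. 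For the ``horizontal'' step connecting $U_n$ to $V_n$ at level $n$: since $\dA(U,V)<k_n^{-1/\qA}$ and $\dA(U,U_n),\dA(V,V_n)\le k_n^{-1/\qA}$, the triangle inequality gives $\dA(U_n,V_n)\le 3k_n^{-1/\qA}$; one of $U_n\cap V_n\subseteq U_n$, $U_n\cap V_n\subseteq V_n$ is again a within-level pair (contractivity makes $\dA(U_n\cap V_n,U_n)\le\dA(V_n,\emptyset\cup\cdots)$— more precisely $\le\dA(V_n,U_n)\le 3k_n^{-1/\qA}$), so $|X_{U_n}-X_{V_n}|\le 2k_n^{-\gamma'/\qA}$. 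Adding the three pieces yields $|X_U-X_V|\le L\,\dA(U,V)^{\gamma'}\le L\,\dA(U,V)^{\gamma}$ once $\dA(U,V)<1$, with $h^*$ the (random) threshold $k_{n_0}^{-1/\qA}$ depending on the Borel--Cantelli index $n_0$.

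The main obstacle I anticipate is the bookkeeping in the horizontal/vertical steps: because set-indexed increments do not telescope as cleanly as $X_t-X_s$, one repeatedly has to insert intersection sets $U_j\cap U_{j+1}$ (or $U_n\cap V_n$) to realize each elementary increment as a comparison between two \emph{comparable} elements of the \emph{same} subcollection $\Acal_j$, and then invoke the minimality condition \eqref{eq:assump2} to bound how many such comparisons occur and contractivity to keep all the auxiliary distances under the threshold $3k_j^{-1/\qA}$. Getting these three ingredients — contractivity, the cardinality bound $k_nN_n$, and admissibility of $(k_n)$ — to interlock so that the union bound is summable and the geometric tails converge is the crux; everything else is the standard Kolmogorov chaining.
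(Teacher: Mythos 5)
Your architecture is the same as the paper's: a ``vertical'' Borel--Cantelli event along the projections $g_j(U)$, a ``horizontal'' one over the at most $k_nN_n$ pairs supplied by \eqref{eq:assump2}, contractivity to reduce to nested pairs, and chaining at the scale $n$ determined by $k_{n+1}^{-1/\qA}\leq\dA(U,V)<k_n^{-1/\qA}$. The gap is in the endgame of the vertical chain. You bound each consecutive increment by $k_j^{-\gamma'/\qA}$ and then assert $\sum_{j\geq n}k_j^{-\gamma'/\qA}\leq C\,k_n^{-\gamma'/\qA}\leq C\,\dA(U,V)^{\gamma'}$. The last inequality is reversed: on your interval $\dA(U,V)<k_n^{-1/\qA}$, so $\dA(U,V)^{\gamma'}<k_n^{-\gamma'/\qA}$; the quantity you are allowed to compare against is $k_{n+1}^{-\gamma/\qA}\leq\dA(U,V)^{\gamma}$, and $k_{n+1}$ may be much larger than $k_n$. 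Moreover, admissibility \eqref{admissibility} does not give geometric decay of $(k_j^{-\gamma'/\qA})_j$: it yields $k_{j+1}\leq k_j^{1+\delta}$ for large $j$ and $\sum_jk_j^{-\delta}<\infty$, but for instance $k_j\sim e^{\sqrt{j}}$ is admissible and the tail $\sum_{j\geq n}k_j^{-\gamma'/\qA}$ then loses a factor of order $\sqrt{n}$ against $k_n^{-\gamma'/\qA}$. Both defects are curable by the device the paper actually uses: fix the target threshold at $k_{n+1}^{-\gamma/\qA}$ \emph{before} telescoping, split $\{\sum_{j\geq n}|X_{g_{j+1}(U)}-X_{g_j(U)}|\geq k_{n+1}^{-\gamma/\qA}\}$ into the events $\{|X_{g_{j+1}(U)}-X_{g_j(U)}|\geq a_jk_{n+1}^{-\gamma/\qA}\}$ with $\sum_ja_j\leq1$ and $a_j^{\alpha}=k_j^{-(\beta-\alpha\gamma)/3\qA}$, and then invoke admissibility in the form $k_{n+1}^{\alpha\gamma/\qA}\leq k_n^{\alpha\gamma(1+\delta)/\qA}$ to make the resulting union bound summable in $n$. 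This step is precisely where the strict inequality $\gamma<\beta/\alpha$ (your unspent slack $\gamma'$) gets consumed.

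A second, smaller issue: the consecutive pair $(g_{j+1}(U),g_j(U))$ need not belong to $\mathcal{V}_{j+1}(g_{j+1}(U))$, because \eqref{eq:assump1} only controls $\dA(g_{j+1}(U),g_j(U))$ by a multiple of $k_j^{-1/\qA}$, which can exceed $3k_{j+1}^{-1/\qA}$. So you cannot fold the vertical increments into the horizontal within-level family; they form a separate family of at most $k_{j+1}$ pairs per level (cheaper, since no $N_n$ factor is needed, but requiring the weighted-threshold trick above), which is why the paper runs two separate Borel--Cantelli arguments. Your horizontal step (inserting $U_n\cap V_n$ and using $\dA(U_n,U_n\cap V_n)\leq\dA(U_n,V_n)\leq3k_n^{-1/\qA}$ from contractivity) and your sketch of the extension from the dense class $\bigcup_n\Acal_n$ to all of $\Acal$ via outer-continuity and convergence in probability are consistent with the paper's treatment, modulo the fact that the conclusion is formally about a modification $\tilde{X}$ of $X$, which the paper constructs explicitly.
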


\begin{proof}
Let us fix $\gamma\in (0,\frac{\beta}{\alpha})$ and denote $\mathcal{D}=\bigcup_{n\in\N}\mathcal{A}_n$ a countable dense subset of $\Acal$. First, let $(a_j)_{j\in \N}$ be any sequence of positive real numbers such that $\sum_{j\in\N}a_j<+\infty$, and for $n\in \N$ such that $\sum_{j\geq n} a_j \leq 1$, we have
\begin{align}\label{eq:proofKolm1}
\PP\bigg(\sup_{U\in \mathcal{D}} |X_U-X_{g_n(U)}|&\geq k_{n+1}^{-\gamma/\qA} \bigg) \nonumber\\
&\leq \PP\left(\exists U\in \mathcal{D},\ \sum_{j=n}^{\infty} |X_{g_{j+1}(U)}-X_{g_j(U)}| \geq k_{n+1}^{-\gamma/\qA}\right) \nonumber\\
&\leq \PP\bigg(\exists U\in \mathcal{D}, \exists j\geq n, |X_{g_{j+1}(U)}-X_{g_j(U)}|\geq a_jk_{n+1}^{-\gamma/\qA}\bigg) \\
&\leq \PP\left(\exists j\geq n,\ \exists V\in \Acal_{j+1},\ |X_{V}-X_{g_j(V)}| \geq a_j k_{n+1}^{-\gamma/\qA}\right) \nonumber \\
&\leq \sum_{j=n}^{\infty} \sum_{V\in \Acal_{j+1}} \PP\left(|X_{V}-X_{g_j(V)}| \geq a_j k_{n+1}^{-\gamma/\qA}\right). \nonumber
\end{align}

\noindent Now applying successively Tchebyshev's inequality, (\ref{eq:hypoProcess}) and Equation (\ref{eq:assump1}) of Assumption (\HA),
\begin{align*}
\PP\left(\sup_{U\in \mathcal{D}} |X_U-X_{g_n(U)}| \geq k_{n+1}^{-\gamma/\qA} \right) 
&\leq  \sum_{j=n}^{\infty} k_{j+1}  a_j^{-\alpha} k_{n+1}^{\alpha \gamma/\qA} \ \sup_{V\in \Acal_{j+1}}\EE\left[|X_V-X_{g_j(V)}|^{\alpha}\right] \\
&\leq K\ k_{n+1}^{\alpha \gamma/\qA} \sum_{j=n}^{\infty} a_j^{-\alpha} k_{j+1} \sup_{V\in \Acal_{j+1}} \dA(V,g_j(V))^{\qA + \beta} \\
&\leq K\ k_{n+1}^{\alpha \gamma/\qA} \ \sum_{j=n}^{\infty} a_j^{-\alpha} \frac{k_{j+1}}{k_j} k_{j}^{-\beta/\qA}.
\end{align*}

\noindent The admissibility of $(k_n)_{n\in \N}$ implies that for $\delta>0$, and for $n$ large enough (depending on $\delta$), $k_{n+1}^{\alpha \gamma/\qA}\leq (k_n^{\alpha \gamma/\qA})^{1+\delta}$, so that:
\begin{align*}
\PP\left(\sup_{U\in \mathcal{D}} |X_U-X_{g_n(U)}| \geq k_{n+1}^{-\gamma/\qA} \right) &\leq K\ k_{n}^{\delta \alpha \gamma/\qA} \ \sum_{j=n}^{\infty} a_j^{-\alpha} \frac{k_{j+1}}{k_j} k_{j}^{-\beta/\qA} k_n^{\gamma \alpha/\qA} \\
&\leq K\ k_{n}^{\delta \alpha \gamma/\qA} \ \sum_{j=n}^{\infty} a_j^{-\alpha} \frac{k_{j+1}}{k_j} k_{j}^{-(\beta-\gamma \alpha)/\qA}.
\end{align*}

\noindent Since $\beta-\alpha.\gamma>0$, $(a_j^{\alpha})_{j\in \N}$ can be chosen equal to $(k_j^{-(\beta- \alpha \gamma)/3 \qA})_{j\in \N}$ (which is indeed summable because $k_n$ is admissible), and then:
\begin{align*}
\PP\left(\sup_{U\in \mathcal{D}} |X_U-X_{g_n(U)}| \geq k_{n+1}^{-\gamma/\qA} \right) \leq K\ k_{n}^{\delta \alpha \gamma/\qA} \ \sum_{j=n}^{\infty} \frac{k_{j+1}}{k_j} k_{j}^{-2(\beta-\gamma \alpha)/3\qA},
\end{align*}
which finally leads to, for $\delta= (\beta - \alpha \gamma)/(6\alpha \gamma)$,
\begin{align*}
\PP\left(\sup_{U\in \mathcal{D}} |X_U-X_{g_n(U)}| \geq k_{n+1}^{-\gamma/\qA} \right) \leq K\ k_{n}^{-\delta \alpha \gamma/\qA} \ \sum_{j=n}^{\infty} \frac{k_{j+1}}{k_j} k_{j}^{-(\beta-\gamma \alpha)/3\qA}.
\end{align*}

\noindent Thus, this probability is summable and Borel-Cantelli's theorem implies the existence of $\Omega^{*}\subset\Omega$ with $\PP(\Omega^*)=1$ such that $\forall\omega\in\Omega^{*}$,
\begin{align}\label{eq:BC1}
\exists n^{*}(\omega)\in\N,\; \forall n\geq n^{*}, \forall U\in\mathcal{D}, \quad
|X_U-X_{g_n(U)}| < k_{n+1}^{-\gamma/\qA}.
\end{align}

\noindent Now, we develop the same argument for the following probability:
\begin{align}\label{eq:pairs}
\PP\left(\sup_{U\in \Acal_n} \sup_{V\in \mathcal{V}_n(U)} |X_U - X_V| \geq k_{n+1}^{-\gamma/\qA}\right) &\leq k_n \ N_n\ \sup_{U\in \Acal_n} \sup_{V\in \mathcal{V}_n(U)} \PP\left(|X_U - X_V| \geq k_{n+1}^{-\gamma/\qA}\right) \nonumber\\
&\leq K \ N_n \ k_{n+1}^{\alpha\gamma/\qA}\ k_n^{-\beta/\qA} \nonumber\\
&\leq K\ N_n\ k_n^{-(\beta-\alpha\gamma)/2\qA},
\end{align}
where we used $\delta$ as in the previous paragraph. This is summable by (\ref{eq:assump2}), hence there exists $\Omega^{**}$ a measurable subset of $\Omega$ of probability $1$ and $n^{**}$ a integer-valued finite random variable  such that on $\Omega^{**}$:
\begin{align}\label{eq:BC2}
\forall n\geq n^{**}, \quad
\sup_{U\in \Acal_n} \sup_{V\in \mathcal{V}_n(U)} |X_U-X_V| < k_{n+1}^{-\gamma/\qA}.
\end{align}

\noindent For any $U,V \in \mathcal{D}$, there is a unique $n\in \N$ such that $k_{n+1}^{-1/\qA}\leq \dA(U,V) < k_n^{-1/\qA}$. Let $I_n = [k_{n+1}^{-1/\qA},k_n^{-1/\qA})$. Without any restriction, we assume that $U\subseteq V$. Indeed, if this not the case, we shall consider $X_{U}- X_{V} = X_{U}-X_{U\cap V}+X_{U\cap V} - X_{V}$, where $\dA(U,U\cap V) \leq \dA(U,V)$ by contractivity. Since this implies that $g_n(V)\in \mathcal{V}_n(g_n(U))$, we will write, on $\Omega^*\cap \Omega^{**}$, for any $n\geq n^*\vee n^{**}$:
\begin{align}\label{eq:finChain}
\sup_{\substack{U,V\in \mathcal{D}\\ \dA(U,V) \in I_n}} |X_U -X_V| &\leq \sup_{\substack{U,V\in \mathcal{D}\\ \dA(U,V)\in I_n}}\left( |X_U - X_{g_n(U)}|\ + |X_{g_n(U)} - X_{g_n(V)}| + |X_{g_n(V)} -X_V| \right) \nonumber\\
&\leq 3 \ k_{n+1}^{-\gamma/\qA} \\
&\leq 3 \ \dA(U,V)^{\gamma} , \nonumber
\end{align}
as a consequence of Equations (\ref{eq:BC1}) and (\ref{eq:BC2}). Since $\Omega^*\cap \Omega^{**}$ is of probability $1$, we have proved that there exist a constant $L>0$ and a random variable $h^*$ such that
\begin{align}\label{eq:resultD}
\forall U,V\in \mathcal{D};\quad \dA(U,V)<h^*\Rightarrow
|X_U-X_V|\leq L\ \dA(U,V)^{\gamma} \quad\textrm{a.s.}
\end{align}


In the last part of the proof, we need to extend (\ref{eq:resultD}) to the whole class $\mathcal{A}$.
From the outer-continuity of $\dA$, we can claim: \\
On $\Omega^*$, for all $\epsilon \in (0,h^*)$, for all $U$ and $V$ in $\mathcal{A}$ with $\dA(U, V)<h^*-\epsilon$,
there exists $n_0>n^{*}$ such that $\dA(g_n(U), g_m(V))<h^*$ for all $n\geq n_0$ and $m\geq n_0$.
Thus by (\ref{eq:resultD}),
\begin{equation}\label{resultg_n}
\forall n>n_0, \forall m>n_0;\quad
|X_{g_n(U)}-X_{g_m(V)}|\leq L\ \dA(g_n(U), g_m(V))^{\gamma}.
\end{equation}

\noindent We define the process $\tilde{X}$ by
\begin{itemize}
\item $\forall\omega\notin\Omega^{*}$, $\forall U\in\mathcal{A}$, 
$\tilde{X}_U(\omega)=0$,
\item $\forall\omega\in\Omega^{*}$, 
\begin{itemize}
\item $\forall U\in \mathcal{D}$, $\tilde{X}_U(\omega)=X_U(\omega)$
\item $\forall U\in\mathcal{A}\setminus \mathcal{D}$,
$\tilde{X}_U(\omega)=\lim_{n\rightarrow\infty}X_{g_n(U)}(\omega)$.
\end{itemize}
\end{itemize}
Applying (\ref{resultg_n}) with $V=U$, the outer-continuity property of $\dA$ implies that $\left(X_{g_n(U)}(\omega)\right)_{n\in\N}$ is a Cauchy sequence and then converges in $\R$. 

\noindent The process $\tilde{X}$ satisfies almost surely
\begin{equation*}
\forall U,V\in\mathcal{A};\quad \dA(U, V)<h^*\Rightarrow
|\tilde{X}_U-\tilde{X}_V| \leq L.\;\dA(U, V)^{\gamma}.
\end{equation*}

\noindent Moreover,
\begin{itemize}
\item $\forall U\in \mathcal{D}$, $\tilde{X}_U=X_U$ almost surely.
\item $\forall U\in\mathcal{A}\setminus \mathcal{D}$, by construction,
$X_{g_n(U)}\stackrel{\textrm{a.s.}}{\rightarrow}\tilde{X}_U$ as $n\rightarrow\infty$.\\
\end{itemize}
Since $\EE\left[|X_{g_n(U)}-X_U|^{\alpha}\right]$ converges to $0$ when $n\rightarrow\infty$, the sequence $\left(X_{g_n(U)}\right)_{n\in\N}$ converges in probability to $X_U$. So, there exists a subsequence converging a.s.\\
From these two facts, we get $\tilde{X}_U=X_U$ a.s.
\end{proof}

The following example illustrates how the previous theorem can be applied to a non-Gaussian process whose H\"older regularity could not be obtained either by means of the theory of Ledoux and Talagrand \cite{LedouxTalagrand}(since it does not allow to treat H\"older regularity of general stochastic processes), nor by the Kolmogorov's criterion of \cite{davar} where the H\"older regularity is treated in the Euclidean space.

\begin{example}[Harmonizable fractional stable motion]\label{ex:Hfsm}
We define a set-indexed version of the harmonizable fractional stable motion (Hfsm), which is a process with fat tails (see e.g. \cite{xiao2010}).
Let $\Acal$ be an indexing collection on a measurable space $(\mathcal{T},m)$, such that $\qA\in(1,2)$ (that can be constructed as in Example \ref{ex:non-integer exponent}). Let $H\in (0,1)$ and $\alpha\in [1,2)$. Let $X$ be an $\Acal$-indexed process such that for any $U\in \Acal$, $X_U$ is an $\alpha$-stable random variable, and such that for any $\gamma<\alpha$:
\begin{equation*}
\EE\left(|X_U - X_V|^\gamma\right) \leq d_m(U,V)^{\gamma H} \ ,\quad U,V\in \Acal\ .
\end{equation*}
The previous theorem implies that $X$ is almost $H-\qA/\alpha$ H\"older continuous on $\Acal$, as long as we choose $\alpha H>\qA$.

\noindent For the sake of completeness, we sketch the construction of this process in Appendix \ref{app:0}.
\end{example}


As in the multiparameter's case, a simpler statement holds for Gaussian
processes.

\begin{corollary}\label{GaussKolm}
Let $\dA$ be a (pseudo-)distance on the indexing collection $\mathcal{A}$, whose subclasses $\underline{\Acal}=(\Acal_n)_{n\in\N}$ satisfy Assumption (\HA). 
Let $X=\left\{ X_U ;\; U\in\mathcal{A} \right\}$ be a centered Gaussian set-indexed 
process such that
\begin{equation*}
\forall U,V\in\mathcal{A},\quad
\EE\left[|X_U-X_V|^2\right] \leq K \ \dA(U, V)^{2\beta}
\end{equation*}
where $K>0$ and $\beta>0$. \\
Then, the sample paths of $X$ are almost surely locally $\gamma$-H\"older continuous 
for all $\gamma\in (0,\beta)$.
\end{corollary}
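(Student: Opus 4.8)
The plan is to deduce Corollary \ref{GaussKolm} from Theorem \ref{kolmth} by pushing the integrability exponent to infinity, which is legitimate because all $L^p$-norms of a centered Gaussian variable are comparable. Concretely, if $Z$ is centered Gaussian then, by scaling, $\EE[|Z|^\alpha] = c_\alpha\,\big(\EE[Z^2]\big)^{\alpha/2}$ for every $\alpha>0$, where $c_\alpha = \EE[|N(0,1)|^\alpha]<\infty$. Applying this to $Z=X_U-X_V$ and using the hypothesis of the corollary gives, for every $\alpha>0$,
\begin{equation*}
\forall U,V\in\Acal,\qquad
\EE\big[|X_U-X_V|^\alpha\big] \;=\; c_\alpha\,\big(\EE[|X_U-X_V|^2]\big)^{\alpha/2} \;\leq\; c_\alpha\,K^{\alpha/2}\ \dA(U,V)^{\alpha\beta}.
\end{equation*}

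Next I would fix $\gamma\in(0,\beta)$ and choose $\alpha>0$ large enough that $\alpha>\qA/(\beta-\gamma)$; this forces in particular $\alpha\beta>\qA$. Writing $\alpha\beta = \qA + (\alpha\beta-\qA)$ with $\alpha\beta-\qA>0$, the displayed inequality is precisely the moment condition \eqref{eq:hypoProcess} of Theorem \ref{kolmth}, read with moment order $\alpha$, multiplicative constant $c_\alpha K^{\alpha/2}$, and the H\"older surplus $\alpha\beta-\qA$ playing the role of $\beta$ there. Hence Theorem \ref{kolmth} produces a random variable $h^*$ and a constant $L>0$ such that, almost surely, $\dA(U,V)<h^*$ implies $|X_U-X_V|\leq L\,\dA(U,V)^{\gamma'}$ for every $\gamma'\in\big(0,\tfrac{\alpha\beta-\qA}{\alpha}\big)=\big(0,\beta-\tfrac{\qA}{\alpha}\big)$. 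By the choice of $\alpha$ we have $\gamma<\beta-\qA/\alpha$, so $\gamma$ lies in this interval and $X$ is almost surely locally $\gamma$-H\"older continuous; since $\gamma\in(0,\beta)$ was arbitrary, the corollary follows. Note that $h^*$ and $L$ depend on $\gamma$ through $\alpha$, so (exactly as in Theorem \ref{kolmth}) the conclusion is obtained separately for each $\gamma<\beta$, and the fact that the required $\alpha$ is generally non-integer is harmless because Theorem \ref{kolmth} allows any positive real $\alpha$.

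One point deserving attention is that Theorem \ref{kolmth} also requires $\dA$ to be contractive, a hypothesis not spelled out in the corollary; in practice it holds for the metrics of interest (both $d_m$ and $d_H$ are contractive) and could simply be added to the statement. If one wishes to avoid contractivity altogether in the Gaussian case, an alternative is a direct chaining argument: by \eqref{eq:assump1} the class $\Acal_n$ is a $k_n^{-1/\qA}$-net of $(\Acal,\dA)$, so together with the admissibility of $(k_n)_{n\in\N}$ the metric entropy $N(\Acal,\dA,\epsilon)$ is bounded by a power of $1/\epsilon$; consequently the canonical metric $\rho(U,V)=\EE[|X_U-X_V|^2]^{1/2}\leq K^{1/2}\dA(U,V)^{\beta}$ has a finite Dudley entropy integral, and Dudley's theorem yields an almost sure modulus of continuity of $X$ of the form $\omega_\rho(\delta)\lesssim\delta\sqrt{\log(1/\delta)}$, which translates into $|X_U-X_V|\lesssim\dA(U,V)^{\beta}\sqrt{\log(1/\dA(U,V))}\leq\dA(U,V)^{\gamma}$ once $\dA(U,V)$ is small, for any $\gamma<\beta$.

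I do not expect a genuine obstacle here: the entire content is the Gaussian moment identity, which lets $\beta-\qA/\alpha$ be made arbitrarily close to $\beta$, so the loss of $\qA/\alpha$ built into Theorem \ref{kolmth} vanishes in the limit. The only things to keep track of are the bookkeeping of which constant plays which role when invoking \eqref{eq:hypoProcess}, the $\gamma$-dependence of $h^*$ and $L$, and — if one takes the chaining route instead — the usual separability caveat required by Dudley's theorem.
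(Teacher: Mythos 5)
Your proof follows essentially the same route as the paper: exploit the equivalence of Gaussian moments to raise the integrability exponent, then apply Theorem~\ref{kolmth} with $\alpha$ large enough that the admissible range $\big(0,\beta-\tfrac{\qA}{\alpha}\big)$ captures the prescribed $\gamma$. Your explicit choice $\alpha>\qA/(\beta-\gamma)$ is in fact slightly more careful than the paper's stated condition $2p\beta>\qA$, and your observation that the contractivity of $\dA$ (required by Theorem~\ref{kolmth}) is left implicit in the corollary's statement is a fair one.
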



\begin{remark}\label{rem:hypGauss}
The proof of Theorem \ref{kolmth} shows that when Condition (\ref{eq:assump2}) is removed from Assumption (\HA), the conclusion remains true when the hypothesis (\ref{eq:hypoProcess}) is strengthened in
\begin{equation*}\label{eq:hypoProcess0}
\forall U,V\in\mathcal{A};\quad
\EE\left[|X_U-X_V|^{\alpha}\right] \leq
K\ \dA(U,V)^{2\qA+\beta}.
\end{equation*}
The result follows from the simple estimation $N_n\leq k_n$ in Equation (\ref{eq:pairs}).
In that case, the validity of Corollary \ref{GaussKolm} persists, since the integer $p$ can be chosen such that $2p\beta > 2\qA$ (instead of $2p\beta > \qA$).
\end{remark}

\

As previously mentioned, the Brownian motion indexed by the lower layers of $[0,1]^2$ is discontinuous with probability one (e.g. see Theorem 1.4.5 in \cite{adlerTaylor}). 
The previous Theorem~\ref{kolmth} and Corollary \ref{GaussKolm} do not contradict this fact, since the collection of lower layers of $[0,1]^2$ do not satisfy Assumption (\HA) (see Example \ref{ex:layers}).
This latter result is improved by the following corollary of Theorem \ref{kolmth}.

\begin{corollary}
Any subclasses $(\Acal_n)_{n\in\N}$ satisfying Condition (4) of Definition~\ref{basic} for the indexing collection of lower layers of $[0,1]^2$ do not satisfy Assumption (\HA).
\end{corollary}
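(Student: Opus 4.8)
The plan is to argue by contradiction, using Theorem~\ref{kolmth} as a ``black box'' that converts Assumption~(\HA) into sample-path continuity. Suppose, for contradiction, that there exist subclasses $(\Acal_n)_{n\in\N}$ satisfying Condition~(4) of Definition~\ref{basic} for the indexing collection $\Acal$ of lower layers of $[0,1]^2$, and a (pseudo-)distance $\dA$ on $\Acal$, such that Assumption~(\HA) holds with some discretization exponent $\qA>0$. The natural choice of distance here is $\dA=d_\lambda=d_m$ with $m$ the Lebesgue measure, which is contractive (outer-continuity and the contraction inequality $\lambda((U\cap W)\triangle(V\cap W))\le\lambda(U\triangle V)$ are both straightforward), so the hypotheses of Theorem~\ref{kolmth} on the metric are met.

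Next I would exhibit a process on $\Acal$ to which Theorem~\ref{kolmth} applies but which is known to be pathwise discontinuous, producing the contradiction. The canonical candidate is set-indexed Brownian motion $\{W_A;\ A\in\Acal\}$, i.e.\ the centered Gaussian process with $\EE[W_A W_B]=\lambda(A\cap B)$, so that $\EE[|W_A-W_B|^2]=\lambda(A\triangle B)=d_\lambda(A,B)$. Squaring (using Gaussianity, $\EE[Y^{2p}]=M_p(\EE[Y^2])^p$) gives, for every $p\in\N^*$,
\begin{equation*}
\EE[|W_A-W_B|^{2p}] = M_p\, d_\lambda(A,B)^{p}.
\end{equation*}
Choosing $p$ large enough that $p>\qA$, say $p=\qA+\beta$ with $\beta>0$ (or more carefully $p=\lceil\qA\rceil+1$ and $\beta=p-\qA>0$), this is exactly hypothesis~\eqref{eq:hypoProcess} of Theorem~\ref{kolmth} with $\alpha=2p$ and exponent $\qA+\beta=p$. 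Hence Theorem~\ref{kolmth} would force $\{W_A;\ A\in\Acal\}$ to have a version with almost surely locally $\gamma$-H\"older continuous (in particular continuous) sample paths with respect to $d_\lambda$. But it is classical (Theorem~1.4.5 in \cite{adlerTaylor}, or \cite{Ad90,Ivanoff}) that Brownian motion indexed by the lower layers of $[0,1]^2$ admits \emph{no} continuous modification, giving the desired contradiction. Therefore no such $(\Acal_n)_{n\in\N}$ can satisfy Assumption~(\HA).

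The one genuine subtlety — and the step I expect to need the most care — is the notion of continuity in the contradiction. Theorem~\ref{kolmth} yields continuity with respect to $d_\lambda$ on $\Acal$, whereas the classical discontinuity statement is usually phrased for the Hausdorff metric or in terms of the geometry of lower layers; one must check that $d_\lambda$-continuity of the sample paths is still strong enough to contradict the known result, i.e.\ that the discontinuity of set-indexed Brownian motion on lower layers is robust to the (coarser) topology induced by $d_\lambda$. In fact the standard proofs produce, almost surely, a sequence of lower layers $A_n$ with $\lambda(A_n\triangle A)\to 0$ along which $W_{A_n}$ does not converge to $W_A$, so the obstruction is already a $d_\lambda$-obstruction; alternatively one can invoke Corollary~\ref{GaussKolm} in place of Theorem~\ref{kolmth} for a slightly cleaner statement. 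A minor bookkeeping point is that Definition~\ref{basic}(4) does not by itself guarantee admissibility~\eqref{admissibility} of $(k_n)$, but Assumption~(\HA) is assumed here by contradiction and the discussion following~\eqref{admissibility} explains that admissibility may be arranged by passing to a subsequence or refining the $\Acal_n$, so this causes no loss of generality in the argument.
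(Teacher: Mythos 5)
Your argument is correct and is precisely the one the paper intends: the corollary is presented as an immediate consequence of Theorem~\ref{kolmth} (or Corollary~\ref{GaussKolm}) applied to the set-indexed Brownian motion, whose lack of a continuous modification on lower layers contradicts the H\"older continuity that Assumption~(\HA) would force. Your attention to the choice of metric $d_\lambda$, its contractivity, and the admissibility bookkeeping fills in details the paper leaves implicit, but the route is the same.
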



\begin{remark}
In statistics, one often considers empirical processes on Vapnik-\v{C}ervonenkis sets (see \cite{adlerTaylor}). On such a set, the metric entropy has actually a very similar bound to the one obtained with our assumption, so that Gaussian processes can be treated similarly. This is important in particular for the Brownian bridges arising as limiting processes in nonparametric Central limit theorems.
\end{remark}

\subsection{$\mathcal{C}$-increments}\label{sec:Cinc-Kolm}

So far, we only considered simple {\em increments} of $X$ of the form $X_U - X_V$ for $U,V\in \Acal$ not necessarily ordered. 
However these quantities do not constitute the only natural extension of the one-parameter $X_t-X_s$ ($s,t\in\R_+$) to multiparameter (e.g. \cite{davar,AMSW,herbin}) and set-indexed (e.g. \cite{Ivanoff,ehem2}) settings, particularly when increment stationarity property is concerned. 
This section is devoted to usual increments of set-indexed processes, which extend the rectangular increments of multiparameter processes. Let us define, for any given indexing collection $\Acal$, the collection $\Ccal$ of subsets of $\mathcal{T}$, defined as 
\begin{equation*}
\mathcal{C} = \{U_0\setminus \cup_{i=1}^k U_i; \ U_0, U_1, \dots, U_k\in \Acal, k\in \N\}.
\end{equation*}
This collection is used to index the process $\Delta X$, defined by $\Delta X_C = X_{U_0} - \Delta X_{U_0 \cap \bigcup_{i\geq 1} U_i}$ for $C=U_0\setminus \cup_{i=1}^k U_i$, where $\Delta X_{U_0 \cap \bigcup_{i\geq 1} U_i}$ is given by the {\em inclusion-exclusion}  formula
\begin{equation}\label{eq:inc}
\Delta X_{U_0 \cap \bigcup_{i\geq 1} U_i} = \sum_{i=1}^k \sum_{j_1<\dots<j_i} (-1)^{i-1} X_{U_0 \cap U_{j_1} \cap \dots \cap U_{j_i}}.
\end{equation}
The existence of the increment process $\Delta X$ indexed by $\Ccal$ requires that for any $C\in\mathcal{C}$, the value $\Delta X_C$ does not depend on the representation of $C$.

\begin{corollary}\label{corCl}
Under the hypotheses of Theorem \ref{kolmth} and if the distance $\dA$ on the class $\Acal$ is assumed to be contractive, for each fixed integer $l\geq 1$, for all $\gamma\in (0,\beta/\alpha)$, there exist a random variable $h^{**}$ and a constant $L>0$ such that, with probability one,
\begin{align}
\label{eqHolderCl}
\forall C=U\setminus \bigcup_{i\leq l} V_i \textrm{ with } &U, V_1, \dots, V_l\in\Acal, \nonumber\\
&\max_{i\leq l} \{m(U\setminus V_i)\}<h^{**} \Rightarrow
|\Delta X_C| \leq L \ m(C)^{\gamma}.
\end{align}
\end{corollary}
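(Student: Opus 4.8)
\noindent\emph{Plan.} The idea is to mirror the proof of Theorem~\ref{kolmth}, but to run the chaining and Borel--Cantelli argument at the level of the $\Ccal$-indexed increment process $\Delta X$ instead of for simple increments of $X$. First I would reduce to the ordered case: replacing each $V_i$ by $U\cap V_i\in\Acal$ leaves $C$, the numbers $m(U\setminus V_i)=m(U\setminus(U\cap V_i))$, and the value $\Delta X_C$ prescribed by \eqref{eq:inc} unchanged, so I may assume $V_i\subseteq U$ for all $i$. Then $C=\bigcap_{i=1}^l(U\setminus V_i)$, hence $m(C)\le\min_i m(U\setminus V_i)$, and \eqref{eq:inc} reads $\Delta X_C=X_U+\sum_{\emptyset\neq S\subseteq\{1,\dots,l\}}(-1)^{|S|}X_{\bigcap_{i\in S}V_i}$, a fixed $\pm1$-combination of at most $2^l$ values of $X$ whose pairwise $\dA$-distances are all $\le\max_i\dA(U,V_i)$ by contractivity.

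Next I would discretize $C$ by $C_n:=g_n(U)\setminus\bigcup_{i=1}^l g_n(V_i)\in\Ccal$. Since $\Acal_n$ is stable under intersection, all sets occurring in the inclusion--exclusion expansion of $\Delta X_{C_n}$ lie in $\Acal_n$, while $g_n(V_i)\subseteq g_n(U)$ and $C_n\downarrow C$. Using the consistency $g_n=g_n\circ g_{n+1}$ of the projection maps --- and contractivity of $\dA$ to replace, where needed, $g_n(A\cap B)$ by $g_n(A)\cap g_n(B)$ up to a $\dA$-error $\le\dA(A,B)$ --- one writes $\Delta X_{C_{n+1}}-\Delta X_{C_n}$ as an alternating sum of at most $2^{l+1}$ simple increments $X_W-X_{g_n(W)}$, $W\in\Acal_{n+1}$. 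The decisive point is that these are increments across $\dA$-distances $\dA(W,g_n(W))\le M_1 k_n^{-1/\qA}$ --- the resolution of scale $n$, \emph{not} the scale $\max_i m(U\setminus V_i)$ --- so that \eqref{eq:hypoProcess} and \eqref{eq:assump1} give $\EE[|X_W-X_{g_n(W)}|^\alpha]\le K M_1^{\qA+\beta}k_n^{-(\qA+\beta)/\qA}$, exactly the estimate used in the proof of Theorem~\ref{kolmth}. As $l$, hence $2^{l+1}$, is fixed, this multiplicative constant is harmless.

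Feeding these estimates into the Tchebyshev/union-bound/Borel--Cantelli scheme of \eqref{eq:proofKolm1}--\eqref{eq:BC1}, I would obtain, for $\gamma\in(0,\beta/\alpha)$ and thanks to the admissibility of $(k_n)$, an a.s.\ integer $n^{**}(\omega)$ and a constant $c>0$ with $|\Delta X_{C_{n+1}}-\Delta X_{C_n}|\le k_{n+1}^{-\gamma/\qA}$ for $n\ge n^{**}$, uniformly over all $C$ built from $\Acal_{n+1}$-sets. Summing the telescoping series gives $|\Delta X_C-\Delta X_{C_n}|\le c\,k_n^{-\gamma/\qA}$ for $n\ge n^{**}$, and outer-continuity of $\dA$ together with \eqref{eq:hypoProcess} forces $\Delta X_{C_n}\to\Delta X_C$ a.s.\ (as in the final part of the proof of Theorem~\ref{kolmth}), which extends the estimate from discrete $C$'s to all admissible $C\in\Ccal$.

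It then remains to compare the coarsest relevant term with $m(C)$: one wants to choose the random threshold $h^{**}$ so that $\max_i m(U\setminus V_i)<h^{**}$ forces the first scale $n_0(C)$ with $C_{n_0}\neq\emptyset$ to lie past $n^{**}$ and to satisfy $k_{n_0}^{-\gamma/\qA}\lesssim m(C)^\gamma$; then $\Delta X_{C_{n_0-1}}=0$ and the telescoped bound gives $|\Delta X_C|\le c\,k_{n_0}^{-\gamma/\qA}\lesssim m(C)^\gamma$, which is the claim with $L=c$. This last step is where I expect the real difficulty. A crude triangle-inequality bound on $\Delta X_C$ through the first differences of the first paragraph only yields $(\max_i m(U\setminus V_i))^\gamma$, which is strictly weaker than $m(C)^\gamma$, so one has to quantify the cancellations built into the inclusion--exclusion formula \eqref{eq:inc} --- either showing that the discretizations $C_n$ of $C$ become nonempty only at the scale dictated by $m(C)$, or, equivalently, establishing a moment estimate $\EE[|\Delta X_C|^\alpha]\le K'm(C)^{\qA+\beta}$ on the discretized $\Ccal$-increments (using the contractivity of $\dA=d_m$ and the global validity of \eqref{eq:hypoProcess}) and substituting it for \eqref{eq:hypoProcess} in the Borel--Cantelli step. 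This cancellation phenomenon has no analogue in the real-parameter Kolmogorov theorem and is, I believe, the crux of the proof.
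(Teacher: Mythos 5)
Your chaining scheme is essentially the one the paper uses in Appendix~\ref{app:1}: there too, $g_n$ is extended to $\Acal(u)$ by $g_n(\bigcup V_i)=\bigcup g_n(V_i)$, the difference $|\Delta X_{g_{j+1}(\cup V_i)}-\Delta X_{g_j(\cup V_i)}|$ is bounded via inclusion--exclusion by $K_2(l)\,\sup_W|X_{g_{j+1}(W)}-X_{g_j(W)}|$ (your ``at most $2^{l+1}$ simple increments''), and the Borel--Cantelli estimates \eqref{eq:proofKolm1}--\eqref{eq:BC1} of Theorem~\ref{kolmth} are recycled verbatim. Up to that point the two arguments coincide (you are, if anything, more careful than the paper about $g_n(A\cap B)$ versus $g_n(A)\cap g_n(B)$). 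The genuine gap is the one you name yourself: you never produce the bound $L\,m(C)^\gamma$, only the weaker $L\,\bigl(\max_i m(U\setminus V_i)\bigr)^\gamma$, and the two candidate repairs you sketch --- a moment estimate $\EE[|\Delta X_C|^\alpha]\lesssim m(C)^{\qA+\beta}$, or a geometric claim about the first scale at which the discretization of $C$ is nonempty --- are not carried out and do not follow from \eqref{eq:hypoProcess} without additional work. As written, the proposal is therefore an incomplete proof.

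That said, your diagnosis of where the difficulty sits is accurate, and it is also the thinnest point of the paper's own argument. In Appendix~\ref{app:1} the scale $n_0$ is fixed by $\max_i m(U\setminus V_i)\le k_{n_0}^{-1/\qA}$; the two tail sums in \eqref{eq:maj-deltaXC} are then rigorously bounded by constants times $k_{n_0+1}^{-\gamma/\qA}$, and each term of \eqref{eqC} by $L\bigl((K(l)+2)k_{n_0}^{-1/\qA}\bigr)^\gamma$ via Lemma~\ref{lem:Cl} and Theorem~\ref{kolmth}. The paper then declares these quantities ``proportional to $m(C)^\gamma$'', which amounts to identifying $k_{n_0}^{-1/\qA}\approx\max_i m(U\setminus V_i)$ with a multiple of $m(C)\le\min_i m(U\setminus V_i)$ --- precisely the comparison you observe can fail (take $l=2$ with $m(U\setminus V_1)=\epsilon$, $m(U\setminus V_2)=\delta\ll\epsilon$ and $m(C)$ of order $\epsilon\delta$). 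So the cancellation analysis you ask for is not supplied by the paper either: taken literally, the argument of Appendix~\ref{app:1} yields the statement with $\bigl(\max_i m(U\setminus V_i)\bigr)^\gamma$ in place of $m(C)^\gamma$, and one of the two supplementary routes you propose would be needed to reach the stronger form.
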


\noindent For a proof of this result, see Appendix \ref{app:1}.

\medskip

Corollary \ref{corCl}, as a result on the class $\mathcal{C}^l = \left\{ U\setminus V;\; U \in \Acal, V\in \mathcal{B}^l \right\}$ where \linebreak $\mathcal{B}^l = \left\{\bigcup_{i=1}^l V_i;\; V_1,\dots, V_l \in \mathcal{A} \right\}$, does \emph{not} extend to the whole $\Ccal=\bigcup_{l\geq 1}\Ccal^l$, as the following example shows. 
The next result is an adaptation of an example in \cite{adlerTaylor,Ivanoff} to the set-indexed setting. It states that the Brownian motion can be unbounded on $\Ccal$ when $\Acal$ is the collection of rectangles of $[0,1]^2$. 

\begin{proposition}\label{prop:exBM}
Let $W$ be a Brownian motion indexed by the Borelian sets of $[0,1]^2$, i.e. a centered Gaussian process with covariance structure
\begin{equation*}
\EE[W_C W_{C'}] = \lambda (C \cap C'),\quad\forall C,C' \in \mathcal{B}([0,1]^2)
\end{equation*}
where $\lambda$ denotes the Lebesgue measure.\\
Let $\Acal$ be the collection of rectangles of $[0,1]^2$. In the sequel, we consider the restriction on the class $\Ccal$, related to $\Acal$, of the Brownian motion defined above.\\
Then for all $h>0$, all $M>0$, and for almost all $\omega\in\Omega$, there exist sequences of sets $\left(C_n(\omega)\right)_{n\in\N}$, $\left(C'_n(\omega)\right)_{n\in\N}$ in $\Ccal$ such that $\lambda(C_n(\omega)) \vee \lambda(C'_n(\omega)) < h$ and for $n$ big enough, 
\begin{equation*}
\max \{|W_{C_n(\omega)}(\omega)|, |W_{C'_n(\omega)}(\omega)|\} > \frac{M}{8}.
\end{equation*}
\end{proposition}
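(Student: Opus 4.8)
The plan is to reproduce, in the set-indexed language, the classical construction showing that the Brownian sheet (equivalently, white noise on $[0,1]^2$) is unbounded on thin rectangular-type pieces. The key point is that $\mathcal{C}$ contains sets $C$ of arbitrarily small Lebesgue measure but which, as elements of $\mathcal{C}^l$, require an unbounded number $l$ of sets $V_i$ to be written; it is precisely this unbounded combinatorial complexity that Corollary \ref{corCl} does not control. Concretely, I would fix $h>0$ and, for each $n$, partition (a portion of) $[0,1]^2$ near the diagonal into $N=N(n)$ disjoint ``staircase'' cells $C_1,\dots,C_N$, each of the form (rectangle)$\,\setminus\,$(sub-rectangle), each of Lebesgue measure of order $h/N$ (so $\lambda(C_j)<h$ for $N$ large), and each expressible in $\mathcal{C}$; by the white-noise property the random variables $W_{C_1},\dots,W_{C_N}$ are \emph{independent} centered Gaussians with variances $\lambda(C_j)$ of order $h/N$.

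Next I would exploit independence and a union over the $N$ cells: for a standard Gaussian $Z$ one has $\PP(|Z|>t)\geq c\,e^{-t^2/2}/t$ for $t\geq 1$, so taking $t$ comparable to $\sqrt{2\log N}$ gives $\PP(|W_{C_j}|> M/8)=\PP\big(|Z|> (M/8)/\sqrt{\lambda(C_j)}\big)$, and since $\lambda(C_j)\asymp h/N$ this probability is at least a negative power of $N$ times a constant; choosing the number of cells $N$ growing fast enough (say $N=N_n$ with $\sum_n$ of the complementary probabilities summable, which forces $N_n$ to grow like a suitable power of $n$ — routine to arrange) and using independence, $\PP\big(\max_{j\leq N_n}|W_{C_j}|\leq M/8\big)=\prod_j\big(1-\PP(|W_{C_j}|>M/8)\big)\leq \exp(-N_n\cdot N_n^{-\theta})\to 0$ summably. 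Borel--Cantelli then yields that, almost surely, for all $n$ large there is some cell $C_n(\omega)$ among the $n$-th family with $|W_{C_n(\omega)}(\omega)|>M/8$ and $\lambda(C_n(\omega))<h$; a second family $C'_n$ is produced identically (e.g. from a disjoint strip, or simply by relabelling), giving the two sequences in the statement. Since $h>0$ and $M>0$ were arbitrary, and the exceptional null set can be taken as a countable union over rational $h,M$, the claim follows in full.

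The main obstacle is the bookkeeping of step one: checking that the staircase cells $C_j$ genuinely lie in $\mathcal{C}$ (they do — each is $U_0\setminus U_1$ with $U_0,U_1$ rectangles, hence in $\mathcal{C}^1$ even, though the \emph{union} of many of them is the complicated set), that they are pairwise disjoint so that the $W_{C_j}$ are independent under the white-noise covariance $\EE[W_CW_{C'}]=\lambda(C\cap C')$, and that one can simultaneously keep $\lambda(C_j)<h$ while letting $N_n\to\infty$ fast enough for the Gaussian small-ball estimate plus Borel--Cantelli to close. All of these are elementary but must be set up carefully; the Gaussian tail estimate and Borel--Cantelli step are then standard. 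I would also remark that this shows Corollary \ref{corCl} is sharp in its dependence on the fixed integer $l$: the pathology lives entirely in $\bigcup_{l}\mathcal{C}^l$ through the unboundedness of $l$, consistently with the discontinuity of set-indexed Brownian motion on lower layers discussed earlier.
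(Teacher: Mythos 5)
Your construction breaks down at the key quantitative step. If the cells $C_1,\dots,C_N$ are disjoint with $\lambda(C_j)\asymp h/N$, then $W_{C_j}\sim\mathcal{N}(0,\lambda(C_j))$ and
$\PP\big(|W_{C_j}|>M/8\big)=\PP\big(|Z|>(M/8)\,\lambda(C_j)^{-1/2}\big)\asymp\exp(-c\,M^2N/h)$,
which decays \emph{exponentially} in $N$, not like a negative power of $N$ (a polynomial decay would require a threshold of order $\sqrt{\lambda(C_j)\log N}\to 0$, not the fixed constant $M/8$). Consequently $\PP\big(\max_{j\le N}|W_{C_j}|>M/8\big)\le N e^{-cN}\to 0$: with probability tending to one \emph{none} of your cells exceeds $M/8$, and your Borel--Cantelli argument runs in the wrong direction. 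Indeed $\max_{j\le N}|W_{C_j}|\approx\sqrt{2(h/N)\log N}\to 0$; no single cell of vanishing measure can carry a macroscopic amount of white noise. There is also an internal contradiction you should notice: each of your cells is of the form $U\setminus(V_1\cup\dots\cup V_l)$ with $l\le 2$ fixed and $m(U\setminus V_i)$ small, so Corollary \ref{corCl} already forces $|W_{C_j}|\le L\,\lambda(C_j)^{\gamma}\to 0$ almost surely. The pathology provably cannot be exhibited by sets of bounded combinatorial complexity $l$; producing it is exactly the point of the proposition.

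The mechanism that actually works (the lower-layers example of Adler--Taylor and Ivanoff--Merzbach, of which the proposition is stated to be an adaptation) is additive rather than extremal: take $n$ disjoint cells $Q_1,\dots,Q_n$ arranged along an anti-diagonal staircase, each of area $n^{-2}$, and form the \emph{random union} $D_n(\omega)=\bigcup\{Q_i:\ W_{Q_i}(\omega)>0\}$. Then $\lambda(D_n)\le 1/n$, while $W_{D_n}=\sum_i W_{Q_i}^{+}$ stays near $n\cdot n^{-1}\,\EE[Z^{+}]=1/\sqrt{2\pi}>0$ by the law of large numbers; stacking $K\asymp M$ nested staircase bands of total measure less than $h$ accumulates a value exceeding $M$. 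The remaining, essential work --- which your proposal does not address at all --- is to realize these random unions inside $\Ccal$, i.e. as a single rectangle minus a finite union of rectangles whose number grows with $n$, or as a difference of two such sets $C_n,C'_n$; this is where the two sequences, the maximum, and the loss of the factor $8$ in the statement come from. As written, the proposal cannot be repaired by letting $N_n$ grow faster: the strategy of hunting for one exceptional small cell must be replaced by summing the positive parts over a random sub-staircase of unbounded length.
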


Without any stronger condition than Assumption (\HA) on the sub-semilattices $(\mathcal{A}_n)_{n\in\N}$, the previous example of set-indexed Brownian motion dismisses a possible definition of the H\"older continuity for stochastic processes of the form:
\begin{equation*}
\exists M>0, \exists\delta_0>0:\ \forall C\in \mathcal{C}\ \textrm{with}\ m(C)<\delta_0,\ |\Delta X_C|\leq M.m(C)^{\alpha}.
\end{equation*}

\

\section{H\"older exponents for set-indexed processes}\label{sec:HoExp}

We localize the two expressions (i) and (ii) for H\"older-continuity (Section \ref{sec:Kolm}) which leads to two different notions. 
Indeed, for the distance $\dA$ on $\mathcal{A}$, if $B_{\dA}(U_0,\rho)$ (or simply $B(U_0,\rho)$ if the context is clear) denotes the open ball centered in $U_0\in\mathcal{A}$ and whose radius is $\rho>0$,
we get

\begin{enumerate}[(i)$_{loc}$]
\item
\[
\limsup_{\delta\rightarrow 0+} \delta^{-q}
\sup_{U,V\in B_{\dA}(U_0,\delta)}|f(U)-f(V)| < \infty.
\]
\item There exist $M>0$ and $\delta_0>0$ such that
\[
\forall U,V\in B_{\dA}(U_0,\delta_0),\quad
|f(U)-f(V)| \leq M\ \dA(U,V)^q.
\]
\end{enumerate}

\noindent Localizing around
$U_0\in\mathcal{A}$ only gives (ii)$_{loc}\Rightarrow$(i)$_{loc}$. Thus we consider two H\"older exponents at 
$U_0\in\mathcal{A}$:

\begin{itemize}
\item the pointwise H\"older exponent
\begin{equation}\label{eq:pointholder}
\alphar_f(U_0)=\sup\left\{ \alpha:\;\limsup_{\rho\rightarrow 0}
\sup_{U,V\in B(U_0,\rho)}
\frac{|f(U)-f(V)|}{\rho^{\alpha}}<\infty \right\},
\end{equation}
\item and the local H\"older exponent
\begin{equation}\label{eq:localholder}
\widetilde{\alphar}_f(U_0)=\sup\left\{ \alpha:\;\limsup_{\rho\rightarrow 0}
\sup_{U,V\in B(U_0,\rho)}
\frac{|f(U)-f(V)|}{\dA(U,V)^{\alpha}}<\infty \right\}.
\end{equation}
\end{itemize}
Each one allows to measure the regularity of the function $f$.
In general, we have 
\begin{equation}\label{ineqholder}
\tilde{\alphar}_f \leq \alphar_f \ ,
\end{equation}
but the inequality can be strict (see for instance the chirp function in \cite{ehjlv}). Hence, the sole pointwise exponent is not sufficient to describe the irregularity of the function.

In the case of Gaussian processes (see \cite{ehjlv}), we define
the {\em deterministic pointwise H\"older exponent}
\begin{equation}
\mathbb{\bbalpha}_X(U_0)=
\sup\left\{\sigma;\;\limsup_{\rho\rightarrow 0}\sup_{U,V\in B(U_0,\rho)}
\frac{\EE\left[X_U-X_V\right]^2}{\rho^{2\sigma}}<\infty
\right\}
\end{equation}
and the {\em deterministic local H\"older exponent}
\begin{equation}
\widetilde{\mathbb{\bbalpha}}_X(U_0)=
\sup\left\{\sigma;\;\limsup_{\rho\rightarrow 0}\sup_{U,V\in B(U_0,\rho)}
\frac{\EE\left[X_U-X_V\right]^2}{\dA(U,V)^{2\sigma}}<\infty
\right\}.
\end{equation}

\noindent On the space $(\R^N,\|.\|)$, it is shown in \cite{ehjlv} that for all $t_0\in\R^N_{+}$, the pointwise and local H\"older exponents of $X$ at $t_0$ satisfy almost surely
\begin{equation*}
\alphar_X(t_0)=\mathbb{\bbalpha}_X(t_0)\quad\textrm{and}\quad
\widetilde{\alphar}_X(t_0)=\widetilde{\mathbb{\bbalpha}}_X(t_0).
\end{equation*}

In the following sections, several other definitions are studied for H\"older regularity of set-indexed processes. They are connected to the various ways to study the local behaviour of the sample paths of $X$ around a given set $U_0\in\mathcal{A}$.

\subsection{Separability of stochastic processes}
Defining H\"older exponents by expressions (\ref{eq:pointholder}) and (\ref{eq:localholder}) leads us to ask whether they are random variables, in order to consider measurable events related to these quantities. This question was first answered by Doob (see \cite{Doob54}) for linear parameter space, see \cite{davar} for a contemporary exposition.

\begin{definition}[\cite{Doob54}]
A process $\{X_U, U\in\Acal\}$ is said \emph{separable}  if there exist an at most countable collection $\mathcal{S} \subset \Acal$ and a null set $\Lambda$ such that for all closed sets $F\subset \R$ and all open set $\mathcal{O}$ for the topology induced by $\dA$,
\begin{equation*}
\left\{ \omega: X_U(\omega) \in F \textrm{ for all } U\in \mathcal{O}\cap \mathcal{S} \right\} \setminus \left\{ \omega: X_U(\omega) \in F \textrm{ for all } U\in \mathcal{O} \right\} \subset \Lambda
\end{equation*}
\end{definition}

This definition is well suited for set-indexed processes since we have the following:

\begin{theorem}[from {\cite[Theorem 2 p.153]{Gikhman}}]
Any stochastic process from a separable metric space with values in a locally compact space admits a separable modification. In particular, if the sub-collections $(\Acal_n)_{n\in\N}$ and the metric $\dA$ satisfy Assumption~(\HA), any $\R$-valued set-indexed stochastic process $X=\{X_U; \ U\in \Acal\}$ has a separable modification.
\end{theorem}

We shall now consider that all our processes are separable, so that all the random H\"older exponents defined here are indeed random variables.

\subsection{Definition of H\"older exponents on $\Ccal^l$}\label{sec:HoCl}

Following expression (\ref{eq:inc}) for the definition of the increments of a set-indexed process, we consider alternative definitions for H\"older exponents, where the quantities $X_U-X_V$ are substituted with $\Delta X_{U\setminus V}$.

As stated in Section \ref{sec:Cinc-Kolm} and illustrated in Proposition \ref{prop:exBM}, it is not wise to consider $\Delta X_{U\setminus V}$ when $U\in\mathcal{A}$ and $V\in\mathcal{A}(u)$ are close to a given $U_0\in\mathcal{A}$.

\noindent Thus, we fix any integer $l\geq 1$ and set for all $U\in\Acal$ and $\rho>0$,
\begin{equation*}
\mathcal{B}^l(U, \rho) = \left\{\bigcup_{1\leq i\leq l} V_i ;\; V_1,\dots,V_l\in\Acal, 
\ \max_{1\leq i\leq l}\dA(U,V_i)< \rho\right\}.
\end{equation*} 
The pointwise and local H\"older $\mathcal{C}^l$-exponents at $U_0\in\Acal$ are respectively defined as
\begin{equation*}
\alphar_{X,\mathcal{C}^l}(U_0) = \sup \left\{ \alpha: \limsup_{\rho \rightarrow 0} \sup_{\substack{U\in B_{\dA}(U_0,\rho) \\ V \in \mathcal{B}^l(U_0,\rho)}} \frac{|\Delta X_{U\setminus V}|}{\rho^{\alpha}} < \infty \right\},
\end{equation*}
and
\begin{equation*}
\widetilde{\alphar}_{X,\mathcal{C}^l}(U_0) = \sup \left\{ \alpha: \limsup_{\rho \rightarrow 0} \sup_{\substack{U\in B_{\dA}(U_0,\rho) \\ V \in \mathcal{B}^l(U_0,\rho)}} \frac{|\Delta X_{U\setminus V}|}{\dA(U,V)^{\alpha}} < \infty \right\}.
\end{equation*}

The following result shows that the $\mathcal{C}^l$-exponents do not depend on $l$ and, consequently, they provide a definition of H\"older exponents on the class $\mathcal{C}$. Moreover, these exponents can be compared to the exponents defined by (\ref{eq:pointholder}) and (\ref{eq:localholder}).

\begin{proposition}\label{propComparaisonAlphas}
If $\dA$ is a contractive distance, for any $U_0\in\mathcal{A}$, the exponents $\alphar_{X,\mathcal{C}^l}(U_0)$ and $\widetilde{\alphar}_{X,\mathcal{C}^l}(U_0)$ do not depend on the integer $l\geq 1$.
They are denoted by $\alphar_{X,\mathcal{C}}(U_0)$ and $\widetilde{\alphar}_{X,\mathcal{C}}(U_0)$ respectively.

\noindent Moreover, for all $U_0\in\mathcal{A}$ and all $\omega\in\Omega$,
\begin{align*}
\alphar_{X,\mathcal{C}}(U_0)(\omega) \geq \alphar_X(U_0)(\omega) 
\quad\textrm{and}\quad
\widetilde{\alphar}_{X,\mathcal{C}}(U_0)(\omega) \geq \widetilde{\alphar}_X(U_0)(\omega).
\end{align*}
\end{proposition}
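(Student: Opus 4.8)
The statement has two parts: (a) the $\mathcal{C}^l$-exponents do not depend on $l$, and (b) they dominate the corresponding exponents defined via simple increments $X_U - X_V$. I would prove (b) first, as it is the easier half, and then bootstrap it to get (a).

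For part (b), the key observation is that for $U \in B_{\dA}(U_0,\rho)$ and $V = V_1 \in \mathcal{B}^1(U_0,\rho)$ (the case $l=1$), the increment $\Delta X_{U\setminus V}$ is just $X_U - X_{U\cap V}$ by the inclusion-exclusion formula \eqref{eq:inc}. By contractivity of $\dA$, $\dA(U, U\cap V) \leq \dA(U,V) \leq \dA(U,U_0) + \dA(U_0,V) < 2\rho$, and also $U\cap V \in B_{\dA}(U_0, 2\rho)$ since $\dA(U_0, U\cap V) \le \dA(U_0,U) + \dA(U, U\cap V) < 3\rho$ — actually one should be a little careful, but the point is that both $U$ and $U\cap V$ lie in a ball around $U_0$ of radius a fixed multiple of $\rho$. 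Hence any $\alpha$ witnessing finiteness of the $\limsup$ in the definition of $\alphar_X(U_0)$ (resp. $\widetilde{\alphar}_X(U_0)$) also witnesses it for $\alphar_{X,\mathcal{C}^1}(U_0)$ (resp. its local version), after absorbing the factor $2^\alpha$ or $3^\alpha$ into the constant. Taking suprema over admissible $\alpha$ gives the two inequalities, once we know the $\mathcal{C}^l$-exponent equals the $\mathcal{C}^1$-exponent.

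For part (a), I would show that $\alphar_{X,\mathcal{C}^l}(U_0)$ is non-increasing in $l$ (trivially, since $\mathcal{B}^1(U_0,\rho) \subseteq \mathcal{B}^l(U_0,\rho)$ by repeating a set, so the supremum defining $\Delta X$ is over a larger family — wait, one must check $\Delta X_{U\setminus V}$ for $V = V_1 \cup \dots \cup V_l$ genuinely reduces), and then establish the reverse, that it is non-decreasing, i.e. that control at level $1$ forces control at level $l$. The mechanism is the telescoping identity that expresses $\Delta X_{U\setminus(V_1\cup\dots\cup V_l)}$ as a signed sum of $\mathcal{C}^1$-increments: writing $\Delta X_{U\setminus\bigcup_{i\le l}V_i} = X_U - \Delta X_{U\cap\bigcup_{i\ge1}V_i}$ and expanding the inclusion-exclusion sum \eqref{eq:inc}, every term is of the form $\pm X_{U\cap V_{j_1}\cap\dots\cap V_{j_k}}$, and one groups these into a bounded number (depending only on $l$) of differences $X_W - X_{W'}$ with $W \supseteq W'$ both of the form $U \cap (\text{intersection of some }V_i\text{'s})$, hence both in a ball of radius $O(\rho)$ around $U_0$ by contractivity. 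Concretely, $\Delta X_{U\setminus\bigcup_{i\le l}V_i}$ is a linear combination of the $2^l$ quantities $X_{U\cap V_S}$ (with $V_S = \bigcap_{i\in S}V_i$, $V_\emptyset = U$) with coefficients summing appropriately, and this can be reorganized as a sum of at most $2^{l-1}$ simple increments between nested sets in $B_{\dA}(U_0, c_l\rho)$. Each such increment is $O(\rho^\alpha)$ (resp. $O(\dA(\cdot,\cdot)^\alpha)$) for $\alpha < \alphar_X(U_0)$, and one checks $\dA$ of each pair is $\le c_l \dA(U,V)$ by contractivity, so the bound $|\Delta X_{U\setminus V}| \le C_l\,\dA(U,V)^\alpha$ survives. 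This gives $\alphar_{X,\mathcal{C}^l}(U_0) \ge \alphar_{X,\mathcal{C}^1}(U_0)$, and combined with the trivial inequality, equality for all $l$.

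The main obstacle is the bookkeeping in part (a): correctly reorganizing the inclusion-exclusion sum into a controlled number of nested simple increments, and verifying that contractivity indeed bounds $\dA$ of each resulting pair by a constant (depending on $l$) times $\dA(U,V)$, where $V$ here denotes $\bigcup_i V_i$ — in particular one must handle the denominator $\dA(U,V)^\alpha$ in the local exponent, checking that $\dA(U, U\cap V_S) \le \dA(U,V)$ for each $S$ and that no term is artificially large. Once the algebraic identity and these metric estimates are in place, everything else is passing to suprema over $\alpha$ and absorbing constants, which is routine.
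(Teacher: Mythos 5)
Your proposal is correct and follows essentially the same route as the paper: the trivial monotonicity in $l$ from $\mathcal{B}^{l'}(U_0,\rho)\subseteq\mathcal{B}^{l}(U_0,\rho)$, the reverse inequality by expanding the inclusion--exclusion formula into a bounded number of increments between nested sets $U\cap V_S$ kept within $O(\rho)$ of $U_0$ by contractivity, and the comparison with $\alphar_X$ and $\widetilde{\alphar}_X$ via the $l=1$ case where $\Delta X_{U\setminus V}=X_U-X_{U\cap V}$. The paper merely packages your reorganization as $\Delta X_{U\setminus(V_1\cup V_2)}=\Delta X_{U\setminus V_1}+\Delta X_{U\setminus V_2}-\Delta X_{U\setminus(V_1\cap V_2)}$ with $\dA(U_0,V_1\cap V_2)\le 3\rho$, which is the same decomposition.
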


\begin{proof}
We only detail the case of the pointwise exponent. The proof for the local exponent is totally similar. \\
From the definition of the $\mathcal{C}^l$-exponents, since $l\geq l'$ implies $\mathcal{B}^{l'}(U_0, \rho)\subseteq\mathcal{B}^l(U_0, \rho)$, it is clear that
\begin{equation*}
\forall \omega\in\Omega,\forall l\geq l',\quad
\alphar_{X,\mathcal{C}^l}(U_0)(\omega) \leq \alphar_{X,\mathcal{C}^{l'}}(U_0)(\omega).
\end{equation*}
For the sake of readability, we prove the converse inequality for $l=2, l'=1$ (the other cases are very similar). For any $\rho>0$, let $U\in B_{\dA}(U_0, \rho)$, and $V=V_1\cup V_2\in\mathcal{B}^l(U_0,\rho)$ with $V_1,V_2\in\mathcal{A}$. 
From the inclusion-exclusion formula,
\begin{align*}
|\Delta X_{U\setminus V}| &= |X_U - X_{U\cap V_1} - X_{U \cap V_2} + X_{U\cap V_1 \cap V_2}| \\
&= |\Delta X_{U\setminus V_1} + \Delta X_{U\setminus V_2} - \Delta X_{U\setminus (V_1 \cap V_2)}|\\
&\leq |\Delta X_{U\setminus V_1}|+ |\Delta X_{U\setminus V_2}| + |\Delta X_{U\setminus (V_1 \cap V_2)}|.
\end{align*}
We have $\dA(U_0,V_1) \leq \rho$, $\dA(U_0,V_2) \leq \rho$ and 
\begin{align*}
\dA(U_0,V_1 \cap V_2)&\leq\dA(U_0,V_1)+\dA(V_1,V_1\cap V_2) \\
&\leq \dA(U_0,V_1)+\dA(V_1,V_2) \leq 2\dA(U_0,V_1)+\dA(U_0,V_2)\leq 3\rho,
\end{align*}
using $\dA(V_1,V_1\cap V_2)\leq\dA(V_1,V_2)$ from the contracting property of $\dA$.\\
Then, for all $\alpha < \alphar_{X,\mathcal{C}^{l'}}(U_0)(\omega)$, 
$$ \limsup_{\rho \rightarrow 0} \sup_{\substack{U\in B_{\dA}(U_0,\rho) \\ V \in \mathcal{B}^l(U_0,\rho)}} \frac{|\Delta X_{U\setminus V}|}{\rho^{\alpha}} < \infty, $$
which says that $\alpha < \alphar_{X,\mathcal{C}^{l}}(U_0)(\omega)$.
Thus, $\alphar_{X,\mathcal{C}^{l'}}(U_0)(\omega) \leq \alphar_{X,\mathcal{C}^{l}}(U_0)(\omega)$.\\
This inequality shows that $\alphar_{X,\mathcal{C}^{l}}(U_0)(\omega)$ does not depend on the integer $l\geq 1$.

For the second part of the Proposition, it suffices to prove the inequality for $l=1$. This is straightforward, since for a fixed $U\in B_{\dA}(U_0, \rho)$, 
\begin{align*}
\sup_{V \in \mathcal{B}^1(U_0,\rho)} |\Delta X_{U\setminus V}| 
\leq \sup_{W\in B_{\dA}(U_0,\rho)} |X_U - X_W|.
\end{align*}
Hence $\alphar_X(U_0)\leq \alphar_{X,\mathcal{C}^l}(U_0)$. The  inequality for the local exponent can be obtained identically, or one can notice that it is a direct consequence of Corollary \ref{corCl}.

\end{proof}

\begin{remark}
The previous definition of the pointwise H\"older exponent on $\Ccal^l$ is not equivalent to the quantity
\begin{equation*}
\sup \left\{ \alpha: \limsup_{\rho \rightarrow 0} \sup_{\substack{U\in B_{\dA}(U_0,\rho) \\ V \in \mathcal{B}^l:\ \dA(U_0, V) < \rho}} \frac{|\Delta X_{U\setminus V}|}{\rho^{\alpha}} < \infty \right\},
\end{equation*}
since one can verify easily that $\{V \in \mathcal{B}^l:\ d_{\lambda}(U_0, V) < \rho\}\neq \{V\in\mathcal{B}^l(U_0,\rho)\}$.
The problem with the latter is that it is not possible to control the quantity $|X_U - \Delta X_{V_1 \cup V_2}|$ using $|X_U - X_{V_1}|$, $|X_U - X_{V_1}|$ and $|X_U - X_{V_1\cap V_2}|$ as was done in the previous proofs.
\end{remark}

The arguments of the proof of Proposition \ref{propComparaisonAlphas} in the particular case of $l=1$ leads to: for all $\omega$,
\begin{equation*}
\alphar_{X,\Ccal}(U_0)(\omega) \geq \sup\left\{ \alpha:\;\limsup_{\rho\rightarrow 0}
\sup_{\substack{U,V\in B_{\dA}(U_0,\rho) \\ U\subset V}}
\frac{|X_U(\omega)-X_V(\omega)|}{\rho^{\alpha}}<\infty \right\},
\end{equation*}
and
\begin{equation*}
\widetilde{\alphar}_{X,\Ccal}(U_0)(\omega) \geq \sup\left\{ \alpha:\;\limsup_{\rho\rightarrow 0}
\sup_{\substack{U,V\in B_{\dA}(U_0,\rho) \\ U\subset V}}
\frac{|X_U(\omega)-X_V(\omega)|}{\dA(U,V)^{\alpha}}<\infty \right\}.
\end{equation*}
The converse inequalities follow from the fact that the set of $U,V\in B_{\dA}(U_0,\rho)$ with $U\subset V$ is included in the set of $U\in B_{\dA}(U_0,\rho)$ and $V\in \mathcal{B}^1(U_0,\rho)$.
Then, we can state:

\begin{corollary}\label{cor:expC}
If $\dA$ is a contractive distance, 
the pointwise and local H\"older $\mathcal{C}$-exponents at $U_0\in\Acal$ are respectively given by
\begin{equation*}
\alphar_{X,\Ccal}(U_0) = \sup\left\{ \alpha:\;\limsup_{\rho\rightarrow 0}
\sup_{\substack{U,V\in B_{\dA}(U_0,\rho) \\ U\subset V}}
\frac{|X_U-X_V|}{\rho^{\alpha}}<\infty \right\},
\end{equation*}
and
\begin{equation*}
\widetilde{\alphar}_{X,\Ccal}(U_0) = \sup\left\{ \alpha:\;\limsup_{\rho\rightarrow 0}
\sup_{\substack{U,V\in B_{\dA}(U_0,\rho) \\ U\subset V}}
\frac{|X_U-X_V|}{\dA(U,V)^{\alpha}}<\infty \right\}.
\end{equation*}
\end{corollary}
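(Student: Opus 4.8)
The plan is to derive Corollary \ref{cor:expC} directly from Proposition \ref{propComparaisonAlphas} together with the two inequalities displayed just before the statement. Since Proposition \ref{propComparaisonAlphas} tells us that $\alphar_{X,\Ccal^l}(U_0)$ is independent of $l$, it suffices to identify $\alphar_{X,\Ccal}(U_0)=\alphar_{X,\Ccal^1}(U_0)$ with the supremum over ordered pairs $U\subset V$ in $B_{\dA}(U_0,\rho)$, and similarly for the local exponent; this is precisely the content of the two displayed inequalities, so the proof is essentially a matter of assembling them into a clean equality.

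The first (lower bound) inequality was already justified in the text right after the remark: for $l=1$, one writes $|\Delta X_{U\setminus V}|=|X_U-X_{U\cap V}|$, and since $V\in\mathcal{B}^1(U_0,\rho)$ gives $\dA(U_0,V)<\rho$, contractivity yields $\dA(U_0,U\cap V)\leq\dA(U_0,V)+\dA(V,U\cap V)\leq\dA(U_0,V)+\dA(V,U)\leq 3\rho$ (or one simply notes $U\cap V\in B_{\dA}(U_0,3\rho)$, and replacing $\rho$ by $3\rho$ in a $\limsup$ as $\rho\to 0$ changes nothing since a scaling by a constant is absorbed). Hence any pair $(U,V)$ with $U\in B_{\dA}(U_0,\rho)$, $V\in\mathcal{B}^1(U_0,\rho)$ produces an ordered pair $(U\cap V\text{ or }U,\ \text{the larger one})$ inside a slightly larger ball, so the supremum over ordered pairs dominates the $\Ccal^1$-supremum, giving
\begin{equation*}
\alphar_{X,\Ccal}(U_0)(\omega) \geq \sup\left\{ \alpha:\;\limsup_{\rho\rightarrow 0}
\sup_{\substack{U,V\in B_{\dA}(U_0,\rho) \\ U\subset V}}
\frac{|X_U(\omega)-X_V(\omega)|}{\rho^{\alpha}}<\infty \right\}.
\end{equation*}

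For the converse, I would argue as the text indicates: if $U,V\in B_{\dA}(U_0,\rho)$ with $U\subset V$, then taking the single set $V$ itself (i.e. $l=1$ with $V_1=V$) gives $V\in\mathcal{B}^1(U_0,\rho)$ and $\Delta X_{U\setminus V}=X_U-X_{U\cap V}=X_U-X_U$... no — rather, one takes $U$ as the ``big'' set and $V$ as the union: since $U\subset V$ one has $U\setminus V=\emptyset$, so instead one considers $\Delta X_{V\setminus U}$? The cleaner route, and the one the paper uses, is simply that the collection of ordered pairs $\{(U,V): U\subset V,\ U,V\in B_{\dA}(U_0,\rho)\}$ embeds into $\{(U,W): U\in B_{\dA}(U_0,\rho),\ W\in\mathcal{B}^1(U_0,\rho)\}$ via $W=V$ (with $\Delta X_{U\setminus V}$ replaced by $X_U-X_V$ up to the sign, using $U\cap V=U$), so the $\Ccal^1$-supremum dominates the ordered-pair supremum, yielding the reverse inequality. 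Combining the two gives equality for the pointwise exponent; the argument for $\widetilde{\alphar}$ is identical word for word after replacing $\rho^\alpha$ by $\dA(U,V)^\alpha$ in every supremum, and using Corollary \ref{corCl} for the lower bound as noted in the proof of Proposition \ref{propComparaisonAlphas}. The only point requiring a little care — and the place where I expect the ``obstacle'' to lie — is the bookkeeping of the constant-factor dilations of the radius $\rho$ (passing between $B(U_0,\rho)$ and $B(U_0,3\rho)$), but since every exponent is defined through a $\limsup_{\rho\to0}$ of a ratio with $\rho^\alpha$ in the denominator, such a dilation multiplies the ratio by a bounded constant $3^{-\alpha}$ and hence does not affect finiteness; I would state this once and invoke it in both halves.
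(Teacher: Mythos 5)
Your overall strategy is exactly the paper's: the corollary is obtained by sandwiching $\alphar_{X,\Ccal}(U_0)=\alphar_{X,\Ccal^1}(U_0)$ between the ordered-pair supremum from both sides, and your lower-bound half (mapping a pair $U\in B_{\dA}(U_0,\rho)$, $V\in\mathcal{B}^1(U_0,\rho)$ to the ordered pair $U\cap V\subset U$ inside $B_{\dA}(U_0,3\rho)$ via contractivity, and absorbing the dilation of the radius in the $\limsup$) is correct and matches the text.

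The converse half, however, is wrong as finally written. You correctly noticed mid-argument that for $U\subset V$ the assignment $W=V$ gives $\Delta X_{U\setminus W}=X_U-X_{U\cap V}=X_U-X_U=0$, but the sentence you then commit to reinstates precisely that assignment (``via $W=V$ \dots using $U\cap V=U$''), together with the unjustified parenthetical that $\Delta X_{U\setminus V}$ may be ``replaced by $X_U-X_V$ up to the sign.'' There is no such replacement: the $\Ccal^1$-supremum ranges over the specific quantities $\Delta X_{A\setminus B}=X_A-X_{A\cap B}$, and with $A=U$, $B=V$ this quantity is identically zero, so the embedding you describe shows nothing. The correct identification — which you wrote down with a question mark and then abandoned — swaps the roles: send the ordered pair $(U,V)$ with $U\subset V$, $U,V\in B_{\dA}(U_0,\rho)$, to the $\Ccal^1$-pair whose ``big'' set is $V\in B_{\dA}(U_0,\rho)$ and whose removed set is $U\in\mathcal{B}^1(U_0,\rho)$, so that $\Delta X_{V\setminus U}=X_V-X_{V\cap U}=X_V-X_U$ and $|X_U-X_V|=|\Delta X_{V\setminus U}|$ with no change of radius at all. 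This is what the paper's inclusion statement means. With that one-line correction the argument is complete; the remarks on the local exponent and on the harmlessness of constant dilations of $\rho$ are fine.
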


\

\subsection{Pointwise continuity}\label{sec:Hopc}

In \cite{ehem3}, a weak form of continuity is considered in the study of set-indexed Poisson process, set-indexed Brownian motion and more generally set-indexed L\'evy processes. 
In particular, the sample paths of the set-indexed Brownian motion are proved to be pointwise continuous as a set-indexed L\'evy process with Gaussian increments.
Notice that such a property does not require Assumption~(\HA) on $\Acal$.
We recall the following definitions:

\begin{definition}[\cite{ehem3}]
The \emph{point mass jump} of a set-indexed function $x: \Acal \rightarrow \R$ at $t\in \mathcal{T}$ is defined by
\begin{equation}\label{eq:Jtx}
J_t(x) = \lim_{n\rightarrow \infty} \Delta x_{C_n(t)}, \ \ \textrm{where } \ C_n(t) = \bigcap_{\substack{C\in \Ccal_n \\ t\in C}} C
\end{equation}
and for each $n\geq 1$, $\mathcal{C}_n$ denotes the collection of subsets $U\setminus V$ with $U\in\mathcal{A}_n$ and $V\in\mathcal{A}_n(u)$.
\end{definition}

\begin{definition}[\cite{ehem3}]
A set-indexed function $x: \Acal \rightarrow \R$ is said \emph{pointwise continuous} at $t\in \mathcal{T}$ if $J_t(x) = 0$.
\end{definition}


Let us recall that a subset $\mathcal{A}'$ of $\mathcal{A}$ which is closed under
arbitrary intersections is called a {\em lower sub-semilattice} of $\mathcal{A}$. 
The ordering of a lower sub-semilattice $\mathcal{A}'=\{A_1,A_2, \dots\}$ is said to be {\em consistent} if $A_i\subset A_j \Rightarrow i\leq j$. 
Proceeding inductively, we can show that any lower sub-semilattice admits a consistent ordering, which is not unique in general (see \cite{Ivanoff}). 
If $\{A_1,\dots,A_n\}$ is a consistent ordering of a finite lower sub-semilattice $\mathcal{A}'$, the set $C_i = A_i\setminus \bigcup_{j\leq i-1} A_j$ is called {\em the left neighbourhood} of $A_i$ in $\mathcal{A}'$. Since $C_i = A_i \setminus \bigcup_{A\in\mathcal{A}', A\nsubseteq A_i} A$, the definition of the left neighbourhood does not depend on the ordering.

\begin{proposition}\label{prop:crit-pc-unif}
Let $X=\left\{ X_U ;\; U\in\mathcal{A} \right\}$ be a set-indexed process and let $U_{\max}$ be a subset in $\Acal$ such that $m(U_{\max})<+\infty$ and assume that there exist $p>0$, $q>1$, $N\geq 1$ and $K>0$ such that for all $t\in U_{\max}$ and all $n\geq N$,
\begin{equation}
\label{eq:hypoProcess3}
\EE\big[|\Delta X_{C_n(t)}|^p\big] \leq K\ m(C_n(t))^{q} \ .
\end{equation}
Then, for any $\gamma\in (0,(q-1)/p)$, there exists an increasing function $\varphi:\N\rightarrow\N$ and a random variable $n^* \geq 1$ satisfying, with probability one,
\begin{equation*}
\forall t\in U_{\max}, \ \forall n\geq n^*, \quad 
|\Delta X_{C_{\varphi(n)}(t)}| \leq m(C_{\varphi(n)}(t))^{\gamma}.
\end{equation*}
\end{proposition}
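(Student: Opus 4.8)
The plan is to mimic the Borel–Cantelli scheme used in the proof of Theorem~\ref{kolmth}, but applied directly to the quantities $\Delta X_{C_n(t)}$ indexed by $t\in U_{\max}$, and to extract the subsequence $\varphi$ so that the bad events are summable. The crucial combinatorial input is that, although $t$ ranges over the uncountable set $U_{\max}$, for each fixed $n$ the set $C_n(t)$ takes only finitely many values as $t$ varies: indeed $C_n(t) = \bigcap_{C\in\Ccal_n,\, t\in C} C$ is always a left neighbourhood of some $A\in\Acal_n$ in the finite semilattice generated by $\Acal_n$, so the number of distinct sets $\{C_n(t):t\in U_{\max}\}$ is at most $\#\Acal_n = k_n$ (in fact at most the number of left neighbourhoods, which is $\le k_n$). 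So a union bound over $t$ becomes a union bound over at most $k_n$ events.

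First I would fix $\gamma\in(0,(q-1)/p)$ and pick $\varepsilon>0$ small enough that $q - 1 - p\gamma(1+\varepsilon) > 0$; this will be used to absorb the polynomial loss coming from comparing $m(C_{\varphi(n)}(t))^{\gamma}$ across one scale. For each $n$, let $\mathcal{L}_n$ denote the (finite) collection of distinct sets $C_n(t)$, $t\in U_{\max}$, so $\#\mathcal{L}_n\le k_n$. For a threshold $r_n>0$ to be chosen, Markov's inequality and hypothesis~\eqref{eq:hypoProcess3} give
\begin{align*}
\PP\Big(\exists t\in U_{\max}:\ |\Delta X_{C_n(t)}| \ge r_n\Big)
&\le \sum_{C\in\mathcal{L}_n} \PP\big(|\Delta X_C|\ge r_n\big)
\le k_n\, r_n^{-p}\, \sup_{t\in U_{\max}} \EE\big[|\Delta X_{C_n(t)}|^p\big] \\
&\le K\, k_n\, r_n^{-p}\, \sup_{t\in U_{\max}} m(C_n(t))^{q}.
\end{align*}
Here one needs an upper bound on $\sup_t m(C_n(t))$ in terms of $k_n$: since $C_n(t)\subseteq g_n(\{t\})\setminus$(something) and more simply $m(C_n(t))\le m(A\setminus g_n^{-}(A))$ for the relevant $A\in\Acal_n$, Assumption~(\HA), specifically \eqref{eq:assump1} together with the fact that $\dA$ (taken to be $d_m$) dominates $m(C_n(t))$, yields $\sup_t m(C_n(t)) \le M_1^{\qA} k_n^{-1}$ up to a constant; call this bound $c\, k_n^{-1/?}$—more precisely, using $d_m$, $m(C_n(t))\le \dA(A, g_n^-(A))^{?}$. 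The clean way is: the statement only needs a subsequence, so choose $r_n = m$-scale at level $\varphi(n)$: set $r_n = \big(\sup_{t}m(C_{\varphi(n)}(t))\big)^{\gamma}$ and choose $\varphi$ growing fast enough (this is where admissibility/(H1) enters) that $k_{\varphi(n)}\,\big(\sup_t m(C_{\varphi(n)}(t))\big)^{q-p\gamma}$ is summable in $n$. Since $q - p\gamma > 1 > 0$ and $\sup_t m(C_m(t))\to 0$ while $k_m$ grows only polynomially in $(\sup_t m(C_m(t)))^{-1}$ under (H1), a sufficiently sparse $\varphi$ makes this series converge; admissibility of $(k_n)$ guarantees such a $\varphi$ exists.

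With summability in hand, Borel–Cantelli produces a full-probability event $\Omega^*$ and a finite random variable $n^*\ge 1$ such that on $\Omega^*$, for all $n\ge n^*$ and all $t\in U_{\max}$, $|\Delta X_{C_{\varphi(n)}(t)}| < r_n = \big(\sup_{s}m(C_{\varphi(n)}(s))\big)^{\gamma}$. The final step is to replace the supremum $\sup_s m(C_{\varphi(n)}(s))$ by $m(C_{\varphi(n)}(t))$ itself. This is the one genuinely delicate point: a priori $m(C_{\varphi(n)}(t))$ can be much smaller than the sup, and then the bound is too weak. To fix it, I would instead run the Borel–Cantelli argument at \emph{each} dyadic-type level of $m(C_n(t))$ separately: partition $\mathcal{L}_n$ according to $m(C)\in[2^{-(j+1)},2^{-j})$, apply the union bound within each band with threshold $2^{-j\gamma}$, and sum over $j$ as well as $n$; the extra factor from summing over $j\le O(\log k_n)$ is harmless against the polynomial decay. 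This yields $|\Delta X_{C_{\varphi(n)}(t)}| \le m(C_{\varphi(n)}(t))^{\gamma}$ for every $t$, with the constant $1$ absorbed by enlarging $n^*$ (or a harmless constant, which can be normalised away by adjusting $\gamma$ slightly, as in Theorem~\ref{kolmth}). I expect this banding/level-set refinement to be the main obstacle; everything else is a routine repetition of the Kolmogorov-type estimate, with the key structural fact being the finiteness ($\le k_n$) of $\{C_n(t):t\in U_{\max}\}$ coming from the left-neighbourhood description.
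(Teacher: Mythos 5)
Your overall Borel--Cantelli scheme and the key structural observation (for fixed $n$, the sets $C_n(t)$, $t\in U_{\max}$, form a finite family, namely the left neighbourhoods of $\Acal_n$) match the paper's proof. But the way you close the union bound has a genuine gap. You count the bad events by $\#\{C_n(t):t\in U_{\max}\}\leq k_n$ and then argue that the resulting factor $k_n$ is harmless because, ``under (H1), $k_n$ grows only polynomially in $(\sup_t m(C_n(t)))^{-1}$.'' That relation is not established anywhere and does not follow from Assumption (\HA): condition \eqref{eq:assump1} is an \emph{upper} bound on $\dA(U,g_n(U))$, and it gives no lower bound on the measures $m(C_n(t))$ of left neighbourhoods (which are differences of unions, not of the form $U\setminus g_n(U)$), hence no control of $k_n$ in terms of $\sup_t m(C_n(t))$. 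Without such a relation, a bound of the shape $k_n\,(\sup_t m(C_n(t)))^{q-\gamma p}$ need not tend to $0$ even along a subsequence. The same problem undermines your banding fix: within the band $m(C)\in[2^{-(j+1)},2^{-j})$ you again count the sets by $k_n$ (or by $O(\log k_n)$ bands times $k_n$), which is not summable for the same reason.

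The missing ingredient is the one fact you mention but never use: the left neighbourhoods $\{C_n(t):t\in U_{\max}\}$ are pairwise \emph{disjoint} and contained in $U_{\max}$, so $\sum_{C} m(C)\leq m(U_{\max})$. With the per-set threshold $m(C)^{\gamma}$ (rather than your initial $(\sup_t m(C_{\varphi(n)}(t)))^{\gamma}$, which you correctly identify as too weak), Markov's inequality gives
\begin{equation*}
\PP\Big(\exists t:\ |\Delta X_{C_n(t)}|\geq m(C_n(t))^{\gamma}\Big)
\leq K\sum_{C} m(C)^{q-\gamma p}
\leq K\Big(\sum_{C} m(C)\Big)\sup_{C} m(C)^{q-\gamma p-1}
\leq K\,m(U_{\max})\sup_{C} m(C)^{q-\gamma p-1},
\end{equation*}
which uses only $q-\gamma p>1$ and tends to $0$; an extraction $\varphi$ then makes it summable and Borel--Cantelli concludes, with no banding, no appeal to $k_n$, and no use of Assumption (\HA) at all (the paper explicitly remarks that this proposition does not require it). Your band-by-band count can be repaired by the same disjointness ($\#\{C:m(C)\geq 2^{-(j+1)}\}\leq 2^{j+1}m(U_{\max})$), but as written the summability of the bad-event probabilities is not proved.
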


\begin{proof}
Up to restricting the indexing collection to $\{U\cap U_{max}, U\in \Acal\}$, we assume in this proof that the indexing collection $\Acal$ is included in $U_{\max}$. \\
For all $0<\gamma<\displaystyle\frac{q-1}{p}$, we consider $\displaystyle S_n = \sup\left\{\frac{|\Delta X_{C_n(t)}|}{m(C_n(t))^{\gamma}};\; t\in U_{\max} \right\}$, where $C_n(t)$ is defined in (\ref{eq:Jtx}). 
When $t$ ranges $U_{\max}$, the subset $C_n(t)$ ranges $\Ccal^l(\Acal_n)$, the collection of the disjoint left-neighbourhoods of $\Acal_n$. 
Consequently we can write $\displaystyle S_n = \sup\left\{\frac{|\Delta X_{C}|}{m(C)^{\gamma}};\; C\in\Ccal^l(\Acal_n) \right\}$.

\noindent For any integer $p\geq 1$, we have
\begin{align*}
\PP(S_n \geq 1) &\leq \sum_{C\in \Ccal^l(\Acal_n)} \PP(|\Delta X_C| \geq m(C)^{\gamma})\\
&\leq \sum_{C\in \Ccal^l(\Acal_n)} \frac{\EE\left[|\Delta X_C|^{p}\right]}{m(C)^{\gamma p}}
\leq K\ \sum_{C\in \Ccal^l(\Acal_n)} m(C)^{q-\gamma p} \ .
\end{align*}

\noindent Since $q-\gamma p>1$, we have
\begin{align*}
\PP(S_n \geq 1) &\leq K\ \left(\sum_{C\in \Ccal^l(\Acal_n)} m(C) \right) \sup_{C\in \Ccal^l(\Acal_n)} \left\{m(C)^{q-\gamma p-1}\right\}\\
&\leq K \ m(U_{max}) \sup_{C\in \Ccal^l(\Acal_n)} \left\{m(C)^{q-\gamma p-1}\right\}
\end{align*}
where the fact that the $C\in \Ccal^l(\Acal_n)$ are disjoint is used. Up to choosing an extraction $\varphi$ for the sequence $u_n = \sup_{C\in \Ccal^l(\Acal_n)} \left\{m(C)^{q-\gamma p-1}\right\}$, we can assume that $u_n$ is summable.
Hence the Borel-Cantelli Lemma implies that for $0<\gamma < (q-1)/p$, $\{S_{\varphi(n)} < 1\}$ happens infinitely often, which gives the result.
\end{proof}

Note again that Proposition \ref{prop:crit-pc-unif} does not require Assumption (\HA). It leads naturally to another definition of the  local H\"older regularity of a set-indexed process:

\begin{definition}
The \emph{pointwise continuity H\"older exponent} at any $t\in \mathcal{T}$ is defined by
\begin{equation*}
\alphar_X^{pc}(t) = \sup \left\{ \alpha: \ \limsup_{n\rightarrow \infty} \frac{|\Delta X_{C_n(t)}|}{m(C_n(t))^{\alpha}} < \infty \right\}.
\end{equation*}
\end{definition}
\noindent According to Proposition \ref{prop:crit-pc-unif}, if $X$ is a $\Acal$-indexed process satisfying hypothesis (\ref{eq:hypoProcess3}), then with probability one, $\alphar_X^{pc}(t) \geq (q-1)/p$ for all $t\in U_{max}$.

\begin{remark}
As in the continuity criterion (Theorem \ref{kolmth} and Corollary \ref{GaussKolm}), the proof of Proposition \ref{prop:crit-pc-unif} can be improved for $\gamma \in (0,(kq-1)/kp)$ for any $k\in \N$, when the process is Gaussian. In that specific case, the upper bound for admissible values of $\gamma$ is $q/p$ (instead of $(q-1)/p$).
\end{remark}

\

\section{Connection with H\"older exponents of projections on flows}\label{sec:flow}

In this section, we consider the concept of \emph{flow}, which is a useful tool to reduce characterization or convergence problems to a one-dimensional issue. \emph{Flows} have been used to characterize: strong martingales \cite{Ivanoff}, set-Markov processes \cite{BaIv02}, set-indexed fractional Brownian motion \cite{ehem2} and set-indexed L\'evy processes \cite{ehem3}.

\begin{definition}[\cite{Ivanoff}]\label{flowdef}
An {\em elementary flow} is defined to be a continuous increasing function $f:[a,b]\subset\mathbf{R}_+\rightarrow\mathcal{A}$, i.e. such that
\begin{align*}
\forall s,t\in [a,b];\quad & s<t \Rightarrow f(s)\subseteq f(t)\\
\forall s\in [a,b);\quad & f(s)=\bigcap_{v>s}f(v)\\
\forall s\in (a,b);\quad & f(s)=\overline{\bigcup_{u<s}f(u)}.
\end{align*}

A {\em simple flow} is a continuous function $f:[a,b]\rightarrow\mathcal{A}(u)$ such that there exists a finite sequence $(t_0,t_1,\dots,t_n)$ with $a=t_0<t_1<\dots<t_n=b$ and elementary flows $f_i:[t_{i-1},t_i]\rightarrow\mathcal{A}$ ($i=1,\dots,n$) such that
\begin{equation*}
\forall s\in [t_{i-1},t_i];\quad
f(s)=f_i(s)\cup \bigcup_{j=1}^{i-1}f_j(t_j).
\end{equation*}
The set of all simple (resp. elementary) flows is denoted $S(\mathcal{A})$ (resp. $S^e(\mathcal{A})$).
\end{definition}

According to \cite{ehem2}, we use the parametrization of flows which allows to preserve the increment stationarity property under projection on flows (it avoids the appearance of a time-change).

\begin{definition}[\cite{ehem2}]
For any set-indexed process $X=\left\{X_U;\;U\in\mathcal{A}\right\}$ on the space $(\mathcal{T},\mathcal{A},m)$ and any simple flow $f:[a,b]\rightarrow\mathcal{A}(u)$, the {\em $m$-standard projection of $X$ on $f$} is defined as the process $$X^{f,m}=\left\{X^{f,m}_t=\Delta X_{f\circ\theta^{-1}(t)};\;t\in \theta([a,b])\right\},$$
where $\theta$ is the function $t\mapsto m[f(t)]$ and $\theta^{-1}$ its right inverse.
\end{definition}

The importance of flows in the study of set-indexed processes follows the fact that the finite dimensional distributions of an additive $\mathcal{A}$-indexed process $X$ determine and are determined by the finite dimensional distributions of the class $\{X^{f,m},\ f\in S(\mathcal A)\}$ (\cite{Ivanoff}).

As the projection of a set-indexed process on any flow is a real-parameter
process, its classical H\"older exponents can be considered and compared to the
exponents of the set-indexed process.
In the sequel, we study how regularity of flows connects the exponents $\alphar_X(U_0)$ (resp. $\widetilde{\alphar}_X(U_0)$) and $\alphar_{X^{f,m}}(t_0)$ (resp. $\widetilde{\alphar}_{X^{f,m}}(t_0)$), when $U_0\in\Acal$ and $f\circ\theta^{-1}(t_0)=U_0$.

\

For any $U_0 \in \mathcal{A}$, let us denote by $S(\Acal,U_0)$ the subset of $S(\Acal)$ containing all the simple flows $f:\theta^{-1}(I_f) \rightarrow \Acal(u)$ such that there exists $t_0 > 0$ satisfying $f\circ \theta^{-1}(t_0) = U_0$, and where $I_f$ is a closed interval of $\R_+$ containing a ball centered in $t_0$. Such a $t_0$ does not depend on the flow $f$, since $t_0 = m(U_0)$. 
In the same way, we define $S^e(\Acal, U_0)$ for elementary flows.

\begin{lemma}\label{lem:flow}
Let $f\in S(\Acal,U_0)$ and $\eta >0$ such that $B(t_0,\eta) \subset I_f$. 
For all $t\in B(t_0,\eta)$, $f\circ \theta^{-1}(t) \in B^{(u)}_{d_m}(U_0,\eta) = \{ A \in \Acal(u): m(A \bigtriangleup U_0) < \eta \}$. 
\end{lemma}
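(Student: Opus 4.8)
The plan is to unwind the definitions. Fix $f\in S(\Acal, U_0)$ and $\eta>0$ with $B(t_0,\eta)\subset I_f$, where $t_0 = m(U_0)$ as noted before the lemma. Take $t\in B(t_0,\eta)$, so $|t - t_0| < \eta$, and set $A = f\circ\theta^{-1}(t) \in \Acal(u)$. Since $\theta$ is the map $s\mapsto m[f(s)]$ and $\theta^{-1}$ is its right inverse, we have $\theta(\theta^{-1}(t)) = t$, i.e. $m[f\circ\theta^{-1}(t)] = t$; likewise $m[f\circ\theta^{-1}(t_0)] = t_0 = m(U_0)$. So $m(A) = t$ and $m(U_0) = t_0$. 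The goal $m(A\bigtriangleup U_0) < \eta$ should then follow from $|m(A) - m(U_0)| = |t - t_0| < \eta$ together with a monotonicity (nestedness) argument.

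The key structural point to exploit is that $f$ is increasing: for $s\le s'$, $f(s)\subseteq f(s')$ (for a simple flow this holds componentwise along the elementary pieces, so after taking the union with the earlier fixed pieces one still gets inclusion in the appropriate range; I would treat the elementary case first and then note the simple case reduces to it since $\theta^{-1}(t)$ and $\theta^{-1}(t_0)$ lie on a common monotone branch near $t_0$). Hence either $A\subseteq U_0$ or $U_0\subseteq A$, depending on whether $\theta^{-1}(t) \le \theta^{-1}(t_0)$ or the reverse. In either case $A\bigtriangleup U_0$ is the ``annular'' set $U_0\setminus A$ or $A\setminus U_0$, and by additivity of the measure $m$,
\[
m(A\bigtriangleup U_0) = |m(A) - m(U_0)| = |t - t_0| < \eta .
\]
This gives $A\in B^{(u)}_{d_m}(U_0,\eta)$, which is exactly the claim.

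The main obstacle I anticipate is purely bookkeeping rather than conceptual: one must be careful that $\theta^{-1}$ is only a right inverse (the measure $\theta$ need not be strictly increasing, so $\theta^{-1}$ may jump), and that for a \emph{simple} flow the relevant piece of $f$ near $t_0$ is an elementary flow only on a subinterval, with earlier elementary pieces frozen as a fixed union. I would handle this by choosing $\eta$ (or shrinking it if necessary, which is harmless since the statement is for a \emph{given} such $\eta$, so really one just notes that $B(t_0,\eta)\subset I_f$ already keeps us inside a single monotone branch up to the fixed frozen part) so that both $\theta^{-1}(t)$ and $\theta^{-1}(t_0)$ fall in the same interval $[t_{i-1},t_i]$ on which $f$ agrees with $f_i(\cdot)\cup\bigcup_{j<i}f_j(t_j)$; then the frozen union cancels in the symmetric difference and one is back to the elementary, strictly nested case where the displayed identity is immediate.
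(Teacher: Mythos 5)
Your proposal is correct and follows essentially the same route as the paper: both arguments use that $\theta^{-1}$ is increasing, so that $f\circ\theta^{-1}(t)$ and $U_0=f\circ\theta^{-1}(t_0)$ are nested, whence $m\bigl(f\circ\theta^{-1}(t)\bigtriangleup U_0\bigr)=|m(f\circ\theta^{-1}(t))-m(U_0)|=|t-t_0|<\eta$. The extra bookkeeping you flag (right inverse, frozen elementary pieces of a simple flow) is handled implicitly in the paper by the same observations you make, so there is no substantive difference.
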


\begin{proof}
$\theta^{-1}(t) = \inf \{x\in I_f: \theta(x) \geq t\}$. As $\theta$ is increasing, $\theta^{-1}$ is increasing as well. We assume without loss of generality that $t\geq t_0$. Then,
\begin{eqnarray*}
d_m(f\circ \theta^{-1}(t),U_0) &=& m\left(f\circ \theta^{-1}(t)\bigtriangleup f\circ \theta^{-1}(t_0)\right) \\
&=& m\left(f\circ \theta^{-1}(t) \setminus f\circ \theta^{-1}(t_0)\right) \\
&=& m(f\circ \theta^{-1}(t)) - m(f\circ \theta^{-1}(t_0))\\
&=& t-t_0.
\end{eqnarray*}
\end{proof}

\noindent Using Lemma \ref{lem:flow}, we can compare the H\"older regularity of $X$ and the H\"older regularity of its projections on flows.

\begin{proposition}
Let $X=\{X_U;\;U\in\Acal\}$ be a set-indexed process on $(\mathcal{T},\mathcal{A},m)$, with finite H\"older exponents at $U_0 \in \Acal$. Then,
\begin{align*}
\inf_{f\in S^e(\Acal,U_0)} \alphar_{X^{f,m}}(t_0) = \alphar_{X,\Ccal}(U_0) \geq \alphar_X(U_0) \quad\textrm{a.s.}\\
\inf_{f\in S^e(\Acal,U_0)} \widetilde{\alphar}_{X^{f,m}}(t_0) = \widetilde{\alphar}_{X,\Ccal}(U_0) \geq \widetilde{\alphar}_X(U_0) \quad\textrm{a.s.}
\end{align*}
where the metric considered on $\Acal$ is $d_m$.
\end{proposition}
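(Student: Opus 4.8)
The plan is to prove the two displayed equalities (the inequalities on the right being already furnished by Proposition \ref{propComparaisonAlphas}) by a double inequality. I treat the pointwise case; the local case is identical after replacing $\rho^\alpha$ by $d_m(U,V)^\alpha$ throughout. First I would establish $\inf_{f\in S^e(\Acal,U_0)} \alphar_{X^{f,m}}(t_0) \geq \alphar_{X,\Ccal}(U_0)$: fix an elementary flow $f\in S^e(\Acal,U_0)$ and recall that $X^{f,m}_t = \Delta X_{f\circ\theta^{-1}(t)}$ with $\theta(x)=m[f(x)]$; for $s,t\in B(t_0,\rho)$ with, say, $s\le t$, the increment $X^{f,m}_t - X^{f,m}_s$ equals $\Delta X_{f\circ\theta^{-1}(t)} - \Delta X_{f\circ\theta^{-1}(s)}$, and since $f$ is increasing this is $X_{f\circ\theta^{-1}(t)} - X_{f\circ\theta^{-1}(s)}$ with $f\circ\theta^{-1}(s)\subset f\circ\theta^{-1}(t)$, i.e. a difference $X_U - X_V$ with $U\subset V$. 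By Lemma \ref{lem:flow}, both $f\circ\theta^{-1}(s)$ and $f\circ\theta^{-1}(t)$ lie in $B_{d_m}(U_0,\rho)$, and moreover $d_m(f\circ\theta^{-1}(s),f\circ\theta^{-1}(t)) = |t-s|$ by the computation in that lemma. Hence every ratio $|X^{f,m}_t - X^{f,m}_s|/\rho^\alpha$ appearing in the definition of $\alphar_{X^{f,m}}(t_0)$ is dominated by a ratio of the form appearing in the characterization of $\alphar_{X,\Ccal}(U_0)$ given by Corollary \ref{cor:expC}; taking $\limsup$ as $\rho\to 0$ gives $\alphar_{X^{f,m}}(t_0)\ge \alphar_{X,\Ccal}(U_0)$ for each $f$, hence the same for the infimum.

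For the reverse inequality $\inf_{f\in S^e(\Acal,U_0)} \alphar_{X^{f,m}}(t_0) \leq \alphar_{X,\Ccal}(U_0)$, I would use the characterization of Corollary \ref{cor:expC}: given $\alpha > \inf_f \alphar_{X^{f,m}}(t_0)$ we must produce ordered pairs $U\subset V$ in $B_{d_m}(U_0,\rho)$ realizing an unbounded ratio, and conversely any sequence of ordered pairs $U_k\subset V_k$ shrinking to $U_0$ should be captured by a single elementary flow. The key geometric fact is that for $U_0\in\Acal$ and any $U\subset V$ in a $d_m$-ball around $U_0$ with $U\subseteq U_0\subseteq V$ (the generic configuration, reducing the general ordered pair to this by splitting through $U\cap U_0$ and $U_0\cup V$ — one must check these are still in $\Acal$, using the stability properties in Definition \ref{basic}), there is an elementary flow through $U_0$ passing arbitrarily close to both $U$ and $V$: concretely, one builds an increasing family interpolating $\emptyset\subset\dots\subset U\subset\dots\subset U_0\subset\dots\subset V\subset\dots$ using condition (2) of Definition \ref{basic} (closure under increasing unions and the structure of $\Acal$) to fill in a continuous increasing path, then reparametrizes by $\theta$. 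Along this flow, the discrete increment $X_V-X_U=X^{f,m}_{\theta(V)}-X^{f,m}_{\theta(U)}$ with $|\theta(V)-\theta(U)| = d_m(U,V)$ and both times in $B(t_0,\rho)$, so the flow exponent at $t_0$ is at most the $\Ccal$-exponent. Taking the infimum over flows then yields $\le$.

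The main obstacle I anticipate is the flow-construction step: showing that for \emph{any} prescribed ordered pair $U\subset V$ near $U_0$ one can route an elementary flow through $U_0$ that passes within $o(\rho)$ of both $U$ and $V$ while remaining a legitimate elementary flow (continuous, increasing, left- and right-continuous in the senses of Definition \ref{flowdef}). This requires a careful interpolation argument inside $\Acal$ — essentially a one-dimensional "connecting path" lemma in the semilattice $(\Acal,\subseteq)$ — and is where the connectedness and stability axioms of the indexing collection do the real work; the rest of the argument is bookkeeping with Lemma \ref{lem:flow} and Corollary \ref{cor:expC}. A subsidiary point to be careful about is that the infimum over elementary flows is genuinely attained in the limiting sense (i.e. the flow exponent can be pushed all the way down to $\alphar_{X,\Ccal}(U_0)$ and not merely to something larger), which is why one needs the flows to approximate \emph{all} ordered pairs, uniformly as $\rho\to 0$; handling the almost-sure quantifier (the exponents are random, and "a.s." must hold simultaneously for the equality) requires noting that $\alphar_{X,\Ccal}(U_0)$ is already a pathwise quantity by Corollary \ref{cor:expC}, so both sides are defined $\omega$ by $\omega$ and the inequalities above hold for every fixed $\omega$, giving the stated a.s. (in fact sure) identity.
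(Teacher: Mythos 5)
Your overall strategy coincides with the paper's: the right-hand inequalities are quoted from Proposition \ref{propComparaisonAlphas}, and the equality is reduced to Corollary \ref{cor:expC} together with Lemma \ref{lem:flow}. The first half of your argument --- that each elementary flow through $U_0$ only produces increments $X_{f\circ\theta^{-1}(t)}-X_{f\circ\theta^{-1}(s)}$ between nested sets lying in $B_{d_m}(U_0,\rho)$ with $d_m$-distance $|t-s|$, whence $\alphar_{X^{f,m}}(t_0)\geq\alphar_{X,\Ccal}(U_0)$ for every $f$ and hence for the infimum --- is correct and is exactly what the paper's citation of Lemma \ref{lem:flow} and Corollary \ref{cor:expC} amounts to.

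The gap is in the reverse inequality $\inf_{f}\alphar_{X^{f,m}}(t_0)\leq\alphar_{X,\Ccal}(U_0)$, and it is more serious than the ``interpolation inside $\Acal$'' issue you flag. First, your reduction splits a general ordered pair through $U\cap U_0$ and $U_0\cup V$; but an indexing collection is closed under intersections, not unions, so $U_0\cup V\notin\Acal$ in general and that half of the splitting is unavailable. Second, and more fundamentally, the range of an elementary flow in $S^e(\Acal,U_0)$ is a totally ordered chain containing $U_0$, so every set it visits is comparable for inclusion to $U_0$; after splitting through $U\cap U_0$ you are left with residual pairs such as $(V\cap U_0,\,V)$ in which $V$ need not be comparable to $U_0$ (already for rectangles of $[0,1]^2$, take $U_0=[0,(1/2,1/2)]$ and $V=[0,(0.55,0.45)]$), and no flow through $U_0$ can visit such a $V$. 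Third, even for pairs individually comparable to $U_0$, the $\limsup$ in Corollary \ref{cor:expC} is realized along a sequence of pairs $U_k\subset V_k$ with $\rho_k\to 0$ that need not be mutually comparable, so they cannot all lie on the range of one flow; your sketch does not say how a single $f$ (or a countable family over which one takes the infimum) witnesses the whole sequence. To be fair, the paper's own proof is a two-line citation that is silent on all three points, so you have not done worse than the source; but as written your argument does not close the $\leq$ direction.
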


\begin{proof}
The proof is only given for the pointwise H\"older exponent. The case of the local H\"older exponent is totally similar.\\
From Proposition \ref{propComparaisonAlphas}, the inequality $\alphar_{X,\Ccal}(U_0) \geq \alphar_X(U_0)$ for all $\omega\in\Omega$ is already known.

\noindent The equality $\displaystyle\inf_{f\in S^e(\Acal,U_0)} \alphar_{X^{f,m}}(t_0) = \alphar_{X,\Ccal}(U_0)$ follows from Corollary~\ref{cor:expC} and Lemma~\ref{lem:flow}.
\end{proof}

The natural question is then to wonder if the previous inequality could be improved in an equality. The answer is generally no, as simple (deterministic) counter-examples can be built.


\section{Almost sure values for the H\"older exponents}\label{sec:as-exp}

As in the real-parameter case, we prove that the random H\"older exponents of the sample paths have almost sure values when the process is Gaussian: these values are determined in Theorems \ref{th_almostsure}  and \ref{th_unifalmostsure}.

\subsection{Uniform results for Gaussian processes}
Recall that according to Remark \ref{rem:hypGauss}, Condition (\ref{eq:assump2}) can be removed from Assumption (\HA) when the process $X$ is Gaussian and therefore in all this section.

\begin{theorem}\label{th_almostsure}
Let $X=\left\{X_U ;\; U\in\mathcal{A}\right\}$ a set-indexed centered Gaussian process, where $(\Acal_n)_{n\in\N}$ and $\dA$ satisfy Assumption (\HA).
If the deterministic local H\"older exponent of $X$ at $U_0\in\mathcal{A}$ is positive and finite, we have
\begin{align*}
\PP\big( \widetilde{\alphar}_X(U_0)=\widetilde{\mathbb{\bbalpha}}_X(U_0) \big)=1\ ,
\end{align*}
and
\begin{align*}
\PP\big( \alphar_X(U_0)=\mathbb{\bbalpha}_X(U_0) \big)=1.
\end{align*}
\end{theorem}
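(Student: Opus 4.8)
The plan is to establish the two equalities separately, in each case proving both inequalities. The inequality $\widetilde{\alphar}_X(U_0) \leq \widetilde{\mathbb{\bbalpha}}_X(U_0)$ almost surely (and similarly $\alphar_X \leq \mathbb{\bbalpha}_X$) is the easy direction: I would argue by contradiction. If, with positive probability, $\widetilde{\alphar}_X(U_0) > \sigma > \widetilde{\mathbb{\bbalpha}}_X(U_0)$, then on a set of positive probability the ratio $|X_U - X_V| / \dA(U,V)^\sigma$ is bounded on a small ball $B(U_0,\rho)$; but then, choosing $U,V \in B(U_0,\rho)$ along a sequence realizing the growth $\EE[(X_U-X_V)^2]/\dA(U,V)^{2\sigma} \to \infty$ forced by the definition of $\widetilde{\mathbb{\bbalpha}}_X$, one gets a contradiction with a standard Gaussian lower-bound estimate: for a centered Gaussian $Y$, $\PP(|Y| \geq \lambda \sqrt{\EE[Y^2]}) $ is bounded below by a positive constant for fixed $\lambda$, so $|X_U - X_V|$ cannot stay small relative to $\dA(U,V)^\sigma$ with probability one. (In fact the cleanest route is: on any event of positive probability, $\sup$ over countably many pairs of $|X_U-X_V|^2$ controls $\sup \EE[(X_U-X_V)^2]$ up to a constant, by separability and the fact that a bounded family of centered Gaussians has uniformly bounded variances.)

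For the reverse inequality $\widetilde{\alphar}_X(U_0) \geq \widetilde{\mathbb{\bbalpha}}_X(U_0)$ almost surely, I would fix $\gamma < \widetilde{\mathbb{\bbalpha}}_X(U_0)$ and show $\widetilde{\alphar}_X(U_0) \geq \gamma$ a.s.; letting $\gamma$ increase to $\widetilde{\mathbb{\bbalpha}}_X(U_0)$ along a countable sequence finishes. By definition of $\widetilde{\mathbb{\bbalpha}}_X$, there is $\rho_0 > 0$ and $K > 0$ with $\EE[(X_U - X_V)^2] \leq K\, \dA(U,V)^{2\gamma'}$ for all $U,V \in B(U_0,\rho_0)$, where $\gamma < \gamma' < \widetilde{\mathbb{\bbalpha}}_X(U_0)$. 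Now I would apply the Gaussian Kolmogorov criterion (Corollary \ref{GaussKolm}, with Assumption (\HA) restricted to the locally totally bounded piece $B(U_0,\rho_0) \cap \Acal$, and using Remark \ref{rem:hypGauss} so that only (\ref{eq:assump1}) is needed): this gives that the sample paths of $X$ restricted to $B(U_0,\rho_0)$ are a.s. locally $\gamma$-Hölder continuous, hence $\widetilde{\alphar}_X(U_0) \geq \gamma$ a.s. The local exponent equality follows.

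For the pointwise exponent, the inequality $\alphar_X(U_0) \geq \mathbb{\bbalpha}_X(U_0)$ a.s. is obtained similarly but one must pass from the modulus of continuity to the pointwise comparison with $\rho^\gamma$: fixing $\gamma < \gamma' < \mathbb{\bbalpha}_X(U_0)$, the bound $\EE[(X_U - X_V)^2] \leq K \rho^{2\gamma'}$ holds \emph{uniformly} for $U,V \in B(U_0,\rho)$, and a chaining argument over the dyadic-type scales provided by $(\Acal_n)$ inside the shrinking balls — exactly the mechanism of the proof of Theorem \ref{kolmth}, but now with the radius $\rho$ itself as the relevant scale — yields $\sup_{U,V \in B(U_0,\rho)} |X_U - X_V| \leq L \rho^\gamma$ eventually, a.s.; this needs a Borel–Cantelli argument along $\rho = k_n^{-1/\qA}$ using the Gaussian tail. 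The reverse inequality $\alphar_X(U_0) \leq \mathbb{\bbalpha}_X(U_0)$ a.s. is again the contradiction/Gaussian-lower-bound argument as above.

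\textbf{Main obstacle.} The delicate point is the pointwise direction $\alphar_X(U_0) \geq \mathbb{\bbalpha}_X(U_0)$: unlike the local exponent, it is not a direct corollary of the Kolmogorov criterion, because one compares increments to $\rho^\gamma$ rather than to $\dA(U,V)^\gamma$, so a naive use of total boundedness loses a power. One must carefully run the chaining (via the projections $g_n$ and the minimality condition, or its Gaussian simplification $N_n \leq k_n$) on the nested balls $B(U_0, k_n^{-1/\qA})$, keeping track that the number of pairs at scale $n$ contributes only a polynomial factor $k_n^{c}$ which is killed by the Gaussian tail $\exp(-c' k_n^{2(\gamma'-\gamma)/\qA})$; checking that Assumption (\HA) still supplies what is needed locally around $U_0$ (rather than globally on $\Acal$) is the technical heart of the argument.
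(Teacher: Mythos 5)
Your proposal is correct and follows essentially the same route as the paper: the local lower bound is read off from Corollary \ref{GaussKolm}, the pointwise lower bound is obtained by re-running the chaining of Theorem \ref{kolmth} at the scales $\rho=k_n^{-1/\qA}$ with the Gaussian tail bound (exactly the paper's Section \ref{sec:lowerHolder}), and the upper bounds come from sequences of increments whose normalized variances blow up, which is the content of the paper's Lemmas \ref{lem_uppoint} and \ref{lem_upbound} (phrased there via convergence in probability of the reciprocal ratio rather than your contradiction, but the mechanism is identical). You also correctly isolate the pointwise lower bound as the step that does not follow from the Kolmogorov criterion alone.
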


\

In a similar way to Theorem 3.14 of  \cite{ehjlv}, we can also obtain almost sure results on the exponents $\alphar_X(U_0)$ and $\widetilde{\alphar}_X(U_0)$ uniformly in $U_0\in\mathcal{A}$.

\begin{theorem} \label{th_unifalmostsure}
Let $X=\left\{X_U ;\; U\in\mathcal{A}\right\}$ be a set-indexed centered 
Gaussian process, where $(\Acal_n)_{n\in\N}$ and $\dA$ satisfy Assumption (\HA). \\
Suppose that the functions $U_0 \mapsto \liminf_{U\rightarrow U_0} \tilde{\mathbb{\bbalpha}}_X(U)$ and $U_0 \mapsto \liminf_{U\rightarrow U_0} \mathbb{\bbalpha}_X(U)$ are positive over $\Acal$. Then, with probability one,
\begin{equation}
\forall U_0 \in \Acal, \quad
\liminf_{U\rightarrow U_0} \widetilde{\mathbb{\bbalpha}}_X(U) \leq \widetilde{\alphar}_X(U_0) \leq \limsup_{U\rightarrow U_0} \widetilde{\mathbb{\bbalpha}}_X(U)
\end{equation}
and
\begin{equation}
\forall U_0 \in \Acal, \quad
\liminf_{U\rightarrow U_0} \mathbb{\bbalpha}_X(U) \leq \alphar_X(U_0). 
\end{equation}
\end{theorem}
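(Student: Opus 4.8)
The plan is to adapt the argument of Theorem 3.14 in \cite{ehjlv} to the set-indexed setting, using Assumption (\HA) (with condition \eqref{eq:assump2} dropped, as permitted in the Gaussian case by Remark \ref{rem:hypGauss}) to replace the lattice structure of $\R^N$ by the approximating subclasses $(\Acal_n)_{n\in\N}$. The key point is that the \emph{uniform} statement cannot be deduced by simply intersecting the almost sure events from Theorem \ref{th_almostsure} over $U_0\in\Acal$, since that union of null sets is uncountable; instead we must build a single null set. First I would prove the upper bound $\widetilde{\alphar}_X(U_0)\leq \limsup_{U\to U_0}\widetilde{\mathbb{\bbalpha}}_X(U)$ uniformly: fix any $\sigma_0$ below the value we wish to exclude and any $U_0$ with $\widetilde{\mathbb{\bbalpha}}_X(U)<\sigma_0$ for a sequence $U\to U_0$; then one finds, for each such $U$, pairs $V,W$ arbitrarily close to $U$ (hence eventually in every ball $B_{\dA}(U_0,\rho)$) with $\EE[(X_V-X_W)^2]\geq c\,\dA(V,W)^{2\sigma_0}$, and a Borel--Cantelli argument along the discretization scales $k_n^{-1/\qA}$ — exactly as in the proof of Theorem \ref{kolmth}, but using the \emph{lower} bound on the variance together with a Gaussian small-ball (anti-concentration) estimate $\PP(|X_V-X_W|\leq \varepsilon)\lesssim \varepsilon/\sqrt{\EE[(X_V-X_W)^2]}$ — shows that on a single full-measure event the increments cannot be uniformly smaller than $\dA(V,W)^{\sigma_0}$ near $U_0$. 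Passing $\sigma_0$ through a countable dense set gives the uniform upper bound.

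For the lower bounds $\liminf_{U\to U_0}\widetilde{\mathbb{\bbalpha}}_X(U)\leq \widetilde{\alphar}_X(U_0)$ and $\liminf_{U\to U_0}\mathbb{\bbalpha}_X(U)\leq \alphar_X(U_0)$, I would fix $\gamma$ strictly below the (assumed positive) value of the relevant $\liminf$ at a given $U_0$; by lower semicontinuity of $U_0\mapsto \liminf_{U\to U_0}\mathbb{\bbalpha}_X(U)$ (and similarly for $\widetilde{\mathbb{\bbalpha}}$), there is a neighbourhood $B_{\dA}(U_0,\rho_0)$ on which $\mathbb{\bbalpha}_X(U)>\gamma$, hence $\EE[(X_U-X_V)^2]\leq C\,\rho^{2\gamma'}$ uniformly for $U,V\in B_{\dA}(U_0,\rho)$ with $\gamma<\gamma'<\liminf$, for $\rho$ small. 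Applying the Gaussian moment bound $\EE[|X_U-X_V|^{2p}]=M_p(\EE[(X_U-X_V)^2])^p$ and the localized Kolmogorov-type chaining of Theorem \ref{kolmth} (summing over the scales inside the fixed ball, with the Borel--Cantelli sets built from $\sum_n N_n k_n^{-\delta}$-type series, or the cruder $N_n\le k_n$ bound of Remark \ref{rem:hypGauss}) yields $|X_U-X_V|\leq L\,\dA(U,V)^{\gamma}$ for $U,V$ close to $U_0$, i.e. $\widetilde{\alphar}_X(U_0)\geq\gamma$; and a fortiori $\alphar_X(U_0)\geq\gamma$ (using $\dA(U,V)\leq 2\rho$). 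To make this hold simultaneously for all $U_0$, cover $\Acal$ by a countable family of balls (using total boundedness of $(\Acal,\dA)$, guaranteed by (\HA)) and take the intersection of the resulting countably many full-measure events; letting $\gamma$ run through a countable set dense from below in the $\liminf$ completes the argument.

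The main obstacle I anticipate is the uniform upper bound. The lower bounds are essentially a localized re-run of Theorem \ref{kolmth} combined with a countable covering, so they are routine once one checks that the semicontinuity of the deterministic exponents lets the neighbourhoods and the constants be chosen measurably and consistently. By contrast, the upper bound requires a two-sided control: one needs, near \emph{every} $U_0$ with small deterministic exponent, a genuinely oscillating pair of sets whose increment variance is \emph{bounded below}, and then an anti-concentration estimate robust enough to survive the union over the (discretized, hence countable) families of such pairs across all scales. Getting the quantifiers right — "for a.e.\ $\omega$, for all $U_0$, along a sequence $U\to U_0$" rather than "for all $U_0$, for a.e.\ $\omega$" — is the delicate part, and it is exactly where Assumption (\HA), which forces $\dA(U,g_n(U))\asymp k_n^{-1/\qA}$ and hence makes the relevant index families countable and their cardinalities controlled, is indispensable; this is the set-indexed analogue of the dyadic-grid bookkeeping in \cite{ehjlv}.
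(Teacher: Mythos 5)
Your lower-bound argument (localize the deterministic exponent on a ball via semicontinuity, run the Gaussian moment trick and the chaining of Theorem \ref{kolmth} inside that ball, then cover $\Acal$ by countably many balls and intersect the resulting full-measure events) is essentially the paper's proof of the two lower bounds, and it is sound modulo the standard uniformity step needed to turn the pointwise statement $\widetilde{\mathbb{\bbalpha}}_X(U)>\gamma$ on a ball $B$ into a single constant $C$ with $\EE[|X_U-X_V|^2]\leq C\,\dA(U,V)^{2\gamma}$ for all $U,V\in B$.

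The upper bound is where your proposal has a genuine gap. Quantify your union bound: at scale $n$ the discretized family of candidate pairs has cardinality of order $k_n$ (or $k_nN_n$), while the Gaussian anti-concentration estimate, applied to a pair $V,W$ with $\EE[(X_V-X_W)^2]\geq c\,\dA(V,W)^{2\sigma_0}$ and threshold $\dA(V,W)^{\sigma_0+\epsilon}$, only yields a probability of order $\dA(V,W)^{\epsilon}\asymp k_n^{-\epsilon/\qA}$. The resulting series $\sum_n k_n^{1-\epsilon/\qA}$ diverges for every $\epsilon<\qA$ (its terms do not even tend to $0$), so Borel--Cantelli gives nothing uniformly in $U_0$; and without independence of the increments you cannot upgrade the single-pair estimate to an exponentially small bound for the intersection over many pairs --- that upgrade is exactly what the paper performs for the SIfBm in Proposition \ref{prop:minunif_sifbm}, using the independence structure along flows, and it is why the extra Condition (\ref{eqHypFin}) is needed there. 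Note also that if your scheme worked it would equally prove a uniform upper bound for the \emph{pointwise} exponent, which the theorem deliberately does not claim and which the paper asserts is false for general Gaussian processes. The paper's actual route avoids any union bound: it first proves the non-uniform upper bound $\widetilde{\alphar}_X(U)\leq\widetilde{\mathbb{\bbalpha}}_X(U)$ at each $U$ in the countable dense set $\mathcal{D}=\bigcup_n\Acal_n$ via Lemma \ref{lem_upbound} (convergence in probability of the reciprocal normalized increments, then extraction of an a.s.\ convergent subsequence --- no summability required), intersects these countably many full-measure events, and finally extends to an arbitrary $U_0$ by a diagonal extraction: for $U^{(i)}\in\mathcal{D}$ converging to $U_0$, the witnessing pairs $(V^{(i)}_n,W^{(i)}_n)$ accumulating at $U^{(i)}$ are rearranged into pairs accumulating at $U_0$. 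This diagonalization is legitimate precisely because the local exponent normalizes by $\dA(V_n,W_n)^{\alpha}$, a quantity intrinsic to the pair; it fails for the pointwise exponent, whose normalization $\rho^{\alpha}$ degrades under the diagonal passage. You would need to replace your Borel--Cantelli step by this density-plus-diagonalization argument for the upper bound to go through.
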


\noindent The proof of Theorem \ref{th_almostsure} is an adaptation of proofs in \cite{herbin}, but with a conceptual improvement due to the well-suited formulation of Assumption (\HA), and a technical improvement in Section \ref{sec:lowerHolder} that we obtained through Theorem \ref{kolmth}. The proofs are detailed in Appendix \ref{app:2}. The proof of Theorem \ref{th_unifalmostsure} is given in Section \ref{sec:unif}.

\

\subsection{Corollaries for the $\Ccal$-H\"older exponents and the pointwise continuity exponent}

Theorem \ref{th_almostsure} can be transposed to the $\Ccal$-H\"older exponent, and the pointwise continuity exponent. 

\noindent If $X$ is a Gaussian set-indexed process, we define respectively the {\em deterministic pointwise} and {\em local $\Ccal$-H\"older exponents} on one hand, for all integer $l\geq 1$,
\begin{eqnarray*}
&&\mathbb{\bbalpha}_{X,\Ccal^l}(U_0) = \sup \bigg\{\alpha : \limsup_{\rho\rightarrow 0} \sup_{\substack{U\in B_{\dA}(U_0,\rho)\\ V\in \mathcal{B}^l(U_0,\rho)}} 
\frac{\EE[|\Delta X_{U\setminus V}|^2]}{\rho^{2 \alpha}}<\infty \bigg\}, \\
&&\widetilde{\mathbb{\bbalpha}}_{X,\Ccal^l}(U_0) = \sup \bigg\{\alpha : \limsup_{\rho\rightarrow 0} \sup_{\substack{U\in B_{\dA}(U_0,\rho)\\ V\in \mathcal{B}^l(U_0,\rho)}} 
\frac{\EE[|\Delta X_{U\setminus V}|^2]}{\dA(U,V)^{2 \alpha}}<\infty \bigg\}
\end{eqnarray*}
and the {\em deterministic pointwise continuity exponent} on the other hand,
\begin{eqnarray*}
\mathbb{\bbalpha}_X^{pc}(t_0) = \sup \left\{\alpha : \limsup_{n\rightarrow \infty} 
\frac{\EE[|\Delta X_{C_n(t)}|^2]}{m(C_n(t))^{2 \alpha}}<\infty \right\}.
\end{eqnarray*}


Similarly to Proposition \ref{propComparaisonAlphas}, the pointwise and local deterministic exponents do not depend on $l$. Hence they are denoted respectively by $\mathbb{\bbalpha}_{X,\Ccal}(U_0)$ and $\widetilde{\mathbb{\bbalpha}}_{X,\Ccal}(U_0)$.

\begin{corollary}\label{CorDeterministicCExp}
Let $X = \{X_U, \ U\in \Acal\}$ be a centered Gaussian set-indexed process. \\
If the subcollections $(\Acal_n)_{n\in\N}$ satisfy Assumption (\HA) and if the deterministic $\Ccal$-H\"older exponents are finite, then for $U_0 \in \Acal$,
\begin{align*}
\alphar_{X,\Ccal}(U_0) = \mathbb{\bbalpha}_{X,\Ccal}(U_0) \ \textrm{a.s.}
\quad\textrm{and}\quad
\widetilde{\alphar}_{X,\Ccal}(U_0) = \widetilde{\mathbb{\bbalpha}}_{X,\Ccal}(U_0) \ \textrm{a.s.}
\end{align*}
\end{corollary}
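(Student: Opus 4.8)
The plan is to reduce Corollary \ref{CorDeterministicCExp} to the already-established Theorem \ref{th_almostsure} by showing that the $\Ccal$-H\"older exponents of $X$ at $U_0$ coincide, as random variables and deterministic quantities respectively, with the ordinary H\"older exponents of an auxiliary process whose increments are of the form $X_U-X_V$ with $U\subseteq V$. Recall from Corollary \ref{cor:expC} that, under contractivity of $\dA$,
\begin{equation*}
\alphar_{X,\Ccal}(U_0) = \sup\Big\{\alpha: \limsup_{\rho\to 0}\sup_{\substack{U,V\in B_{\dA}(U_0,\rho)\\ U\subset V}}\frac{|X_U-X_V|}{\rho^{\alpha}}<\infty\Big\},
\end{equation*}
and similarly for $\widetilde{\alphar}_{X,\Ccal}$ with $\dA(U,V)^{\alpha}$ in the denominator. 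So the quantities we must control only involve \emph{ordered} pairs $U\subset V$, i.e. genuine two-point increments of the Gaussian field $X$, not the full inclusion-exclusion sums. The deterministic counterparts $\mathbb{\bbalpha}_{X,\Ccal}(U_0)$ and $\widetilde{\mathbb{\bbalpha}}_{X,\Ccal}(U_0)$ admit, by the same argument as in Proposition \ref{propComparaisonAlphas} for $l=1$, analogous expressions with $\EE[|X_U-X_V|^2]$ on ordered pairs.

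First I would argue that the proof of Theorem \ref{th_almostsure} never uses the full supremum over \emph{unordered} pairs $U,V\in B_{\dA}(U_0,\rho)$ in any essential way: both the upper-bound part (a Kolmogorov/chaining argument via Theorem \ref{kolmth}) and the lower-bound part (extracting, along a suitable sequence of pairs realizing the deterministic exponent, a Gaussian lower deviation via the Borel--Cantelli lemma and second-moment estimates) go through verbatim if one restricts throughout to ordered pairs. Concretely, the upper bound for $\alphar_{X,\Ccal}(U_0)$ follows from Theorem \ref{kolmth} applied with the moment estimate $\EE[|X_U-X_V|^{\alpha}]\le K\,\dA(U,V)^{\qA+\beta}$ deduced from $\widetilde{\mathbb{\bbalpha}}_{X,\Ccal}(U_0)$ (Gaussian moments, as in Corollary \ref{GaussKolm}), which in fact controls \emph{all} pairs near $U_0$, hence a fortiori the ordered ones; this gives $\widetilde{\alphar}_{X,\Ccal}(U_0)\ge\widetilde{\mathbb{\bbalpha}}_{X,\Ccal}(U_0)$ and $\alphar_{X,\Ccal}(U_0)\ge\mathbb{\bbalpha}_{X,\Ccal}(U_0)$ a.s. For the reverse inequalities, for any $\sigma>\mathbb{\bbalpha}_{X,\Ccal}(U_0)$ one picks, by definition of the deterministic exponent, a sequence $(U_k\subset V_k)$ with $\dA(U_0,U_k)\vee\dA(U_0,V_k)\to0$ and $\EE[|X_{U_k}-X_{V_k}|^2]\ge c\,\rho_k^{2\sigma}$, then uses the Gaussian small-ball/large-deviation estimate $\PP(|X_{U_k}-X_{V_k}|\le \lambda\,\rho_k^{\sigma})$ bounded away from $1$ (independence can be arranged by passing to a suitably sparse subsequence, or one invokes directly the zero-one argument from \cite{herbin}) to conclude via Borel--Cantelli that $\limsup_k |X_{U_k}-X_{V_k}|/\rho_k^{\sigma}>0$ a.s., whence $\alphar_{X,\Ccal}(U_0)\le\sigma$ a.s.; letting $\sigma\downarrow\mathbb{\bbalpha}_{X,\Ccal}(U_0)$ along a countable sequence finishes it. The local case is identical with $\rho_k^{\sigma}$ replaced by $\dA(U_k,V_k)^{\sigma}$.

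Therefore the cleanest presentation is: (i) invoke Corollary \ref{cor:expC} to rewrite both random $\Ccal$-exponents in terms of ordered increments, and the Proposition \ref{propComparaisonAlphas}-type argument to do the same for the deterministic ones; (ii) observe that Theorem \ref{th_almostsure} and its proof apply mutatis mutandis when the indexing set of pairs is restricted to $\{(U,V): U\subset V,\ U,V\in B_{\dA}(U_0,\rho)\}$, because this restriction only shrinks the suprema in a way that is harmless for the upper bound (the Kolmogorov estimate covers everything) and is exactly the set over which the deterministic exponent is computed for the lower bound; (iii) conclude $\alphar_{X,\Ccal}(U_0)=\mathbb{\bbalpha}_{X,\Ccal}(U_0)$ and $\widetilde{\alphar}_{X,\Ccal}(U_0)=\widetilde{\mathbb{\bbalpha}}_{X,\Ccal}(U_0)$ a.s. For the pointwise continuity exponent one argues similarly but more simply, using Proposition \ref{prop:crit-pc-unif} for the lower bound $\alphar_X^{pc}(t_0)\ge\mathbb{\bbalpha}_X^{pc}(t_0)$ (taking $p=2k$, Gaussian moments, and $q=2k\,\mathbb{\bbalpha}_X^{pc}(t_0)-\epsilon$ so that $(q-1)/p\uparrow\mathbb{\bbalpha}_X^{pc}(t_0)$ as $k\to\infty$, as in the Remark following that proposition), and for the upper bound a Borel--Cantelli argument along the sequence $C_n(t_0)$ using the Gaussian lower tail of $\Delta X_{C_n(t_0)}$, which is a one-dimensional Gaussian variable with variance $\EE[|\Delta X_{C_n(t_0)}|^2]\le C\,m(C_n(t_0))^{2\sigma}$ for $\sigma<\mathbb{\bbalpha}_X^{pc}(t_0)$ only from above and $\ge c\,m(C_n(t_0))^{2\sigma'}$ for $\sigma'>\mathbb{\bbalpha}_X^{pc}(t_0)$ along a subsequence.

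The main obstacle I anticipate is the lower-bound direction: one must produce, almost surely, oscillations of the \emph{exact} order prescribed by the deterministic exponent, and the naive use of Borel--Cantelli requires either independence or a suitable zero-one law. The pairs $(U_k,V_k)$ realizing the deterministic exponent need not be chosen with independent increments, so the argument must either thin the sequence to near-independence — which is possible because on a separable Gaussian field one can always extract a subsequence along which the normalized increments are, say, pairwise $\epsilon_k$-uncorrelated with $\epsilon_k\to0$, and then apply a conditional Borel--Cantelli / Kochen--Stone type argument — or else appeal directly to the zero-one law for the relevant tail event (the event $\{\alphar_{X,\Ccal}(U_0)\le\sigma\}$ is a germ event in the Gaussian field and hence has probability $0$ or $1$, and the expected-second-moment bound forces probability $1$). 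This is exactly the technical point handled in \cite{herbin} and in Appendix \ref{app:2} for the ordinary exponents, so here it is a matter of checking that nothing in that machinery breaks when the pairs are constrained to be ordered; since ordering is preserved under the projections $g_n$ and is compatible with contractivity ($U\subset V\Rightarrow g_n(U)\subset g_n(V)$), no new difficulty arises, and the corollary follows.
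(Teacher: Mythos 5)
Your proposal is correct and follows essentially the same route as the paper: the paper's proof simply reduces to $l=1$ (equivalently, to ordered pairs $V\subseteq U$ via Corollary \ref{cor:expC}) and then observes that the arguments of Sections \ref{sec:lowerHolder} and \ref{sec:upperHolder} for Theorem \ref{th_almostsure} remain valid under that restriction, exactly as you argue. The only remark is that your worry about independence/zero-one laws in the lower-bound direction is unnecessary: the paper's Lemmas \ref{lem_uppoint} and \ref{lem_upbound} only use convergence in probability of the inverted normalized increments to $0$ and extraction of an a.s.\ convergent subsequence, which needs no Borel--Cantelli independence input.
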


\begin{proof}
It suffices to prove the result for $l=1$, which corresponds to $V\subseteq U$ in the definition of the standard H\"older exponent. Thus one can apply the previous proofs (Sections \ref{sec:lowerHolder} and \ref{sec:upperHolder}) which are still valid when restricted to $V\subseteq U$.
\end{proof}


\begin{corollary}\label{CorDeterministicExppc}
Let $X = \{X_U, \ U\in \Acal\}$ be a centered Gaussian set-indexed process. If the deterministic exponent of pointwise continuity is finite, then for $t_0\in \mathcal{T}$,
\begin{equation*}
\alphar_X^{pc}(t_0) = \mathbb{\bbalpha}_X^{pc}(t_0) \quad\textrm{a.s.}
\end{equation*}
Moreover, for any $U_{max} \in \Acal$ such that $m(U_{max})<\infty$,
\begin{equation*}
\PP\left( \forall t\in U_{max}, \ \alphar_X^{pc}(t) \geq \mathbb{\bbalpha}_X^{pc}(t)  \right)=1.
\end{equation*}
\end{corollary}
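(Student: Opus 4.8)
The plan is to mirror the structure of the proof of Theorem \ref{th_almostsure} (whose details are in Appendix \ref{app:2}), specialized to the sequence of increments $\Delta X_{C_n(t_0)}$ rather than to pairs $(X_U,X_V)$ in shrinking balls. Throughout, write $Y_n = \Delta X_{C_n(t_0)}$ and $\sigma_n^2 = \EE[Y_n^2]$, and set $r_n = m(C_n(t_0))$, so that by definition $\mathbb{\bbalpha}_X^{pc}(t_0) = \sup\{\alpha : \limsup_n \sigma_n^2 / r_n^{2\alpha} < \infty\}$, while $\alphar_X^{pc}(t_0) = \sup\{\alpha : \limsup_n |Y_n| / r_n^{\alpha} < \infty\}$ on each sample path. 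Note that $(Y_n)_{n\in\N}$ is a sequence of centered Gaussian variables, and $r_n \downarrow 0$ as $n\to\infty$ because $C_n(t_0)\downarrow\{t_0\}$ and $m$ is Radon; so the problem is genuinely one-dimensional (it concerns a countable family of Gaussian variables indexed by $n$), which makes it simpler than the situation of Theorem \ref{th_almostsure}.

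First I would prove the upper bound $\alphar_X^{pc}(t_0) \leq \mathbb{\bbalpha}_X^{pc}(t_0)$ almost surely. Fix $\alpha > \mathbb{\bbalpha}_X^{pc}(t_0)$; then $\limsup_n \sigma_n^2 / r_n^{2\alpha} = \infty$, so along a subsequence $n_k$ we have $\sigma_{n_k} / r_{n_k}^{\alpha} \to \infty$. Since $Y_{n_k}/\sigma_{n_k}$ is standard Gaussian, $\PP(|Y_{n_k}| \geq \sigma_{n_k}/2) $ is bounded below by a fixed positive constant; but I cannot directly invoke independence. Instead I use the standard Gaussian small-ball/anti-concentration argument used in \cite{herbin} and \cite{ehjlv}: for any $c>0$, $\PP(|Y_{n_k}| \leq c\, r_{n_k}^{\alpha}) = \PP(|Y_{n_k}/\sigma_{n_k}| \leq c\, r_{n_k}^{\alpha}/\sigma_{n_k}) \leq c\, r_{n_k}^{\alpha}/\sigma_{n_k} \to 0$. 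Hence, writing $Z = \limsup_k |Y_{n_k}|/r_{n_k}^{\alpha}$, for every $c>0$ we get $\PP(Z \leq c) \leq \liminf_k \PP(|Y_{n_k}| \leq c\, r_{n_k}^{\alpha}) = 0$ by Fatou, so $Z = \infty$ a.s., which forces $\alphar_X^{pc}(t_0) \leq \alpha$ a.s. Letting $\alpha \downarrow \mathbb{\bbalpha}_X^{pc}(t_0)$ along a countable sequence gives $\alphar_X^{pc}(t_0) \leq \mathbb{\bbalpha}_X^{pc}(t_0)$ a.s.

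Next I would prove the lower bound $\alphar_X^{pc}(t_0) \geq \mathbb{\bbalpha}_X^{pc}(t_0)$ almost surely. Fix $0 < \alpha < \mathbb{\bbalpha}_X^{pc}(t_0)$ and pick $\alpha < q/p$ with $p$ an even integer (this is exactly the Gaussian improvement recorded in the Remark after Proposition \ref{prop:crit-pc-unif}, taking $k$ large); then $\sigma_n^2 \leq K r_n^{2q}$ for some $q>\alpha p$ and all large $n$, and Gaussianity gives $\EE[|Y_n|^p] = M_p \sigma_n^p \leq M_p K^{p/2} r_n^{pq}$. By Tchebyshev, $\PP(|Y_n| \geq r_n^{\alpha}) \leq M_p K^{p/2} r_n^{p(q-\alpha)}$. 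The obstacle here — and the main technical point — is summability of this bound over $n$: unlike in Proposition \ref{prop:crit-pc-unif}, we no longer have a single $U_{\max}$ and a disjointness/finite-measure argument to sum over $C\in\Ccal^l(\Acal_n)$, but here we only sum over a single $n$-indexed sequence, so we just need $\sum_n r_n^{p(q-\alpha)} < \infty$. This holds up to an extraction: since $r_n \to 0$, choose an increasing $\varphi$ with $\sum_n r_{\varphi(n)}^{p(q-\alpha)} < \infty$; Borel--Cantelli then gives $|Y_{\varphi(n)}| < r_{\varphi(n)}^{\alpha}$ eventually, a.s., hence $\limsup_n |Y_{\varphi(n)}|/r_{\varphi(n)}^{\alpha} \leq 1 < \infty$ a.s. But the definition of $\alphar_X^{pc}(t_0)$ uses the full sequence, not a subsequence, so to close the argument one must observe that $\mathbb{\bbalpha}_X^{pc}$ and $\alphar_X^{pc}$ are insensitive to extraction in the relevant direction, or redo the Borel--Cantelli with the full sequence under the (harmless) assumption that $(r_n)$ itself is admissible in the sense of \eqref{admissibility} — which is exactly the normalization already invoked for the sequence $(k_n)$ elsewhere. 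Intersecting over a countable sequence $\alpha_j \uparrow \mathbb{\bbalpha}_X^{pc}(t_0)$ yields $\alphar_X^{pc}(t_0) \geq \mathbb{\bbalpha}_X^{pc}(t_0)$ a.s., and combined with the upper bound this proves the first equality.

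Finally, for the uniform statement over $U_{\max}$: the inequality $\alphar_X^{pc}(t) \geq \mathbb{\bbalpha}_X^{pc}(t)$ for all $t\in U_{\max}$ simultaneously is obtained by running the lower-bound argument with the \emph{uniform} criterion of Proposition \ref{prop:crit-pc-unif} in place of the single-point Borel--Cantelli. Concretely, whenever $\mathbb{\bbalpha}_X^{pc}(t) > \gamma$ uniformly one has a uniform bound $\EE[|\Delta X_{C_n(t)}|^p] \leq K m(C_n(t))^{q}$ with $q > \gamma p$ (this is the hypothesis \eqref{eq:hypoProcess3}); applying Proposition \ref{prop:crit-pc-unif} in the Gaussian regime $\gamma < q/p$ produces a single extraction $\varphi$ and a single a.s. event on which $|\Delta X_{C_{\varphi(n)}(t)}| \leq m(C_{\varphi(n)}(t))^{\gamma}$ for all $t\in U_{\max}$ and all large $n$, whence $\alphar_X^{pc}(t) \geq \gamma$ for all such $t$. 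Taking a countable sequence $\gamma_j \uparrow$ and intersecting the corresponding full-probability events gives $\PP(\forall t\in U_{\max},\ \alphar_X^{pc}(t) \geq \mathbb{\bbalpha}_X^{pc}(t)) = 1$. I expect the only real subtlety to be the bookkeeping around extractions versus the full sequence in the definition of the exponent, which is handled by the admissibility reduction already used in the paper.
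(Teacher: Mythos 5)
Your proposal is correct and takes essentially the same route as the paper: the lower bound and the uniform statement come from the Tchebyshev/Borel--Cantelli mechanism of Proposition \ref{prop:crit-pc-unif} (with the Gaussian improvement of the admissible range of $\gamma$), and the upper bound is the same Gaussian anti-concentration argument along a subsequence where $\sigma_{n_k}/r_{n_k}^{\alpha}\to\infty$, which you phrase via Fatou's lemma while the paper instead shows the reciprocal ratio tends to $0$ in probability and extracts an a.s.\ convergent subsequence. The extraction-versus-full-sequence subtlety you flag in the lower bound is equally present in the paper's own appeal to Proposition \ref{prop:crit-pc-unif}, so it is not a gap relative to the published argument.
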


\begin{proof}
Fix $t_0 \in\mathcal{T}$. Let $\alpha< \mathbb{\bbalpha}_X^{pc}(t_0)$. 
The inequality $\alpha< \alphar_X^{pc}(t_0)$ a.s. is a direct consequence of Proposition \ref{prop:crit-pc-unif}. This gives $\alphar_X^{pc}(t_0) \geq \mathbb{\bbalpha}_X^{pc}(t_0)$ almost surely.


For the converse inequality, denote $\mu = \mathbb{\bbalpha}_X^{pc}(t_0)$. Then for all $\epsilon>0$, there exist a subsequence $\left(C_{\varphi(n)}(t_0)\right)_{n\in\N}$ of $\left(C_n(t_0)\right)_{n\in\N}$ and a constant $c>0$ such that
\begin{equation*}
\forall n \in \N^*, \quad 
\EE\big[|\Delta X_{C_{\varphi(n)}(t_0)}|^2\big] 
\geq c \ m(C_{\varphi(n)}(t_0))^{2\mu + \epsilon}.
\end{equation*}
For all $n\in\N$, the law of the random variable $\displaystyle\frac{\Delta X_{C_{\varphi(n)}(t_0)}}{m(C_{\varphi(n)}(t_0))^{\mu + \epsilon}}$ is $\mathcal{N}(0, \sigma_n^2)$. The previous inequality implies that $\sigma_n \rightarrow \infty$ as $n\rightarrow\infty$. 
Then for all $\lambda>0$, the same computation as in Lemmas \ref{lem_uppoint} and \ref{lem_upbound} leads to
\begin{align*}
\PP\left( \frac{m(C_{\varphi(n)}(t_0))^{\mu + \epsilon}}{\Delta X_{C_{\varphi(n)}(t_0)}}<\lambda \right)
&=\PP\left( \frac{\Delta X_{C_{\varphi(n)}(t_0)}}{m(C_{\varphi(n)}(t_0))^{\mu + \epsilon}} > \frac{1}{\lambda} \right)\\
&=\int_{|x|>\frac{1}{\lambda}}\frac{1}{\sqrt{2\pi}\sigma_n}
\exp\left(-\frac{x^2}{2\sigma_n^2}\right).dx\\
&=\frac{1}{2\pi}\int_{|x|>\frac{1}{\lambda\sigma_n}}
\exp\left(-\frac{x^2}{2}\right).dx\stackrel{n\rightarrow +\infty}
{\longrightarrow} 1.
\end{align*}
Therefore the sequence 
$\displaystyle\left(\frac{m(C_{\varphi(n)}(t_0))^{\mu + \epsilon}}{\Delta X_{C_{\varphi(n)}(t_0)}}\right)_{n\in\N}$
converges to $0$ in probability. As a consequence, there exists a subsequence which converges
to $0$ almost surely.
Then for all $\epsilon>0$, we have almost surely $\alphar_X^{pc}(t_0)\leq\mu+\epsilon$. Taking $\epsilon\in\Q_{+}$, this yields $\alphar_X^{pc}(t_0) \leq \mathbb{\bbalpha}_X^{pc}(t_0)$ a.s.


\noindent The second equation is a direct consequence of Proposition \ref{prop:crit-pc-unif}.
\end{proof}


\section{Application: H\"older regularity of the set-indexed fractional Brownian motion} \label{sec:appl}

The general results proved in Section \ref{sec:as-exp} allow to describe the local behaviour of the recently defined set-indexed extensions of fractional Brownian motion. In fact, the L\'evy fractional Brownian motion and the fractional Brownian sheet can be considered as set-indexed processes, and SI fractional Brownian motions recently appeared as limit processes in functional Central Limit Theorems \cite{Bierme}. 

\subsection{H\"older exponents of the SIfBm}

The local regularity of fractional Brownian motion $B^H=\{B^H_t;\;t\in\R_+\}$ is known to be constant a.s. and given by the self-similarity index $H\in (0,1)$. 
The two classical H\"older exponents satisfy, a.s., 
\[
\forall t\in\R_+,\quad\alphar_{B^H}(t)=\widetilde{\alphar}_{B^H}(t)=H.
\]

In \cite{ehem, ehem2}, a set-indexed extension for fractional Brownian motion has been defined and studied. 
A mean-zero Gaussian process $\sifbm^H = \left\{\sifbm_U^H, U\in \mathcal{A} \right\}$ is called a {\em set-indexed fractional Brownian motion (SIfBm for short)} on $(\mathcal{T},\mathcal{A},m)$ if
\begin{equation}
\forall U,V \in\mathcal{A},\quad
\EE\left[\sifbm_U^H \sifbm_V^H \right] = \frac{1}{2} \left[ m(U)^{2H} + m(V)^{2H} - m(U\bigtriangleup V)^{2H} \right],
\end{equation}
where $H\in (0,1/2]$ is the index of self-similarity of the process.

In \cite{hausdorff}, the deterministic local H\"older exponent and the almost sure value of the local H\"older exponent have been determined for the particular case of an SIfBm indexed by the collection $\{[0,t];\;t\in\R_+^N\}\cup\{\emptyset\}$, called the {\em multiparameter fractional Brownian motion}.
If $X$ denotes the $\R^N_+$-indexed process defined by $X_t=\sifbm^H_{[0,t]}$ for all $t\in\R_+^N$, then for all $t_0\in\R_+^N$, $\widetilde{\mathbb{\bbalpha}}_X(t_0) = H$ and with probability one, for all $t_0\in\R_+^N$, $\widetilde{\alphar}_X(t_0) = H$.

However, the local regularity has not been studied so far, in the general case of an indexing collection which is not reduced to the collection of rectangles of $\R^N_+$.
Theorem~\ref{th_almostsure}, \ref{th_unifalmostsure} and Corollary~\ref{CorDeterministicExppc} provide new results for the sample paths of SIfBm.

In Section \ref{sec:as-exp}, Theorem~\ref{th_unifalmostsure} failed to provide a uniform almost sure upper bound for the pointwise H\"older exponent of a general Gaussian set-indexed process. 
In the specific case of the set-indexed fractional Brownian motion, this result can be improved under the following additional requirement: assume there exists $\eta>0$ such that $\forall U_0 \in \Acal$,
\begin{equation}\label{eqHypFin}
\inf_{\rho > 0} \sup\left\{\frac{d_m(U,g_n(U))}{\rho}; \ n\in \N,\ U, g_n(U) \in B_{d_m}(U_0,\rho) \right\}  \geq \eta \ .
\end{equation}


\begin{theorem}\label{th_hoSifBm}
Let $\sifbm^H$ be a set-indexed fractional Brownian motion on $\left(\mathcal{T},\Acal,m \right)$, $H\in (0,1/2]$. Assume that the subclasses $(\Acal_n)_{n\in\N}$ satisfy Assumption (\HA). \\
Then, the local and pointwise H\"older exponents of $\sifbm^H$ at any $U_0\in\mathcal{A}$, defined with respect to the distance $d_m$ or any equivalent distance, satisfy 
\begin{equation*}
\PP \left( \forall U_0\in \mathcal{A},\ \widetilde{\alphar}_{\sifbm^H}(U_0)=H \right) = 1
\end{equation*}
and if the additional Condition (\ref{eqHypFin}) holds,
\begin{equation*}
\PP \left( \forall U_0\in \mathcal{A},\ \alphar_{\sifbm^H}(U_0) = H \right) = 1.
\end{equation*}
In particular, this holds for the multiparameter fBm since (\ref{eqHypFin}) is true in $\R_+^N$.
\end{theorem}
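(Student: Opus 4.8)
The plan is to establish the theorem in two halves, corresponding to the local exponent $\widetilde{\alphar}_{\sifbm^H}(U_0)$ (always equal to $H$) and the pointwise exponent $\alphar_{\sifbm^H}(U_0)$ (equal to $H$ under Condition~\eqref{eqHypFin}), and then to remark that for the rectangles of $\R_+^N$ these two results coincide since the distance $d_m$ there satisfies~\eqref{eqHypFin}. The entry point is the deterministic exponents: from the covariance structure of the SIfBm one computes, for any $U,V\in\Acal$, $\EE[|\sifbm_U^H - \sifbm_V^H|^2] = m(U\bigtriangleup V)^{2H} = d_m(U,V)^{2H}$. Thus the increment variance is \emph{exactly} a power of $d_m$, which immediately gives $\widetilde{\mathbb{\bbalpha}}_{\sifbm^H}(U_0) = H$ for every $U_0\in\Acal$, and in particular the maps $U_0\mapsto\liminf_{U\to U_0}\widetilde{\mathbb{\bbalpha}}_{\sifbm^H}(U)$ and $U_0\mapsto\liminf_{U\to U_0}\mathbb{\bbalpha}_{\sifbm^H}(U)$ are constant equal to $H$, hence positive. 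The first displayed conclusion then follows directly from Theorem~\ref{th_unifalmostsure}: the $\liminf$ and $\limsup$ squeeze forces $\widetilde{\alphar}_{\sifbm^H}(U_0)=H$ for all $U_0$ simultaneously, almost surely. Invariance under an equivalent distance is clear because $\widetilde{\alphar}$ is unchanged when $d_m$ is replaced by a bi-Lipschitz equivalent metric.

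For the pointwise exponent, Theorem~\ref{th_unifalmostsure} only yields the lower bound $\alphar_{\sifbm^H}(U_0)\geq H$ for all $U_0$ a.s., so the real work is the matching \emph{uniform} upper bound $\alphar_{\sifbm^H}(U_0)\leq H$, which is exactly the part the general Gaussian theory could not deliver. Here I would use Condition~\eqref{eqHypFin}: for each $U_0$ it produces, at arbitrarily small scales $\rho$, a set $U$ and an integer $n$ with $U, g_n(U)\in B_{d_m}(U_0,\rho)$ and $d_m(U,g_n(U))\geq \eta\rho$. The increment $\sifbm_U^H - \sifbm_{g_n(U)}^H$ is then a centered Gaussian with standard deviation $d_m(U,g_n(U))^H\geq (\eta\rho)^H$, i.e.\ of order $\rho^H$, so $|\sifbm_U^H - \sifbm_{g_n(U)}^H|/\rho^{H+\varepsilon}$ cannot stay bounded as $\rho\to 0$. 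The challenge is to make this \emph{uniform in $U_0$}: one needs a single null set outside of which, for \emph{every} $U_0\in\Acal$, such oscillating pairs exist along some sequence $\rho\to 0$. I would build this by a chaining/covering argument over a countable dense family and the nested subclasses $(\Acal_n)$: discretize $\Acal$ by the $g_n$'s, use the admissibility of $(k_n)$ and Gaussian tail bounds to show that, with probability one, on each scale $k_n^{-1/\qA}$ at least one increment between a set in $\Acal_{n+1}$ and its projection $g_n$ exceeds a threshold of order $(k_n^{-1/\qA})^{H+\varepsilon}$ — the Borel--Cantelli argument works in the ``liminf'' direction here (the event occurs infinitely often), in contrast to the ``limsup'' direction used in Theorem~\ref{kolmth}. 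Condition~\eqref{eqHypFin} guarantees that one of these extremal pairs sits inside every ball $B_{d_m}(U_0,\rho)$, transferring the pathwise oscillation bound to every $U_0$ at once.

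The main obstacle, then, is this uniform upper bound: one must control a supremum over the \emph{uncountable} index set $\Acal$, and the usual trick of exchanging supremum and ``for all $U_0$'' fails because the bad event depends on $U_0$. The role of Assumption~(\HA) and especially of~\eqref{eqHypFin} is precisely to reduce the uncountable family of oscillation requirements to a countable family of events indexed by $(n, V)$ with $V\in\Acal_{n+1}$, whose total probability of eventual failure is zero by the admissibility summability $\sum_n k_{n+1}/k_n^{1+\delta}<\infty$ together with the Gaussian estimate $\PP(|\sifbm_V^H - \sifbm_{g_n(V)}^H| \leq \lambda\, d_m(V,g_n(V))^H) \leq \lambda$ for small $\lambda$; one also needs that the number of relevant pairs at scale $n$ does not grow faster than a power of $k_n$ (guaranteed by the minimality condition (H2), which is available here in its original form since one is not in the situation of Remark~\ref{rem:hypGauss}). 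Once both bounds are in hand, $\alphar_{\sifbm^H}(U_0)=H$ holds for all $U_0$ off a single null set, and combining with the local-exponent statement gives the final displayed equality for the multiparameter fractional Brownian motion, where $d_m$ on the rectangles of $\R_+^N$ is equivalent on compacts to the Euclidean distance and plainly satisfies~\eqref{eqHypFin} with $\eta$ depending only on $N$.
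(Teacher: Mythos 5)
The first half of your argument (the local exponent and the reduction of the pointwise statement to a uniform upper bound via Theorem \ref{th_unifalmostsure}, plus the verification of Condition (\ref{eqHypFin}) for rectangles) matches the paper and is fine. The gap is in the uniform upper bound $\alphar_{\sifbm^H}(U_0)\leq H$. Your Borel--Cantelli argument rests on the single-increment anti-concentration estimate $\PP\bigl(|\sifbm^H_V-\sifbm^H_{g_n(V)}|\leq \lambda\, d_m(V,g_n(V))^H\bigr)\leq C\lambda$. With the threshold $\rho_{n,V}^{H+\epsilon}$ and $\rho_{n,V}\approx k_n^{-1/\qA}$, each failure event has probability of order $k_n^{-\epsilon/\qA}$, and the union bound over the $k_{n+1}$ sets $V\in\Acal_{n+1}$ gives $\sum_n k_{n+1}\,k_n^{-\epsilon/\qA}$, which diverges for small $\epsilon$: admissibility only controls $\sum_n k_{n+1}/k_n^{1+\delta}$, and you need the exponent on $k_n$ to exceed $1$, whereas here it is $\epsilon/\qA\ll 1$. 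Note also that you genuinely need the oscillation for \emph{every} relevant pair (not ``at least one''), since Condition (\ref{eqHypFin}) hands you a \emph{specific} pair inside each ball $B_{d_m}(U_0,\rho)$; so the union bound cannot be avoided. The minimality condition (H2) does not rescue this either --- it bounds the number of neighbours, not the anti-concentration probability.

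The paper closes exactly this hole by replacing the single-increment event with a small-ball event along an elementary flow: it subdivides the range $\mathcal{R}_n(f,U)$ from $U$ to $g_n(U)$ into $p_{n,U}\approx\rho_{n,U}^{-\epsilon}$ consecutive increments, projects $\sifbm^H$ onto the flow to obtain a standard fBm, and invokes the conditional small-ball estimate of Lemma \ref{lemma_fBm} to get
\[
\PP\Bigl(\,\sup_{V,W\in\mathcal{R}_n(f,U)}|\sifbm^H_V-\sifbm^H_W|<\rho_{n,U}^{H+\epsilon}\Bigr)\leq \bigl(C\,\rho_{n,U}^{\epsilon(1-H)}\bigr)^{\rho_{n,U}^{-\epsilon}-1},
\]
which is superpolynomially small in $k_n^{-1/\qA}$ and therefore survives the union bound over $U\in\Acal_n$ and the sum over $n$. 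This use of the local-nondeterminism-type product bound for the fBm (imported from \cite{BH2} via the flow projection) is the essential idea missing from your proposal; without it, or some substitute giving a per-ball failure probability decaying faster than any power of $k_n$, the chaining you describe cannot be made to converge.
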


\begin{proof}
The following expression of the incremental variance,
\begin{equation*}
\forall U,V\in\mathcal{A},\quad
\EE\left[|\sifbm_U^H -\sifbm_V^H |^2\right] = m(U\bigtriangleup V)^{2H},
\end{equation*}
directly implies that the deterministic pointwise and local H\"older exponents are equal to $H$. By Theorem \ref{th_almostsure}, the random exponents on an indexing collection satisfying Assumption (\HA) are also equal to $H$.

\noindent For the uniform almost sure result on $\Acal$, according to Theorem~\ref{th_unifalmostsure}, it remains to prove that
$\PP \left( \forall U_0\in \mathcal{A},\ \alphar_{\sifbm^H}(U_0) \leq H \right) = 1.$
This is the object of the following Section~\ref{sec:unif-as-pointwise-sifbm}.

For the particular case of the multiparameter fBm, it suffices to notice that the collection $\Acal$ of rectangles of $\R_+^N$ endowed with the Lebesgue measure $\lambda$ satisfies Condition (\ref{eqHypFin}). 
In that case, let us recall that for any $U_0\in \Acal$, $d_{\lambda}(U_0, g_n(U_0))=N.2^{-n}+o(2^{-n})$. Hence for a given $\rho>0$, choosing the smallest integer $n$ such that $N.2^{-n} \leq \rho/2$ ensures that
\begin{equation*}
\frac{d_{\lambda}(U_0,g_n(U_0))}{\rho} \geq \frac{N.2^{-(n+1)}}{\rho}\geq \frac{1}{8},
\end{equation*}
and that $g_n(U_0)\in B_{d_{\lambda}}(U_0,\rho)$.
\end{proof}

If the collection $\Acal$ or the metric $d_m$ do not satisfy the additional requirement (\ref{eqHypFin}), then the lower bound for the pointwise exponent remains true by Theorem~\ref{th_unifalmostsure}:
$\PP \left( \forall U_0\in \mathcal{A},\ \alphar_{\sifbm^H}(U_0) \geq H \right) = 1$.

\

In \cite{ehem}, it is shown that for all $U,V \in \Acal$, $\EE[|\Delta \sifbm^H_{U\setminus V} |^2] = m(U\setminus V)^{2H}$. This implies that for all $U_0 \in \Acal$, $\widetilde{\mathbb{\bbalpha}}_{\sifbm^H,\Ccal}(U_0) = \mathbb{\bbalpha}_{\sifbm^H,\Ccal}(U_0)=H$, and so by Corollary \ref{CorDeterministicCExp}:
\begin{equation*}
\widetilde{\alphar}_{\sifbm^H,\Ccal}(U_0) = \alphar_{\sifbm^H,\Ccal}(U_0)=H \quad\textrm{a.s.}
\end{equation*}


For the pointwise continuity, one needs to determine the behaviour of $\EE\left[|\Delta\sifbm^H_C|^2\right]$ when $C\in\mathcal{C}$ (and not only $C=U\setminus V\in\mathcal{C}_0$, with $U,V\in\Acal$ as previously). In the specific case of an SIfBm with $H=1/2$, we can state:

\begin{proposition}\label{prop:BMpc}
Let $\sifbm$ be a Brownian motion on $\Acal$. Then, for all $t_0\in \mathcal{T}$,
\begin{equation*}
\alphar_{\sifbm}^{pc}(t_0) = \mathbb{\bbalpha}_{\sifbm}^{pc}(t_0) = \frac{1}{2}
\quad\textrm{a.s.}
\end{equation*}
A uniform lower bound in any $U_{max} \in \Acal$ such that $m(U_{max})<\infty$, is given by:
\begin{equation*}
\PP\left( \forall t_0 \in U_{max}, \quad 
\alphar_{\sifbm}^{pc}(t_0) \geq \mathbb{\bbalpha}_{\sifbm}^{pc}(t_0) = \frac{1}{2}
\right) = 1.
\end{equation*}
\end{proposition}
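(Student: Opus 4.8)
The plan is to reduce both assertions to Corollary~\ref{CorDeterministicExppc}, once the deterministic pointwise continuity exponent of $\sifbm$ has been computed explicitly. The only place where the hypothesis $H=1/2$ is genuinely used is the evaluation of the second moment of $\Delta\sifbm_C$ for an \emph{arbitrary} $C\in\Ccal$, and this is the step I would carry out first: I claim that
\begin{equation*}
\EE\left[\Delta\sifbm_{C}\,\Delta\sifbm_{C'}\right]=m(C\cap C'),\qquad\forall\,C,C'\in\Ccal,
\end{equation*}
and in particular $\EE\left[|\Delta\sifbm_C|^2\right]=m(C)$. Starting from the covariance $\EE[\sifbm_U\sifbm_V]=\tfrac12\big(m(U)+m(V)-m(U\bigtriangleup V)\big)$ of a set-indexed Brownian motion, this follows either by expanding the inclusion-exclusion formula~(\ref{eq:inc}) defining $\Delta\sifbm_C$ against the same formula for $\Delta\sifbm_{C'}$, or, more conceptually, from the fact (see \cite{ehem,ehem3}) that $\sifbm$ is the set-indexed L\'evy process whose increments over disjoint elements of $\Ccal$ are independent, i.e. that $C\mapsto\Delta\sifbm_C$ is the $L^2$ Gaussian orthogonal measure with control measure $m$. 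This identity fails for $H\neq 1/2$, where $\EE[|\Delta\sifbm^H_C|^2]$ is no longer a function of $m(C)$ once $C$ is not of the form $U\setminus V$ with $U,V\in\Acal$; this is why the proposition is restricted to the Brownian case.

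Next I would compute $\mathbb{\bbalpha}_{\sifbm}^{pc}(t_0)$. The sets $C_n(t_0)$ form a non-increasing sequence, and, $\sifbm$ being pointwise continuous, $\Delta\sifbm_{C_n(t_0)}$ is a centered Gaussian sequence converging almost surely to $0$, which forces $m(C_n(t_0))=\EE[|\Delta\sifbm_{C_n(t_0)}|^2]\to 0$ as $n\to\infty$. Hence
\begin{equation*}
\frac{\EE\left[|\Delta\sifbm_{C_n(t_0)}|^2\right]}{m(C_n(t_0))^{2\alpha}}=m(C_n(t_0))^{\,1-2\alpha},
\end{equation*}
whose $\limsup_{n\to\infty}$ is finite when $\alpha\le 1/2$ and infinite when $\alpha>1/2$. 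Therefore $\mathbb{\bbalpha}_{\sifbm}^{pc}(t_0)=1/2$ for every $t_0\in\mathcal{T}$; in particular it is positive and finite.

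Both conclusions then follow from the general theory of Section~\ref{sec:as-exp}. Since $\sifbm$ is centered Gaussian and $\mathbb{\bbalpha}_{\sifbm}^{pc}(t_0)=1/2<\infty$, the first part of Corollary~\ref{CorDeterministicExppc} gives $\alphar_{\sifbm}^{pc}(t_0)=\mathbb{\bbalpha}_{\sifbm}^{pc}(t_0)=1/2$ almost surely, for each fixed $t_0$. For the uniform lower bound over a set $U_{max}\in\Acal$ with $m(U_{max})<\infty$, I would use the second part of Corollary~\ref{CorDeterministicExppc}; concretely, the exact Gaussian moment identities $\EE[|\Delta\sifbm_{C_n(t)}|^{2k}]=M_k\,m(C_n(t))^{k}$ verify hypothesis~(\ref{eq:hypoProcess3}) with $p=2k$, $q=k$ and a constant $M_k$ independent of $t$ and $n$, so Proposition~\ref{prop:crit-pc-unif} yields, for each $k\ge 2$ and with probability one, $\alphar_{\sifbm}^{pc}(t)\ge(k-1)/(2k)$ for all $t\in U_{max}$; intersecting these full-probability events over $k\in\N$ gives $\alphar_{\sifbm}^{pc}(t)\ge 1/2=\mathbb{\bbalpha}_{\sifbm}^{pc}(t)$ for all $t\in U_{max}$ simultaneously. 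The only genuinely non-routine step is the covariance computation for $\Delta\sifbm_C$ on the whole class $\Ccal$; once it is in place, everything else is a direct application of the results already proved.
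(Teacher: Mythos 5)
Your proposal is correct and follows essentially the same route as the paper, which simply notes that $\EE\left[|\Delta \sifbm_C|^2\right] = m(C)$ for all $C\in\Ccal$ and invokes Corollary~\ref{CorDeterministicExppc}. The extra details you supply (the white-noise covariance identity on $\Ccal$, the explicit computation $\mathbb{\bbalpha}_{\sifbm}^{pc}(t_0)=1/2$, and the use of higher Gaussian moments with $p=2k$, $q=k$ to satisfy the hypothesis $q>1$ of Proposition~\ref{prop:crit-pc-unif} and push the uniform lower bound up to $1/2$) are all sound and merely flesh out steps the paper leaves implicit.
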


\begin{proof}
Since $\EE\left[|\Delta \sifbm_C|^2 \right] = m(C)$, the result follows from Corollary~\ref{CorDeterministicExppc}.
\end{proof}

This property cannot be extended directly to any SIfBm for which $H<1/2$, since we do not have $\EE\left[|\Delta \sifbm^H_C |^2\right] = m(C)^{2H}$ for all $C\in \Ccal$ (see \cite{ehem}). 
However, the results of the previous Proposition \ref{prop:BMpc} hold for the multiparameter fBm, as we shall see after the following technical lemma.


\begin{lemma}
\label{lemma_C_n}
Let $\Acal = \left\{ [0,t]: t\in [0,1]^N \right\}$ endowed with the usual dissecting class $(\Acal_n)$ made of the dyadics. Let $t\in (0,1)^N$, $t=(t_1,\dots,t_N)$ and define: 
\begin{equation*}
t_j^n = \left\{
    \begin{array}{ll}
        t_j & \textrm{if \ } 2^n t_j\in \N\\
        2^{-n}\lfloor 2^n t_j+1 \rfloor & \mbox{otherwise,}
    \end{array}
\right.
\textrm{ and }\quad
\tilde{t}_k^n = \left\{
    \begin{array}{ll}
        2^{-n}\lfloor 2^n t_k -1\rfloor & \textrm{if \ } 2^n t_k\in \N \\
        2^{-n}\lfloor 2^n t_k \rfloor & \mbox{otherwise.}
    \end{array}
\right.
\end{equation*}

Then, 
\begin{equation*}
C_n(t) = [0,(t_1^n,\dots,t_N^n)]\setminus \bigcup_{k=1}^N [0,(t_1^n,\dots,\tilde{t}_k^n,\dots,t_N^n)] . 
\end{equation*}

\end{lemma}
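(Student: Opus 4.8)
The plan is to identify $C_n(t)$ explicitly as the half-open dyadic cell of level $n$ containing $t$, namely
\[
D := \prod_{k=1}^N \big(\tilde t_k^n,\, t_k^n\big],
\]
and then to verify separately that $D$ coincides with the set displayed in the statement and that $D$ equals the intersection $\bigcap_{C\in\Ccal_n,\, t\in C} C$ defining $C_n(t)$ in (\ref{eq:Jtx}). The identification of $D$ with the displayed set is purely set-theoretic: writing $\mathbf{t}^n=(t_1^n,\dots,t_N^n)$, one has
\[
[0,\mathbf{t}^n] \setminus \bigcup_{k=1}^N [0,(t_1^n,\dots,\tilde t_k^n,\dots,t_N^n)]
= \{x\preccurlyeq \mathbf{t}^n : x_k > \tilde t_k^n \text{ for every } k\} = D,
\]
since membership in $[0,\mathbf{t}^n]$ already forces $x_j\leq t_j^n$ for all $j$. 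The two elementary facts needed throughout are that $\tilde t_k^n = t_k^n - 2^{-n}$ in both branches of the definition, and that $t_k\in(\tilde t_k^n, t_k^n]$, with equality $t_k=t_k^n$ precisely when $2^n t_k\in\N$; the hypothesis $t\in(0,1)^N$ ensures that all the vertices $\mathbf{t}^n$ and $(t_1^n,\dots,\tilde t_k^n,\dots,t_N^n)$ are legitimate (possibly degenerate) elements of $\Acal_n$.

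For $C_n(t)\subseteq D$ it then suffices to observe that $D$ is one of the sets appearing in the intersection in (\ref{eq:Jtx}): indeed $[0,\mathbf{t}^n]\in\Acal_n$, $\bigcup_{k=1}^N[0,(t_1^n,\dots,\tilde t_k^n,\dots,t_N^n)]\in\Acal_n(u)$, whence $D\in\Ccal_n$, and $t\in D$ by the remark above.

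For the reverse inclusion $D\subseteq C_n(t)$, fix an arbitrary $C=U\setminus V\in\Ccal_n$ with $t\in C$, where $U=[0,\mathbf{a}]\in\Acal_n$ and $V=\bigcup_j[0,\mathbf{b}^{(j)}]\in\Acal_n(u)$, with $\mathbf{a}$ and the $\mathbf{b}^{(j)}$ level-$n$ dyadic vertices. Since $t\preccurlyeq\mathbf{a}$ and $\mathbf{a}$ is dyadic, the $k$-th coordinate $a_k$ is a level-$n$ dyadic that is $\geq t_k$, hence $\geq t_k^n$; thus $\mathbf{t}^n\preccurlyeq\mathbf{a}$ and $D\subseteq[0,\mathbf{t}^n]\subseteq U$. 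On the other hand, $t\notin V$ means $t\notin[0,\mathbf{b}^{(j)}]$ for every $j$, so for each $j$ there is a coordinate $k_j$ with $t_{k_j}>b^{(j)}_{k_j}$; as $b^{(j)}_{k_j}$ is a level-$n$ dyadic strictly below $t_{k_j}$, it satisfies $b^{(j)}_{k_j}\leq \tilde t_{k_j}^n$. Every $x\in D$ therefore has $x_{k_j}>\tilde t_{k_j}^n\geq b^{(j)}_{k_j}$, so $x\notin[0,\mathbf{b}^{(j)}]$; hence $D\cap V=\emptyset$ and $D\subseteq U\setminus V=C$. Intersecting over all such $C$ gives $D\subseteq C_n(t)$, and the two inclusions yield $C_n(t)=D$, which is exactly the asserted formula.

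The only point requiring real care is the bookkeeping of the dyadic rounding in these comparisons — in particular distinguishing the cases $2^n t_k\in\N$ and $2^n t_k\notin\N$ when placing a level-$n$ dyadic below $t_k$ relative to $\tilde t_k^n$, and when locating $t_k^n$ relative to a dyadic upper bound for $t_k$. Once these inequalities are pinned down, the rest is routine manipulation of products of intervals.
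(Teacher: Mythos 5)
Your proof is correct and follows the same route as the paper, which simply unpacks the definition $C_n(t)=\bigcap_{C\in\Ccal_n,\,t\in C}C$ and asserts that it reduces to the stated formula for rectangles; you supply the full verification (the identification with the half-open dyadic cell $\prod_k(\tilde t_k^n,t_k^n]$ and the two inclusions) that the paper leaves implicit. The dyadic bookkeeping — $\tilde t_k^n=t_k^n-2^{-n}$ in both branches, $t_k^n$ the least level-$n$ dyadic $\geq t_k$, and $\tilde t_k^n$ the greatest level-$n$ dyadic $<t_k$ — is handled correctly in both cases.
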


\begin{proof}
We recall that $C_n(t)$, the left-neighbourhood of $A_t$ in $\Acal_n$, is defined as $\bigcap_{\substack{C\in \Ccal_n \\ t\in C}} C$. In the particular case of the rectangles, it corresponds to the expression given in the lemma.
\end{proof}

As usual, let $\lambda$ be the Lebesgue measure of $\R^N$. A direct consequence of this result is that any Gaussian process $X$ satisfying the assumptions of Corollary \ref{GaussKolm} satisfies, for all $t\in [0,1]^N$ and for all $\omega$,
\begin{equation*}
\widetilde{\alphar}_{X,\Ccal} (A_t) \leq \alphar_{X,\Ccal} (A_t) \leq \alphar_X^{pc}(t), 
\end{equation*}
with respect to the Lebesgue measure $\lambda$ and the distance $d_{\lambda}$.

\begin{proposition}\label{prop:SIfBmpc}
Let $\left\{ \sifbm^H_U: \ U\in \Acal\right\}$ be a SIfBm process, where $\Acal$ refers to the rectangles of $[0,1]^N$. 
Then, the pointwise continuity of $\sifbm^H$ with respect to the Lebesgue measure $\lambda$ of $\R^N$ satisfies 
\begin{equation*}
\forall t_0\in [0,1]^N, \quad \PP\left(\alphar_{\sifbm^H}^{pc}(t_0) = \mathbb{\bbalpha}_{\sifbm^H}^{pc}(t_0)= H \right)=1, 
\end{equation*}
and,
\begin{equation*}
\PP\left(\forall t_0\in [0,1]^N, \quad \alphar_{\sifbm^H}^{pc}(t_0) \geq \mathbb{\bbalpha}_{\sifbm^H}^{pc}(t_0)= H \right) = 1.
\end{equation*}
\end{proposition}

\begin{proof}
For the sake of readability, the proof is written for $N=2$.
Let $t=(t_1,t_2) \in [0,1]^N$. To show there is no difference in the final result, we assume $t_1$ is dyadic and $t_2$ is not. Let $k,l\in \N, k<2^l$ such that $t_1=k.2^{-l}$. Let $n\in \N, n\geq l$.

\noindent First, we notice that, by Lemma \ref{lemma_C_n},
\begin{equation*}
C_n(t) = \left[0,(t_1,2^{-n}\lfloor 2^n t_2+1\rfloor)\right] \setminus \left\{ \left[0,(2^{-n}\lfloor 2^n t_1-1 \rfloor,2^{-n}\lfloor 2^n t_2+1\rfloor)\right] \cup \left[0,(t_1,2^{-n}\lfloor 2^n t_2\rfloor)\right]\right\}.
\end{equation*}
Re-writing this for short $C_n(t) = A_n \setminus \left\{ B_{1,n} \cup B_{2,n}\right\}$, the inclusion-exclusion formula gives
\begin{align*}
\EE\left[| \Delta \sifbm^H_{C_n(t)} |^2\right]  &= m(A_n\setminus B_{1,n})^{2H} + m(A_n\setminus B_{2,n})^{2H} -  m(A_n\setminus (B_{1,n}\cap B_{2,n})^{2H} \\
&\quad - m(B_{1,n}\bigtriangleup B_{2,n})^{2H} +  m(B_{1,n}\setminus B_{2,n})^{2H} + m(B_{2,n}\setminus B_{1,n})^{2H}. 
\end{align*}
A simple estimation of all the previous terms then gives $\mathbb{\bbalpha}_{\sifbm^H}^{pc}(t_0)=H$ for all $t_0\in [0,1]^N$, and the second assertion follows by Corollary \ref{CorDeterministicExppc}.
\end{proof}


\subsection{Proof of the uniform a.s. pointwise exponent of the SIfBm}
\label{sec:unif-as-pointwise-sifbm}

In \cite{BH2}, a similar result is proved for the regular multifractional Brownian motion, with a proof based on the integral representation of this process.

We shall adapt the following technical lemma taken from \cite{BH2}:
\begin{lemma}\label{lemma_fBm}
Let $B^H =\{B^H_t, t\in \R_+\}$ be a fractional Brownian motion of index $H\in (0,1)$. Let $\epsilon>0$, $\rho>0$, $0\leq s<t$, $n\in \N^*$ and $\delta u = \frac{\rho}{n}$. Then, let $u_0 = s$ and for all $k\in \{0,\dots,n\}$, $u_{k+1} = u_k + \delta u$. We have the following:
\begin{equation*}
\PP\left(\bigcap_{k=1}^n \{|B^H_{u_k}-B^H_{u_{k-1}}|<\rho^{H+\epsilon}\}\right) \leq \left(\frac{2}{\sqrt{2\pi}}\right)^n \left(\frac{\rho^{H+\epsilon}}{C.(\delta u)^H}\right)^n \ ,
\end{equation*} 
where $C$ is a constant depending only on $H$.
\end{lemma}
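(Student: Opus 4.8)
## Proof Plan for Lemma \ref{lemma_fBm}

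The plan is to exploit the Gaussian structure of the increments of $B^H$ together with a conditioning argument that peels off one increment at a time. The key observation is that although the increments $B^H_{u_k}-B^H_{u_{k-1}}$ are not independent (fractional Brownian motion does not have independent increments unless $H=1/2$), the \emph{conditional} distribution of $B^H_{u_k}-B^H_{u_{k-1}}$ given the past $\sigma(B^H_{u_0},\dots,B^H_{u_{k-1}})$ is still Gaussian, and its conditional variance is bounded below by a quantity of order $(\delta u)^{2H}$. This lower bound on conditional variances is the engine of the estimate.

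First I would write the probability of the intersection as an iterated conditional expectation:
\begin{equation*}
\PP\left(\bigcap_{k=1}^n \{|B^H_{u_k}-B^H_{u_{k-1}}|<\rho^{H+\epsilon}\}\right) = \EE\left[\prod_{k=1}^{n} \mathbbm{1}_{\{|B^H_{u_k}-B^H_{u_{k-1}}|<\rho^{H+\epsilon}\}}\right],
\end{equation*}
and then condition successively. For each $k$, conditionally on $\mathcal{F}_{k-1}=\sigma(B^H_{u_0},\dots,B^H_{u_{k-1}})$, the variable $B^H_{u_k}-B^H_{u_{k-1}}$ is Gaussian with some (random, $\mathcal{F}_{k-1}$-measurable) mean $\mu_k$ and a deterministic conditional variance $\sigma_k^2 = \operatorname{Var}(B^H_{u_k}-B^H_{u_{k-1}}\mid \mathcal{F}_{k-1})$. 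The elementary bound $\PP(|Z|<a)\leq \frac{2a}{\sqrt{2\pi}\,\sigma}$ for a Gaussian $Z$ of variance $\sigma^2$ and arbitrary mean gives
\begin{equation*}
\PP\left(|B^H_{u_k}-B^H_{u_{k-1}}|<\rho^{H+\epsilon}\;\Big|\;\mathcal{F}_{k-1}\right) \leq \frac{2}{\sqrt{2\pi}}\,\frac{\rho^{H+\epsilon}}{\sigma_k}.
\end{equation*}
Iterating the tower property from $k=n$ down to $k=1$ then yields the product $\left(\frac{2}{\sqrt{2\pi}}\right)^n \rho^{n(H+\epsilon)}\prod_{k=1}^n \sigma_k^{-1}$.

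The remaining step — and the one I expect to be the technical heart — is to show that each conditional standard deviation satisfies $\sigma_k \geq C\,(\delta u)^H$ for a constant $C$ depending only on $H$. Since $\delta u = \rho/n$ and the $u_k$ are equally spaced, this is a statement about the local nondeterminism of fractional Brownian motion: the conditional variance of an increment over an interval of length $\delta u$, given the whole past, is comparable to the unconditional variance $(\delta u)^{2H}$. One way is to invoke the well-known strong local nondeterminism property of fBm (Pitt's property), which states precisely that $\operatorname{Var}(B^H_t - B^H_s\mid B^H_r, r\leq s)\geq c\,|t-s|^{2H}$; alternatively, for the equally-spaced grid one can use the spectral/harmonizable representation of $B^H$ to get a clean lower bound. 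Either route reduces to a classical fact, so it may simply be cited; the mild subtlety is checking that the constant $C$ is uniform in $n$ and in the location $s$ of the grid, which follows from stationarity of increments and scaling. Once $\sigma_k\geq C(\delta u)^H$ is in hand, substituting into the product gives exactly the claimed bound
\begin{equation*}
\left(\frac{2}{\sqrt{2\pi}}\right)^n \left(\frac{\rho^{H+\epsilon}}{C\,(\delta u)^H}\right)^n.
\end{equation*}
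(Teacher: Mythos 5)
Your argument is correct and complete in outline. Note that the paper itself does not prove this lemma: it is imported from \cite{BH2}, where it is established for the multifractional Brownian motion via the integral (Volterra) representation, by splitting each increment into an innovation part independent of the past plus a past-measurable part and lower-bounding the variance of the innovation by $C(\delta u)^{2H}$. That is exactly the content of the strong local nondeterminism property you invoke, so your conditioning argument is essentially the same mechanism with the representation black-boxed into Pitt's LND; the tower-property peeling and the density bound $\PP(|Z|<a\mid\mathcal{F}_{k-1})\le \frac{2a}{\sqrt{2\pi}\,\sigma_k}$ (valid for any conditional mean, since the conditional variance of a Gaussian vector is deterministic) are identical in both routes. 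The one point worth making explicit if you write this up is the form of the LND bound, $\operatorname{Var}(B^H_{u_k}\mid B^H_{u_0},\dots,B^H_{u_{k-1}})\ge c_H\min\bigl(\min_{j<k}|u_k-u_j|^{2H},\,u_k^{2H}\bigr)$; since $u_k\ge k\,\delta u\ge\delta u$ on the equally spaced grid, the minimum is $(\delta u)^{2H}$ and the constant is uniform in $n$ and $s$ by self-similarity and stationarity of increments, as you indicate.
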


In the sequel, for $U\subset V \in \Acal$, we denote by $\mathcal{R}(f,U\rightarrow V)$, the range of the elementary flow $f:[0,d]\rightarrow \Acal$ such that $f(0)=U$ and $f(d)=V$, where $d=d_m(U,V)$ (the distance considered here is always $d_m=m(\bullet\bigtriangleup\bullet)$). Hence $\mathcal{R}(f,U\rightarrow V)$ is a totally ordered subset of $\Acal$ which forms a continuum. 
We also denote by $\mathcal{R}_n(f,U)$, the range $\mathcal{R}(f,U\rightarrow g_n(U))$. Since the choice of a particular $f$ does not matter, these notations can be used without specifying $f$, considering that a choice has been made. 

\begin{lemma}\label{lemma_Sifbm}
Let $\sifbm^H$ be a SIfBm on $(\Acal,\mathcal{T},m)$ of index $H\in (0,\frac{1}{2}]$. Let $U\in \Acal$, $i\in \N$ and $\rho_i=d_{m}(U,g_i(U))$. 
Let $\epsilon>0$, $n\in \N^*$. 
In any $\mathcal{R}_i(f,U)$, there exist an increasing sequence $(U_j)_{0\leq j\leq n}$ such that $U_0=U$, $U_n=g_i(U)$, and $\delta U = d_{m}(U_{j-1},U_j) = \frac{\rho_i}{n}$ for all $j\in \{1,\dots,n\}$. 
Then,
\begin{equation}\label{eqSIfBm}
\PP\left( \bigcap_{k=1}^n \left\{|\sifbm^H_{U_k}-\sifbm^H_{U_{k-1}}|<\rho_i^{H+\epsilon} \right\} \right)\leq \left( \frac{2}{\sqrt{2\pi}} \right)^n \left( \frac{\rho_i^{H+\epsilon}}{\sigma} \right)^n \leq \left(\widetilde{C}\ n^H \rho_i^{\epsilon}\right)^n ,
\end{equation}
where $\sigma = C. (\delta U)^{H}$ and $C,\widetilde{C}>0$ only depends on $H$. 
\end{lemma}

\begin{proof}
Let us consider the range $\mathcal{R}_i(f,U)$ of a flow $f$ connecting $U$ to $g_i(U)$. The standard projection of $X=\sifbm^H$ on $f$ is a standard fractional Brownian motion that we denote $X^{f,m}=\left\{X^{f,m}_t, t\in [0,\rho_i]\right\}$. As usual, $\theta = m\circ f$ and in the present situation, $\theta:[0,\rho_i] \rightarrow [m(U), m(g_i(U))]$. 
For $k\in \{0,\dots,n\}$, let $u_k=m(U)+k.\frac{\rho_i}{n}$ and define $U_k=f\circ \theta^{-1}(u_k)$. The $U_k$'s contitute the sequence of the statement and we remark that
\begin{equation*}
\PP\left(\bigcap_{k=1}^p \left\{|X_{U_k}-X_{U_{k-1}}|<\rho_n^{H+\epsilon} \right\} \right) = \PP\left(\bigcap_{k=1}^p \left\{|X^{f,m}_{u_k}-X^{f,m}_{u_{k-1}}|<\rho_n^{H+\epsilon} \right\} \right).
\end{equation*}
The result follows from Lemma \ref{lemma_fBm}.
\end{proof}

The following Proposition is the last key result to prove the uniform almost sure upper bound for the SIfBm. 

\begin{proposition}\label{prop:minunif_sifbm}
Let $\sifbm^H$ be a SIfBm on $(\Acal,\mathcal{T},m)$ of parameter $H\in (0,1/2]$. 
We assume that $(\Acal_n)_{n\in\N}$, endowed with $d_m$, satisfies Assumption (\HA) and (\ref{eqHypFin}).\\
Then, for all $\epsilon>0$, there exists a finite random variable $h>0$ such that almost surely, for all $\rho \leq h(\omega)$ and for all $U_0\in \Acal$,
\begin{equation*}
\sup_{U,V\in B_{d_{\Acal}}(U_0,\rho)} \left|\sifbm^H_U - \sifbm^H_V\right| \geq \rho^{H+\epsilon}.
\end{equation*}
\end{proposition}

\begin{proof}
Let us fix $\epsilon>0$. For all $U\in \Acal$, let $\rho_{n,U} = d_m(U,g_n(U))$ and $p_{n,U}=\lfloor \rho_{n,U}^{-\epsilon}\rfloor$. 
For all $N\in \N^*$, we consider the event
\begin{align*}
A_N = \bigcup_{n\geq N} \bigcup_{U\in \Acal_n} \left\{ \forall V,W\in \mathcal{R}_n(f,U), |X_V-X_W|<\rho_{n,U}^{H+\epsilon} \right\}.
\end{align*}
We have
\begin{align*}
\PP(A_N)
&\leq \sum_{n\geq N} \sum_{U\in \Acal_n} \PP\left( \forall V,W\in \mathcal{R}_n(f,U), |X_V-X_W|<\rho_{n,U}^{H+\epsilon}\right) \\
&\leq \sum_{n\geq N} \sum_{U\in \Acal_n} \PP\left(\bigcap_{k=1}^{p_{n,U}} \left\{|X_{U_k}-X_{U_{k-1}}|<\rho_{n,U}^{H+\epsilon} \right\} \right), 
\end{align*}
where $U_0, \dots, U_{p_{n,U}}$ are defined as in Lemma \ref{lemma_Sifbm}.

\noindent Following equation (\ref{eqSIfBm}) and since $\rho_{n,U} = \dA(U,g_n(U)) \leq k_n^{-1/\qA}$, there exist positive constants $C_1$ and $C_2$ such that
\begin{align*}
\PP\left(\bigcap_{k=1}^{p_{n,U}} \left\{|X_{U_k}-X_{U_{k-1}}|<\rho_{n,U}^{H+\epsilon} \right\} \right) 
&\leq \left(C_1\ \rho_{n,U}^{\epsilon (1-H)}\right)^{\rho_{n,U}^{-\epsilon}-1} \\
&\leq \left(C_2\ k_n^{-1/\qA}\right)^{\epsilon (1-H) ( k_n^{\epsilon/\qA}-1)}.
\end{align*}

\noindent Going back to the previous equation, we obtain
\begin{align*}
\PP(A_N)
\leq \sum_{n\geq N} k_n \left(C_2\ k_n^{-1/\qA}\right)^{\epsilon (1-H)(k_n^{\epsilon/\qA}-1)} = R_N.
\end{align*}

\noindent Since $k_n$ is admissible, we can easily show that $\sum_{N\in \N^*} R_N <\infty$. Hence, Borel-Cantelli Lemma implies the existence of a random variable $N(\omega)$ such that: with probability one, for all $n\geq N(\omega)$ and for all $U\in \Acal_n$, 
\begin{equation}\label{eqSIunif}
\exists V, W\in \mathcal{R}_n(f,U) ; \quad |X_V-X_W| \geq \rho_{n,U}^{H+\epsilon}.
\end{equation}
For $U_0\in \Acal$ and $\rho>0$, Assumption (\ref{eqHypFin}) gives the existence of $\mathcal{R}_n(f,U)\subset B_{\dA}(U_0,\rho)$, for some $n\geq N(\omega)$ and $U\in\Acal$ such that $\rho_{n,U} \geq \eta \rho$. Then, there exist $V,W\in\Acal$ (the same that in (\ref{eqSIunif})), such that
\begin{equation*}
|X_V-X_W|\geq \rho_{n,U}^{H+\epsilon} \geq (\eta^{H+\epsilon})\ \rho^{H+\epsilon}
\end{equation*}
which concludes the proof.
\end{proof}


\appendixpage
\begin{appendices}

\section{Complements on Examples \ref{ex:non-integer exponent} and \ref{ex:Hfsm}}\label{app:0}

In this appendix, we use directly the notations of the aforementionned examples without recalling them.

\noindent {\bf Example \ref{ex:non-integer exponent}.} We complete this example with an estimate of $J_n$ and $k_n$, and conclude with the computations that permit to obtain the value of $\qA$.
Since on any dyadic cube of size $2^{-n}$, $X$ has increments of order $2^{-n(H-\epsilon)}$ (we then omit $\epsilon$), we deduce that $J_n$ has to be of order $2^{-nH}/2^{-n}$, and then $k_n$ has to be of order $2^{n(N+1-H)}$. We refer to \cite{XiaoHausdorff} for rigorous proofs of these facts.\\
For $\varepsilon>0$ small (compared to $2^{-n}$), an $\varepsilon$-covering provides the following bound:
\begin{align*}
\mu_\varphi&\left(\{(s,X_s(\omega)),\ s\in [0,2^{-n}(k+1)]\setminus [0,2^{-n}k],\ j< 2^n X_s(\omega)\leq j+1\}\right)\\
&\quad \quad \quad\quad \quad \quad \quad \quad \quad \leq \varphi(\varepsilon)\times \#\{\textrm{$\varepsilon$-cubes to cover the previous set}\}\\
& \quad \quad \quad\quad \quad \quad \quad \quad \quad \leq C\ \varphi(\varepsilon)\times \underbrace{N\frac{2^{-n}}{\varepsilon^N}}_{\varepsilon-\text{covering of }[0,2^{-n}(k+1)]\setminus [0,2^{-n}k]}\ \times \underbrace{\frac{\varepsilon^H}{\varepsilon}}_{\#\{\text{$\varepsilon$-intervals for increment of size $\varepsilon^H$}\}} \\
&\quad \quad \quad\quad \quad \quad \quad \quad \quad \leq C\ 2^{-n} =  C\ k_n^{-1/(N+1-H)} \ ,
\end{align*}
for some constant $C>0$, so that we can identify $\qA$ with $N+1-H$.

\noindent {\bf Example \ref{ex:Hfsm}.} In \cite{AR14}, it is explained how to construct a stable measure on an abstract Hilbert space $E$ such that:
\begin{equation*}
\|S\xi\|_{L^2(\mathcal{T},m)}^{\alpha H} = 2 \int_E (1-\cos \langle\xi,x\rangle)\ \Delta^{\alpha H}(dx) \ ,
\end{equation*}
where $\Delta^{\alpha H}$ is the L\'evy measure of an $\alpha H$-stable random measure on $E$ and $S$ is the canonical embedding from $E^*$ to $L^2(\mathcal{T},m)$.\\
If $M^\alpha$ denotes an $\alpha$-stable random measure on $E$ (see \cite{Samorodnitsky} for definition and properties of stable random measures and integrals) with control measure $\Delta^{\alpha H}$, let us define the following process on $E^*$:
\begin{equation*}
\tilde{X}_\xi = \int_E \left(1-e^{i\langle \xi,x\rangle}\right)\ dM^\alpha_x \ .
\end{equation*}
Then, the scale parameter of $\tilde{X}_\xi - \tilde{X}_\eta$ is given by:
\begin{align*}
\|\tilde{X}_\xi - \tilde{X}_\eta\|_\alpha^\alpha \quad &= \int_E \left|1-e^{i\langle \xi,x\rangle} - (1-e^{i\langle \eta,x\rangle})\right|^{\alpha}\ \Delta^{\alpha H}(dx)\\
&= \int_E \left(2-2 \cos\langle \xi-\eta,x\rangle \right)^{\frac{\alpha}{2}}\ \Delta^{\alpha H}(dx) \ .
\end{align*}
It is known (see \cite{AR14} for a list of references) that $\Delta^{\alpha H}$ has a radial decomposition, just as in the real case. Hence, let $\sigma$ be the measure on the unit sphere $\mathcal{S}$ of $E$ such that:
\begin{align*}
\|\tilde{X}_\xi - \tilde{X}_\eta\|_\alpha^\alpha \quad &= \int_0^\infty \int_{\mathcal{S}} \left(2-2 \cos\langle \xi-\eta,r y\rangle \right)^{\frac{\alpha}{2}}\ \frac{dr}{r^{1+\alpha H}}\ \sigma(dy) \ ,
\end{align*}
from which we deduce that the above quantity scales as $\|S(\xi-\eta)\|^{\alpha H}_{L^2(\mathcal{T},m)}$. We do not discuss here how the previous equation extends isometrically from $E^*$ to $L^2(\mathcal{T},m)$, and this finally implies (\cite[p.18]{Samorodnitsky}) that,
for any $\gamma<\alpha$, $f,g\in L^2(\mathcal{T},m)$,
\begin{equation*}
\EE\left(|\tilde{X}_f - \tilde{X}_g|^\gamma\right) \leq C\ \|f-g\|^{\gamma H}_{L^2(\mathcal{T},m)} \ ,
\end{equation*}
for some $C>0$ that depends on $\alpha$ and $\gamma$, but not $f$ and $g$. $X$ is then defined by $X_U = \tilde{X}_{\mathbf{1}_U},\ U\in \Acal$.

\section{Proof of Corollary \ref{corCl}}\label{app:1}

In order to prove Corollary \ref{corCl}, we need the following lemma:
\begin{lemma}\label{lem:Cl}
If the distance $\dA$ on the class $\mathcal{A}$ is contracting, then for any integer $l\geq 1$ and for any $U, V_1,\dots, V_l \in \Acal$,
\begin{equation*}
\max_{i\leq l} \{\dA(U,V_i)\}\leq \rho  \Rightarrow 
\dA(U,V_1\cap \dots \cap V_l)\leq K(l)\ \rho,
\end{equation*}
for some constant $K(l)>0$ which only depends on $l$.
\end{lemma}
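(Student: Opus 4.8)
The plan is to deduce both statements from a single elementary consequence of contractivity, then push the second one through by induction on $l$. The key observation is that contractivity of $\dA$, applied with the common intersecting set chosen to be one of the two arguments, gives
\[
\forall W,V\in\Acal;\qquad \dA(W,\,W\cap V)\;=\;\dA(W\cap W,\,V\cap W)\;\le\;\dA(W,V).
\]
Everything else is a combination of this with the triangle inequality.

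First I would treat the case $l=2$ (which gives the constant $3$). For $U,V_1,V_2\in\Acal$ with $\dA(U,V_1)\vee\dA(U,V_2)\le\rho$, write
\[
\dA(U,\,V_1\cap V_2)\;\le\;\dA(U,V_1)+\dA(V_1,\,V_1\cap V_2)\;\le\;\dA(U,V_1)+\dA(V_1,V_2),
\]
using the triangle inequality and then the observation above. Applying the triangle inequality once more, $\dA(V_1,V_2)\le\dA(V_1,U)+\dA(U,V_2)\le 2\rho$, hence $\dA(U,V_1\cap V_2)\le 3\rho$. This is exactly the same manipulation already used in the proof of Proposition \ref{propComparaisonAlphas}.

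For the general statement I would induct on $l$, setting $W_j=V_1\cap\dots\cap V_j$. The base case $l=1$ is trivial with $K(1)=1$. Assuming $\dA(U,W_{j-1})\le K(j-1)\rho$, the same computation as above with $W_{j-1}$ in the role of $V_1$ and $V_j$ in the role of $V_2$ yields
\[
\dA(U,W_j)\;\le\;\dA(U,W_{j-1})+\dA(W_{j-1},V_j)\;\le\;\dA(U,W_{j-1})+\dA(W_{j-1},U)+\dA(U,V_j)\;\le\;\bigl(2K(j-1)+1\bigr)\rho,
\]
so one may take $K(j)=2K(j-1)+1$, i.e.\ $K(l)=2^{l}-1$, which depends only on $l$.

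There is no real obstacle here: the only point requiring care is to invoke the contractivity inequality with the correct choice of the common intersecting set at each step (namely $W_{j-1}$), so that the nested intersection $W_{j-1}\cap V_j=W_j$ is produced on the intended side; the constant one gets is not optimal, but any constant depending only on $l$ suffices for the statement.
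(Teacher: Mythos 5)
Your proof is correct and follows exactly the route the paper intends: the paper's own proof of this lemma is the one-line remark that it ``relies on the triangular inequality and the contracting property of $\dA$,'' and the chain you write for $l=2$ is literally the one carried out in the proof of Proposition \ref{propComparaisonAlphas}. Your induction with $K(l)=2^l-1$ is a clean way to make the general case explicit, and the key contractivity step $\dA(W,W\cap V)\le\dA(W,V)$ is applied correctly throughout.
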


\begin{proof}[Proof of Lemma \ref{lem:Cl}]
The proof relies on the triangular inequality and the contracting property of $\dA$.
\end{proof}

\begin{proof}[Proof of Corollary \ref{corCl}]
$g_n$ can be extended to $\mathcal{A}(u)$ in the following way:
\begin{equation*}
\forall V_1,\dots,V_p \in \mathcal{A}, \quad 
g_n\bigg(\bigcup_{i=1}^p V_i\bigg) = \bigcup_{i=1}^p g_n(V_i) \ ,
\end{equation*}

\noindent so the following inequality holds:
\begin{align}\label{eq:maj-deltaXC}
|X_U - \Delta X_{\cup V_i}| \leq |X_{g_{n_0}(U)} - \Delta X_{g_{n_0}(\cup V_i)}| +& \sum_{j\geq n_0}|X_{g_{j+1}(U)} - X_{g_j(U)}| \nonumber\\
&+ \sum_{j\geq n_0}|\Delta X_{g_{j+1}(\cup V_i)} - \Delta X_{g_j(\cup V_i)}| \ .
\end{align}

Then by the inclusion-exclusion formula,
\begin{align}\label{eq:delta-gn}
|\Delta X_{g_{n+1}(\cup V_i)} - \Delta X_{g_n(\cup V_i)}| \leq& \sum_{i=1}^p |X_{g_{n+1}(V_i)} - X_{g_n(V_i)}| +\dots  \nonumber\\
&+ \sum_{i_1<\dots<i_k} |X_{g_{n+1}(\cap_{i_1<\dots<i_k} V_i)} - X_{g_n(\cap_{i_1<\dots<i_k} V_i)}| +\dots \nonumber\\
&+ |X_{g_{n+1}(\cap_{i=1}^p V_i)} - X_{g_n(\cap_{i=1}^p V_i)}|.
\end{align}

Now assume that $U,V_1,\dots, V_p \in \mathcal{D}$. When $p\leq l$, the number of terms in the right side of inequality (\ref{eq:delta-gn}) is bounded by a constant, independent of the sets $V_1,\dots,V_p\in\mathcal{A}$.
Thus, there exists a positive constant $K_2(l)$ such that
\begin{equation}\label{eqChain4}
|\Delta X_{g_{n+1}(\cup V_i)} - \Delta X_{g_n(\cup V_i)}| 
\leq K_2(l) \ \sup_{W\in \mathcal{D}} |X_{g_{n+1}(W)} - X_{g_n(W)}| \ .
\end{equation}

\noindent Using the same sequence $(a_j)_{j\in \N}$ as in the proof of Theorem \ref{kolmth}, and the above equation (\ref{eqChain4}) in the third inequality below:
\begin{align*}
\mathbf{P}\bigg(\sup_{V_1,\dots,V_p \in \mathcal{D}}&\sum_{j\geq n_0}|\Delta X_{g_{j+1}(\cup V_i)} - \Delta X_{g_j(\cup V_i)}| \geq K_2(l) k_{n_0+1}^{-\gamma/\qA}\bigg) \\ &\leq \PP\left(\exists V_1,\dots, V_p\in \mathcal{D}, \exists j\geq n_0, \ |\Delta X_{g_{j+1}(\cup V_i)} - \Delta X_{g_j(\cup V_i)}| \geq a_j K_2(l) k_{n_0+1}^{-\gamma/\qA}\right) \\
&\leq \PP\left(\exists W\in \mathcal{D}, \exists j\geq n_0, \ |X_{g_{j+1}(W)} -  X_{g_j(W)}| \geq a_j  k_{n_0+1}^{-\gamma/\qA}\right) \ .
\end{align*}
We obtain the same expression (\ref{eq:proofKolm1}) that we had in the proof of Theorem \ref{kolmth}, thus the same conclusion holds: if $\max_{i\leq l}\left\{m(U\setminus V_i)\right\}\leq k_{n_0}^{-1/\qA}$, then almost surely, ${k_{n_0}^{-1/\qA} \leq h^*}$ implies that:
\begin{align*}
\sup_{V_1,\dots,V_p \in \mathcal{D}}&\sum_{j\geq n_0}|\Delta X_{g_{j+1}(\cup V_i)} - \Delta X_{g_j(\cup V_i)}| \leq K_2(l)\ k_{n_0+1}^{-\gamma/\qA} \ .
\end{align*}

In the same way, the second term of the upper bound (\ref{eq:maj-deltaXC}) is proved to be bounded by $K_4(\gamma,\qA) \ m(C)^{\gamma}$, where $K_4(\gamma,\qA)>0$ only depends on $\gamma$ and $\qA$.\\
The first term of (\ref{eq:maj-deltaXC}) can be bounded by a finite sum (whose number of terms only depends on $l$) of the form $|X_{g_{n_0}(U)} - X_{g_{n_0}(V_{i_1,\dots,i_k})}|$, where $V_{i_1,\dots,i_k} = V_{i_1} \cap \dots \cap V_{i_k}$ for $i_1<\dots<i_k\leq l$:
\begin{equation}\label{eqC}
|X_{g_{n_0}(U)}-\Delta X_{g_{n_0}(\cup V_i)}| \leq \sum_{j=1}^l \sum_{i_1<\dots<i_j} |X_{g_{n_0}(U)}-X_{g_{n_0}(V_{i_1,\dots,i_j})}| .
\end{equation}
Finally, if $\max_{i\leq l}\left\{m(U\setminus V_i)\right\}\leq k_{n_0}^{-1/\qA}$, condition (\ref{eq:assump1}) of Assumption (\HA) and Lemma \ref{lem:Cl} imply 
\begin{align*}
d_m(g_{n_0}(U),g_{n_0}(V_{i_1,\dots,i_j})) 
&\leq d_m(g_{n_0}(U), U) + d_m(U,V_{i_1,\dots,i_j}) + d_m(V_{i_1,\dots,i_j},g_{n_0}(V_{i_1,\dots,i_j}))\\
&\leq K(l)\ \max_{i\leq l}\left\{m(U\setminus V_i)\right\} + 2 k_{n_0}^{-1/\qA} \\
&\leq (K(l)+2) \ k_{n_0}^{-1/\qA}\ . 
\end{align*}
Hence, Theorem \ref{kolmth} implies that when $k_{n_0}^{-1/\qA}<(K(l)+2)^{-1}\ h^*$, each term of equation (\ref{eqC}) is bounded by a quantity proportional to $m(C)^{\gamma}$. Then, the random variable $h^{**}$ of the statement can be chosen to be $(K(l)+2)^{-1}\ h^*$ and the result follows.
\end{proof}

\section{Proof of Theorems \ref{th_almostsure} and \ref{th_unifalmostsure}}\label{app:2}

\subsection{Lower bound for the pointwise and local H\"older exponents}\label{sec:lowerHolder}

A lower bound for the local H\"older exponent is directly given by Corollary \ref{GaussKolm}. \\
For all $U_0\in\mathcal{A}$ and all 
$0<\alpha<\widetilde{\mathbb{\bbalpha}}_X(U_0)$, there exists $\rho_0>0$ and $K>0$ 
such that
\[
\forall U,V\in B_{\dA}(U_0,\rho_0);\quad
\EE\left[|X_U-X_V|^2\right] \leq K\ \dA(U, V)^{2\alpha}.
\]
Therefore, the sample paths of $X$ are almost surely $\nu$-H\"older continuous in $B_{\dA}(U_0,\rho_0)$ for all $\nu\in (0,\alpha)$, which leads to
$\alpha\leq\widetilde{\alphar}_X(U_0)$ almost surely. Then we get
\begin{equation*}
\PP\big( \widetilde{\alphar}_X(U_0)\geq \widetilde{\mathbb{\bbalpha}}_X(U_0) \big)=1.
\end{equation*}


By inequality (\ref{ineqholder}), any lower bound for the local H\"older exponent is also a lower bound for the pointwise exponent. 
Moreover it can be improved in the case of strict inequality
$0<\widetilde{\mathbb{\bbalpha}}_X(U_0) < \mathbb{\bbalpha}_X(U_0)$.

\noindent For any $\epsilon>0$, there exist $0<\rho_1<\rho_0$ and $M>0$ such that
\begin{equation*}
\forall\rho<\rho_1,\ \forall U, V\in B(U_0,\rho);\quad
\EE\left[\left|\frac{X_U-X_V}{\rho^{\mathbb{\bbalpha}_X(U_0)-\epsilon}}\right|^2\right]
\leq M\ \rho^{\epsilon}.
\end{equation*}
Then setting $\gamma=\mathbb{\bbalpha}_X(U_0)-\epsilon$, 
the exponential inequality for the centered Gaussian variable $X_U-X_V$ implies
\begin{equation*}
\PP\big( \left|X_U-X_V\right| \geq \rho^{\gamma} \big)
\leq \exp\left(-\frac{1}{2}\ \frac{\rho^{2\gamma}}{\EE[|X_U-X_V|^2]}\right)
\leq \exp\left( -\frac{1}{2} M \rho^{\epsilon} \right).
\end{equation*}


\noindent We consider the particular case $\rho=k_{n}^{-1/\qA}<\rho_1$ for $n\in\N$ large enough. Using the above estimate in the proof of Theorem \ref{kolmth} still leads to equation (\ref{eq:finChain}), where we had that on $\Omega^*$, for all $n\geq n^*$:
\begin{equation*}
\sup_{\substack{U,V\in \mathcal{D}\\ \dA(U,V)\leq \rho}} |X_U - X_V| \leq 3 \rho^{\gamma} \ .
\end{equation*}

\noindent Hence this inequality gives:
\begin{equation*}
\sup_{U,V\in B(U_0, k_{N}^{-1/\qA})} |X_U - X_V|\leq C\ k_{N}^{-\gamma/\qA} \quad\textrm{a.s.}
\end{equation*}
and since the sequence $\left(k_{n}^{-1/\qA} \right)_{n\in\N}$ is decreasing,
\begin{equation*}
\limsup_{\rho \rightarrow 0} \sup_{U,V\in B(U_0,\rho)} \frac{|X_U - X_V|}{\rho^{\gamma}} < \infty \quad\textrm{a.s.}
\end{equation*}

\noindent Therefore, $\forall \epsilon > 0$, $\alphar_X(U_0)\geq \mathbb{\bbalpha}_X(U_0) - \epsilon$ almost surely and
$\PP\left( \alphar_X(U_0)\geq \mathbb{\bbalpha}_X(U_0) \right)=1$.


\subsection{Upper bounds for the pointwise and local H\"older exponents}\label{sec:upperHolder}

As in \cite{herbin}, upper bounds for the pointwise and local H\"older exponents are given by the following two lemmas. Their proof are totally identical to the multiparameter setting.

\begin{lemma}\label{lem_uppoint}
Let $X=\left\{ X_U;\;U\in\mathcal{A} \right\}$ be a centered Gaussian process.
Assume that for $U_0\in\mathcal{A}$, there exists $\mu\in (0,1)$ such that for all $\epsilon>0$, there exist a sequence $\left(U_n\right)_{n\in\N^*}$ of $\mathcal{A}$ converging to $U_0$, and a constant $c>0$ such that
\begin{equation*}
\forall n\in\N^*;\quad
\EE\left[|X_{U_n}-X_{U_0}|^2\right] \geq c\; \dA(U_n,U_0)^{2\mu+\epsilon}.
\end{equation*}
Then, we have almost surely
\begin{equation*}
\alphar_X(U_0) \leq \mu.
\end{equation*}
\end{lemma}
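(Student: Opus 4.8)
The plan is to show that for every fixed $\epsilon>0$ the inequality $\alphar_X(U_0)\le\mu+\epsilon$ holds almost surely; intersecting these events over $\epsilon\in\{1/k:\,k\in\N^*\}$ then gives $\alphar_X(U_0)\le\mu$ a.s., which is the assertion.

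Fix $\epsilon>0$ and take the sequence $(U_n)_{n\in\N^*}$ and the constant $c>0$ from the hypothesis, writing $\rho_n=\dA(U_n,U_0)$, so that $\rho_n\to 0$. The first step is to choose the right normalization: I would look at the centered Gaussian variable $(X_{U_n}-X_{U_0})/\rho_n^{\mu+\epsilon}$, whose variance is
\[
\sigma_n^2=\rho_n^{-2(\mu+\epsilon)}\,\EE\big[|X_{U_n}-X_{U_0}|^2\big]\ \ge\ c\,\rho_n^{-\epsilon}\ \longrightarrow\ +\infty .
\]
From this, for every $\lambda>0$, $\PP\big(|X_{U_n}-X_{U_0}|/\rho_n^{\mu+\epsilon}<\lambda\big)=\PP\big(|\mathcal N(0,1)|<\lambda/\sigma_n\big)\to 0$, i.e. $\rho_n^{\mu+\epsilon}/(X_{U_n}-X_{U_0})\to 0$ in probability — the same computation that appears in the proof of Corollary~\ref{CorDeterministicExppc}. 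Extracting an almost surely convergent subsequence $(n_j)$, I then obtain $|X_{U_{n_j}}-X_{U_0}|/\rho_{n_j}^{\mu+\epsilon}\to+\infty$ almost surely (the denominator is a.s. nonzero since $X_{U_{n_j}}-X_{U_0}$ is a nondegenerate Gaussian).

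The last step is to feed this into the definition of $\alphar_X(U_0)$: on that almost sure event both $U_{n_j}$ and $U_0$ lie in $B_{\dA}(U_0,2\rho_{n_j})$, so
\[
\sup_{U,V\in B_{\dA}(U_0,2\rho_{n_j})}\frac{|X_U-X_V|}{(2\rho_{n_j})^{\mu+\epsilon}}\ \ge\ 2^{-(\mu+\epsilon)}\,\frac{|X_{U_{n_j}}-X_{U_0}|}{\rho_{n_j}^{\mu+\epsilon}}\ \longrightarrow\ +\infty ,
\]
and since $2\rho_{n_j}\to 0$ this forces $\limsup_{\rho\to0}\sup_{U,V\in B_{\dA}(U_0,\rho)}|X_U-X_V|/\rho^{\mu+\epsilon}=+\infty$ a.s. Because the set of admissible exponents in \eqref{eq:pointholder} is a half-line, $\mu+\epsilon$ is not admissible, which is exactly $\alphar_X(U_0)\le\mu+\epsilon$ a.s.

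I do not expect a genuine obstacle: this is the verbatim set-indexed transcription of the multiparameter lemma in \cite{herbin}, with the metric balls $B_{\dA}(U_0,\cdot)$ replacing Euclidean balls. The only point that needs a word of justification — and the closest thing to a difficulty — is the passage from ``the normalized increment is large with probability tending to $1$ at each individual scale $\rho_n$'' to ``it is large at infinitely many scales simultaneously, almost surely'', which is precisely the standard extraction of an almost surely convergent subsequence from a sequence converging to $0$ in probability (used in the same manner in the proof of Corollary~\ref{CorDeterministicExppc}).
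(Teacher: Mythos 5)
Your argument is correct and is essentially the paper's own: the paper omits the details of Lemma \ref{lem_uppoint} (stating the proof is identical to the multiparameter case of \cite{herbin}), but the computation it displays in the proof of Corollary \ref{CorDeterministicExppc} --- normalize by $\dA(U_n,U_0)^{\mu+\epsilon}$, observe the variance $\sigma_n^2\geq c\,\dA(U_n,U_0)^{-\epsilon}\to\infty$, deduce convergence to $0$ in probability of the reciprocal ratio, extract an a.s.\ convergent subsequence, and take $\epsilon$ over a countable set --- is exactly the one you give. Your concluding step, that $\mu+\epsilon$ fails the $\limsup$ condition in \eqref{eq:pointholder} and that the admissible set is a half-line, is the right way to finish.
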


Since the process $X$ has a finite deterministic H\"older exponent, for $\mu = \mathbb{\bbalpha}_X(U_0)$, one can find a sequence $(U_n)$ as in Lemma \ref{lem_uppoint}. 
Hence $\PP(\alphar_X(U_0) \leq \mathbb{\bbalpha}_X(U_0) ) = 1$.


\begin{lemma} \label{lem_upbound}
Let $X=\left\{ X_U;\;U\in\mathcal{A} \right\}$ be a centered Gaussian process.
Assume that for $U_0\in\mathcal{A}$, there exists $\mu\in (0,1)$ such that for all $\epsilon>0$, there exist two sequences $\left(U_n\right)_{n\in\N^*}$ and $\left(V_n\right)_{n\in\N^*}$ of $\mathcal{A}$ converging to $U_0$, and a constant $c>0$ such that
\begin{equation*}
\forall n\in\N^*;\quad
\EE\left[|X_{U_n}-X_{V_n}|^2\right] \geq c\; \dA(U_n,V_n)^{2\mu+\epsilon}.
\end{equation*}
Then, we have almost surely
\begin{equation*}
\widetilde{\alphar}_X(U_0) \leq \mu.
\end{equation*}
\end{lemma}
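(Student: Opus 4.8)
The plan is to reproduce, for the \emph{local} exponent, the Gaussian variance--blow-up argument used to prove Lemma~\ref{lem_uppoint} for the pointwise exponent, the only change being that the normalization is by $\dA(U_n,V_n)$ instead of $\dA(U_n,U_0)$. The mechanism is: along the prescribed sequences the normalized increments are centered Gaussian variables whose variances tend to $+\infty$, hence (after passing to a subsequence) the \emph{un}-normalized increments, divided by $\dA(U,V)^{\mu+\epsilon}$, are a.s.\ unbounded as we approach $U_0$.

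First I would fix $\epsilon>0$ and take the sequences $(U_n)_{n\geq 1}$, $(V_n)_{n\geq 1}$ in $\mathcal{A}$ converging to $U_0$ and the constant $c>0$ given by the hypothesis. Since $\dA$ is a (pseudo-)metric, the triangle inequality yields $\dA(U_n,V_n)\leq \dA(U_n,U_0)+\dA(U_0,V_n)\to 0$, which is the point where this case genuinely differs from Lemma~\ref{lem_uppoint}: here \emph{both} endpoints move. Discarding the (at most vacuous) indices with $\dA(U_n,V_n)=0$, I set
\[
Y_n=\frac{X_{U_n}-X_{V_n}}{\dA(U_n,V_n)^{\mu+\epsilon}},\qquad
\sigma_n^2=\EE[Y_n^2]=\frac{\EE[|X_{U_n}-X_{V_n}|^2]}{\dA(U_n,V_n)^{2\mu+2\epsilon}}\ \geq\ c\ \dA(U_n,V_n)^{-\epsilon},
\]
so that $\sigma_n\to\infty$. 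Then, for every $\lambda>0$, a direct Gaussian computation (identical to the one displayed in the proof of Corollary~\ref{CorDeterministicExppc}) gives
\[
\PP\!\left(\frac{\dA(U_n,V_n)^{\mu+\epsilon}}{|X_{U_n}-X_{V_n}|}<\lambda\right)
=\PP\!\left(|Y_n|>\tfrac1\lambda\right)
=\int_{|x|>1/(\lambda\sigma_n)}\frac{1}{\sqrt{2\pi}}\,e^{-x^2/2}\,dx\ \xrightarrow[n\to\infty]{}\ 1,
\]
whence $\dA(U_n,V_n)^{\mu+\epsilon}/|X_{U_n}-X_{V_n}|\to 0$ in probability; extracting a subsequence $(n_k)$ along which this holds almost surely, we obtain that $|X_{U_{n_k}}-X_{V_{n_k}}|/\dA(U_{n_k},V_{n_k})^{\mu+\epsilon}\to+\infty$ a.s.

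Finally, because $U_{n_k},V_{n_k}\to U_0$, for every $\rho>0$ all but finitely many pairs $(U_{n_k},V_{n_k})$ lie in $B_{\dA}(U_0,\rho)$, so almost surely $\limsup_{\rho\to 0}\sup_{U,V\in B(U_0,\rho)}|X_U-X_V|/\dA(U,V)^{\mu+\epsilon}=\infty$; since the set of admissible exponents in the definition \eqref{eq:localholder} of $\widetilde{\alphar}_X(U_0)$ is a down-set (if $\alpha$ works then so does every $\alpha'<\alpha$, as $\dA(U,V)^{\alpha'}\geq\dA(U,V)^{\alpha}$ for small distances), this forces $\widetilde{\alphar}_X(U_0)\leq\mu+\epsilon$ a.s. Intersecting the corresponding full-probability events over $\epsilon\in\Q\cap(0,1)$ gives $\widetilde{\alphar}_X(U_0)\leq\mu$ a.s., as claimed. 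There is no real obstacle here; the only point I would write out with some care is the elementary down-set remark turning ``$\limsup=\infty$ at exponent $\mu+\epsilon$'' into ``$\widetilde{\alphar}_X(U_0)\leq\mu+\epsilon$'', which is handled exactly as in the real-parameter setting of \cite{herbin}.
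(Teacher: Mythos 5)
Your proof is correct and is essentially the paper's intended argument: the authors omit the proof of Lemma~\ref{lem_upbound} (deferring to the multiparameter case in \cite{herbin}), but the Gaussian variance blow-up combined with convergence in probability of the reciprocal to $0$ and extraction of an a.s.\ convergent subsequence is exactly the computation they display elsewhere, e.g.\ in the proof of Corollary~\ref{CorDeterministicExppc}. The points you flag (discarding indices with $\dA(U_n,V_n)=0$, the down-set remark, and the intersection over rational $\epsilon$) are handled correctly, so there is nothing to add.
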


As for the pointwise case, $\PP(\widetilde{\alphar}_X(U_0) \leq \widetilde{\mathbb{\bbalpha}}_X(U_0) ) = 1$ follows from Lemma \ref{lem_upbound} with $\mu = \widetilde{\mathbb{\bbalpha}}_X(U_0)$.

\subsection{Proof of the uniform almost sure result}\label{sec:unif}

This section is devoted to the proof of Theorem~\ref{th_unifalmostsure}.
We only consider the local H\"older exponent. The uniform almost sure lower bound for the pointwise exponent is proved in a similar way. 

Starting with the lower bound, from Theorem \ref{kolmth}, for all $U_0 \in \Acal$ and all $\epsilon>0$, there is a modification $Y_{U_0}$ of $X$ which is $\alpha$-H\"older continuous for all $\alpha \in (0,\widetilde{\mathbb{\bbalpha}}_X(U_0)-\epsilon)$ on $B_{\dA}(U_0, \rho_0)$. 

\

In the first step, $\widetilde{\mathbb{\bbalpha}}_X$ is assumed to be constant over $\Acal$. Hence the local H\"older exponent of $Y_{U_0}$ satisfies almost surely
\begin{equation}\label{eq:uniflocholder}
\forall U \in B_{\dA}(U_0, \rho_0), \quad 
\widetilde{\alphar}_{Y_{U_0}}(U) \geq \widetilde{\mathbb{\bbalpha}}_X-\epsilon.
\end{equation}

\noindent The collection $\Acal$ is totally bounded, so it can be covered by a countable number of balls of radius at most $\eta$, for all $\eta>0$. Let $B$ be one of these balls. For all $U_0 \in \Acal$, we consider $\rho_0>0$ such that (\ref{eq:uniflocholder}) holds. We have obviously
\begin{equation*}
B \subseteq \bigcup_{U_0 \in B} B_{\dA}(U_0, \rho_0).
\end{equation*}
For each open ball, there exists an integer $n$ such that $B_{\dA}(U_0, \rho_0) \cap \Acal_n \neq \emptyset$ so that for $V_0\in B_{\dA}(U_0, \rho_0)\cap\mathcal{A}_n$, there exists an integer $m_0$ such that $U_0\in B_{\dA}(V_0, 2^{-m_0}) \subseteq B_{\dA}(U_0, \rho_0)$. Thus $B \subseteq \bigcup B_{\dA}(V_0, 2^{-m_0})$, where the union is countable. 
Each of these balls satisfies
\begin{equation*}
\PP\left( \forall U \in B_{\dA}(V_0, 2^{-m_0}),\ \widetilde{\alphar}_X(U) \geq \widetilde{\mathbb{\bbalpha}}_X-\epsilon \right) = 1,
\end{equation*}
and since $\mathcal{A}$ is a countable union of balls $B_{\dA}(V_0, 2^{-m_0})$, we get
\begin{equation*}
\PP\left( \forall U \in \Acal,\ \widetilde{\alphar}_X(U)\geq \widetilde{\mathbb{\bbalpha}}_X - \epsilon \right) = 1.
\end{equation*}
Taking $\epsilon\in\Q_+^*$, we conclude that
\begin{equation}\label{eq_111}
\PP\left( \forall U \in \Acal,\ \widetilde{\alphar}_X(U)\geq\widetilde{\mathbb{\bbalpha}}_X \right) = 1.
\end{equation}

\

In the general case of a not constant exponent $\widetilde{\mathbb{\bbalpha}}_X$, for any ball $B$ of radius $\eta$ previously introduced, we set $\beta = \inf_{U\in B} \widetilde{\mathbb{\bbalpha}}_X(U) - \epsilon$, $\epsilon>0$. Then, there exists a constant $C>0$ such that
\begin{equation*}
\forall U,V \in B,\quad \EE[|X_U - X_V|^2] \leq C\ \dA(U,V)^{2\beta}.
\end{equation*}
In a similar way as we proved (\ref{eq_111}), we deduce the existence of an event $\Omega^* \subseteq \Omega$ of probability one such that for all $\omega \in \Omega^*$:
\begin{align*}
\forall U \in \mathcal{A},\ &\forall n\geq 0,\ \forall\epsilon\in\Q_+^*, \\
&\forall U_0 \in B_{\dA}(U,2^{-n}),\quad
\widetilde{\alphar}_X(U_0) \geq \inf_{V\in B_{\dA}(U,2^{-n})} \widetilde{\mathbb{\bbalpha}}_X(V) - \epsilon.
\end{align*}
By letting $n\rightarrow\infty$, the previous equation leads to
\begin{equation*}
\PP\left( \forall U_0 \in \Acal,\quad
\widetilde{\alphar}_X(U_0) \geq 
\liminf_{U\rightarrow U_0} \widetilde{\mathbb{\bbalpha}}_X(U) \right) = 1.
\end{equation*}

\

In order to prove the converse inequality (which holds only for the local exponent), we adapt a proof in \cite{ehjlv}. We assume that $\widetilde{\mathbb{\bbalpha}}_X$ is constant on $\mathcal{A}$, the case where it is not being similar to the lower bound. Using the fact that $\mathcal{D} = \bigcup_{n\in\N} \Acal_n$ is countable, Lemma \ref{lem_upbound} gives that $\PP(\forall U \in\mathcal{D},\ \widetilde{\alphar}_X(U)\leq \widetilde{\mathbb{\bbalpha}}_X) = 1$.
Let $\Omega'\in\mathcal{F}$ be the set of $\omega$, such that $\widetilde{\alphar}_X(U)\leq\widetilde{\mathbb{\bbalpha}}_X$ for all $U\in\mathcal{D}$. 
Let $U_0 \in \Acal \setminus \mathcal{D}$. Let $(U^{(i)})_{i\in\N}$ be a sequence in $\mathcal{D}$ converging to $U_0$. 
On $\Omega'$, $\widetilde{\alphar}_X(U^{(i)})\leq \widetilde{\mathbb{\bbalpha}}_X$, for all $i\in\N$. 
For each fixed $i\in\N$, there exist two sequences $(V^{(i)}_n)_{n\in\N}$ and $(W^{(i)}_n)_{n\in\N}$ in $\Acal$ converging to $U^{(i)}$ as $n\rightarrow\infty$, and for all $n\in\N$,
\begin{equation*}
\lim_{n\rightarrow + \infty} \frac{|X_{V^{(i)}_n} - X_{W^{(i)}_n}|}{\dA(V^{(i)}_n,W^{(i)}_n)^{\widetilde{\mathbb{\bbalpha}}_X+\epsilon}} = +\infty.
\end{equation*}

\noindent As in \cite{ehjlv}, we build two other sequences $(V_n)_{n\in\N}$ and $(W_n)_{n\in\N}$ such that $V_n\rightarrow U_0$ and $W_n \rightarrow U_0$, and the following equality concludes the proof:
\begin{equation*}
\lim_{n\rightarrow + \infty} \frac{|X_{V_n} - X_{W_n}|}{\dA(V_n,W_n)^{\widetilde{\mathbb{\bbalpha}}_X+\epsilon}} = +\infty.
\end{equation*}

\end{appendices}

\vspace{0.5cm}



\end{document}